\newcommand\abs[1]{\left\lvert#1\right\rvert}
\newcommand\ceil[1]{\left\lceil#1\right\rceil}
\newcommand\floor[1]{\left\lfloor#1\right\rfloor}
\def\eps{\varepsilon}
\newcommand{\HH}{\mathbb{H}}
\newtheorem{theorem}{Theorem}[section]
\newtheorem{lemma}[theorem]{Lemma}
\newtheorem{proposition}[theorem]{Proposition}
\newtheorem{corollary}[theorem]{Corollary}
\newtheorem{question}[theorem]{Question}
\theoremstyle{definition}
\newtheorem{definition}[theorem]{Definition}
\DeclareMathOperator{\supp}{supp}   
\DeclareMathOperator{\ex}{ex}
\newcommand*{\PP}{\mathbb{P}}
\newcommand{\cF}{\mathcal F}
\newcommand{\bi}{\mathbf{i}}
\title{An improved hypergraph Mantel's Theorem}
\date{}
\author{Daniel I\v{l}kovi\v{c}\thanks{Faculty of Informatics, Masaryk University, Brno, Czech Republic; Mathematics Institute and DIMAP, University of Warwick, Coventry, UK.}\and Jun Yan\thanks{Mathematics Institute, University of Warwick, Coventry, UK. Supported by the Warwick Mathematics Institute Centre for Doctoral Training, and by funding from the UK EPSRC (Grant number: EP/W523793/1).}}
\begin{document}

\maketitle
\begin{abstract}
In a recent paper \cite{CY}, Chao and Yu used an entropy method to show that the Tur\'an density of a certain family $\mathcal{F}$ of $\floor{r/2}$ triangle-like $r$-uniform hypergraphs is $r!/r^r$. Later, Liu \cite{L} determined for large $n$ the exact Tur\'an number $\ex(n,\cF)$ of this family, and showed that the unique extremal graph is the balanced complete $r$-partite $r$-uniform hypergraph. These two results together can be viewed as a hypergraph version of Mantel's Theorem. In this paper, building on their methods, we improve both of these results by showing that they still hold with a subfamily $\cF'\subset\cF$ of size $\ceil{r/e}$ in place of $\cF$.
\end{abstract}

\section{Introduction}

Given a family of $r$-uniform hypergraphs $\mathcal{F}$, an $r$-uniform hypergraph $H$ is said to be $\mathcal{F}$\textit{-free} if $H$ does not contain any $F\in\mathcal{F}$ as a subgraph. The famous Tur\'{a}n problem studies the following two quantities. 
\begin{itemize}
    \item The \textit{Tur\'{a}n number} $\ex(n,\mathcal{F})$, which is the maximum number of edges in an $\mathcal{F}$-free $r$-uniform hypergraph on $n$ vertices. 
    \item The \textit{Tur\'{a}n density} $\pi(\mathcal{F})$, which is defined to be $\lim_{n\to\infty}\ex(n,\mathcal{F})/\binom nr$.
\end{itemize}
One can show using an averaging argument that $\ex(n,\mathcal{F})/\binom nr$ is decreasing in $n$, which implies that the limit above always exists, so $\pi(\mathcal{F})$ is well-defined.  

In the case when $r=2$, or equivalently on graphs, the Tur\'{a}n problem is well-understood. For $\chi\geq 2$, define the \textit{Tur\'{a}n graph} $T(n,\chi)$ to be the balanced complete $\chi$-partite graph on $n$ vertices. The earliest and most basic result in this area is Mantel's Theorem, which states that for the triangle graph $K_3$, $\pi(K_3)=1/2$ and $\ex(n,K_3)=\floor{n^2/4}$, with $T(n,2)$ being the unique extremal graph. Tur\'{a}n's Theorem generalises this to all cliques, asserting that $\pi(K_{\chi+1})=1-1/\chi$ and $\ex(n,K_{\chi+1})=|E(T(n,\chi))|$, with $T(n,\chi)$ being the unique extremal graph. For non-bipartite graphs, the Tur\'{a}n problem is solved asymptotically by the Erd\H{o}s-Stone Theorem, which states that if $\chi(F)=\chi+1\geq 3$, then $\pi(F)=1-1/\chi$ and $\ex(n,F) = \left(1-1/\chi+o(1)\right)\binom{n}{2}$, with $T(n,\chi)$ being an asymptotic extremal graph. For a general bipartite graph $F$, the Erd\H{o}s-Stone theorem shows that $\pi(F)=0$ and $\ex(n,F)=o(n^2)$, but the exact asymptotic behaviour of $\ex(n,F)$ remains an interesting open problem. 

In contrast, when $r\geq 3$, the hypergraph Tur\'an problem is notoriously difficult with very few known results. As an example, the Tur\'an density is still unknown even for $K_4^{(3)}$, the complete $3$-uniform hypergraph on 4 vertices. For a comprehensive overview on known results in hypergraph Tur\'an problems, we refer the readers to Keevash's survey \cite{Kee11}.

For every $r\geq 2$ and every family $\mathcal{F}$ of $r$-uniform hypergraphs, a general result states that $\pi(\mathcal{F})\geq r!/r^r$ if and only if no $F\in\mathcal{F}$ is $r$-partite, and $\pi(F)=0$ otherwise. Noting that $\pi(K_3)=1/2=2!/2^2$, a natural avenue of research is to generalise Mantel's Theorem to $r\geq 3$, by finding a family $\mathcal{F}$ of hypergraph analogues of the triangle that satisfies $\pi(\mathcal{F})=r!/r^r$. This is further motivated by a major open conjecture of Erd\H{o}s that states that for every $r\geq 3$, there exists $\eps_r>r!/r^r$ such that there is no family $\mathcal{F}$ of $r$-uniform hypergraphs with $\pi(\mathcal{F})\in(r!/r^r,\eps_r)$. It is known \cite{FR84} that this conjecture is true if and only if there exists a finite family $\mathcal{F}$ of $r$-uniform hypergraphs with $\pi(\mathcal{F})=r!/r^r$ but the blowup density of every $F\in\mathcal{F}$ is larger than $r!/r^r$. As such, a big obstacle towards Erd\H os' conjecture is potentially the lack of characterisations, or even examples, of families satisfying $\pi(\mathcal{F})=r!/r^r$.

One attempt at generalising Mantel's Theorem to hypergraphs uses the $r$-uniform hypergraph $\mathbb{T}_r$ with vertex set $[2r-1]$ and edge set (see Figure \ref{Tr})
\[\{\{1,2,\ldots,r\},\{1,2,\ldots,r-1,r+1\},\{r,r+1,\ldots,2r-1\}\}.\]
Let $T^r(n)$ be the balanced complete $r$-partite $r$-uniform hypergraph on $n$ vertices. Note that $\mathbb{T}_2=K_3$ and $T^2(n)=T(n,2)$. In \cite{FF83}, Frankl and F\"uredi showed that $\pi(\mathbb{T}_3)=3!/3^3=2/9$, and for large $n$ the unique extremal graph is $T^3(n)$. In \cite{Pik08}, Pikhurko showed that $\pi(\mathbb{T}_4)=4!/4^4=3/32$, and for large $n$ the unique extremal graph is $T^4(n)$. However, Frankl and F\"uredi \cite{FF89} showed that $\pi(\mathbb{T}_5)>5!/5^5$ and $\pi(\mathbb{T}_6)>6!/6^6$, so forbidding $\mathbb{T}_r$ itself does not generalise Mantel's Theorem in general. Determining $\pi(\mathbb{T}_r)$ for $r\geq 7$ remains an open problem. 

Recently, Chao and Yu found the following generalisation of Mantel's Theorem in their breakthrough paper \cite{CY}. For every $1\leq i\leq\floor{r/2}$, define the \textit{$(r-i,i)$-tent} $\Delta_{(r-i,i)}$ to be the $r$-uniform hypergraph with vertex set $[2r-1]$ and edge set (see Figure \ref{tent})
\[\{\{1,2,\ldots,r\},\{1,\ldots,i,r+1,\ldots,2r-i-1,2r-1\},\{i+1,\ldots,r,2r-i,\ldots,2r-1\}\}.\] Note that $\Delta_{(r-1,1)}\cong\mathbb{T}_r$. For $1\leq k\leq\floor{r/2}$, let $\mathcal{F}_{r,k}=\{\Delta_{(r-i,i)}\mid1\leq i\leq k\}$. 

%\adjustimage{width=\textwidth / 3, left, nofloat=figure, vspace=\bigskipamount}{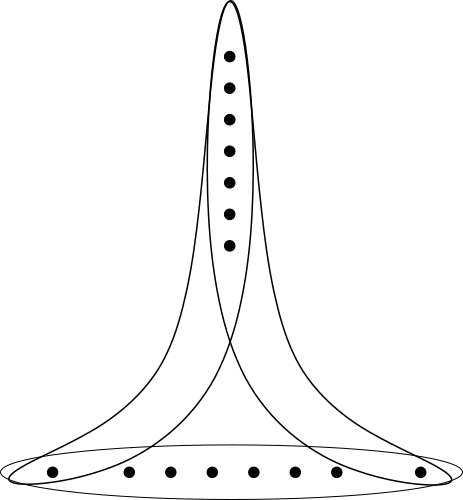}

\begin{figure}%
    \centering
    \subfloat[\centering $\mathbb{T}_r\cong\Delta_{(r-1,1)}$]{{\includegraphics[width=5cm]{triangle.png}}\label{Tr}}%
    \\
    \subfloat[\centering Tent $\Delta_{(5,3)}$]{{\includegraphics[width=5cm]{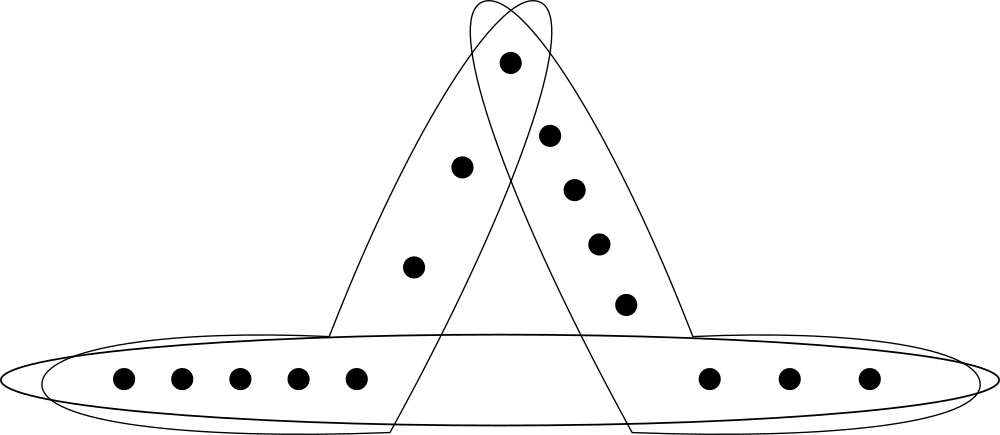}}\label{tent}}%
    \quad
    \subfloat[\centering Partial tent $\Delta^p_{(5,3)}$]{{\includegraphics[width=5cm]{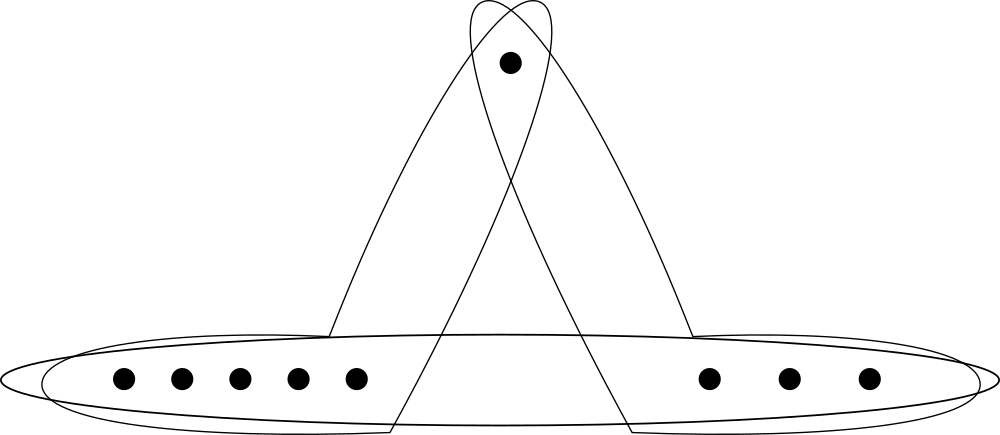}}\label{ptent}}%
    \caption{Examples of triangle-like hypergraphs}
\end{figure}

\begin{theorem}[{\cite[Theorem~1.4]{CY}}]\label{thm:CYmain}
For every $r\geq 2$, $\pi(\mathcal{F}_{r,\floor{r/2}})=r!/r^r$.
\end{theorem}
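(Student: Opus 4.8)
The plan is to prove $\pi(\mathcal{F}_{r,\floor{r/2}})\geq r!/r^r$ and $\pi(\mathcal{F}_{r,\floor{r/2}})\leq r!/r^r$ separately, the second being the substantial half. For the lower bound I would use the construction $T^r(n)$. Each tent $\Delta_{(r-i,i)}$ is not $r$-partite: in any colouring for which the edge $\{1,\dots,r\}$ is rainbow we may assume vertex $j$ gets colour $j$, and then the edge $\{i+1,\dots,r\}\cup\{2r-i,\dots,2r-1\}$ being rainbow forces the colours of $2r-i,\dots,2r-1$ to be precisely $\{1,\dots,i\}$; but then the edge $\{1,\dots,i\}\cup\{r+1,\dots,2r-i-1\}\cup\{2r-1\}$ contains the $i+1$ vertices $1,\dots,i,2r-1$, all of whose colours lie in a set of size $i$, so it is not rainbow. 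Hence $T^r(n)$, being $r$-partite, is $\mathcal{F}_{r,\floor{r/2}}$-free, and as $|E(T^r(n))|=(1+o(1))(n/r)^r=(1+o(1))(r!/r^r)\binom nr$ this yields $\pi(\mathcal{F}_{r,\floor{r/2}})\geq r!/r^r$; note also that $T^r(n)$ is the hypergraph on which the upper bound argument should be tight.

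For the upper bound I would first reformulate $\mathcal{F}_{r,\floor{r/2}}$-freeness combinatorially. Suppose $e,f\in E(H)$ with $|e\cap f|=1$, say $e\cap f=\{x\}$, and let $g\subseteq e\triangle f$ be an edge of $H$. Since $x\notin g$ we have $g\cap e\cap f=\emptyset$ and $|g\cap e|+|g\cap f|=|g|=r$, and since $|g|=r>r-1=|e\setminus\{x\}|=|f\setminus\{x\}|$ the set $g$ meets both $e\setminus\{x\}$ and $f\setminus\{x\}$; writing $i=\min(|g\cap e|,|g\cap f|)$ we get $1\leq i\leq\floor{r/2}$, and $(g,e,f)$ or $(g,f,e)$ has the intersection pattern of $\Delta_{(r-i,i)}$, hence is a copy of it. Conversely, in any copy of $\Delta_{(r-i,i)}$ its last two edges meet in a single vertex and its first edge lies in their symmetric difference. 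Therefore $H$ is $\mathcal{F}_{r,\floor{r/2}}$-free if and only if for every pair $e,f\in E(H)$ with $|e\cap f|=1$ the $(2r-2)$-element set $e\triangle f$ contains no edge of $H$. This is the step that uses all $\floor{r/2}$ tents at once: forbidding only some of them would forbid an edge $g\subseteq e\triangle f$ only for prescribed splits of $(|g\cap e|,|g\cap f|)$.

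With this reformulation I would run an entropy argument on a random edge, in the spirit of the entropy proof of Mantel's theorem. Let $H$ be $\mathcal{F}_{r,\floor{r/2}}$-free on vertex set $[n]$ with $m$ edges, and let $(X_1,\dots,X_r)$ be a uniformly random edge of $H$ together with a uniformly random ordering of its vertices, so that $\HH(X_1,\dots,X_r)=\log(r!\,m)$. By the chain rule this is $\sum_{k=1}^r\HH(X_k\mid X_1,\dots,X_{k-1})$; bounding $\HH(X_1)\leq\log n$ trivially, it suffices to show $\sum_{k=2}^r\HH(X_k\mid X_{<k})\leq\log\frac{(r-1)!\,n^{r-1}}{r^{r-1}}+o(\log n)$, since then $\log(r!\,m)\leq\log\frac{r!\,n^r}{r^r}+o(\log n)$ and so $m\leq(1+o(1))(r!/r^r)\binom nr$. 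For $T^r(n)$ the individual terms are exactly $\log\frac{(r-1)n}{r},\dots,\log\frac nr$, which suggests bounding $\HH(X_k\mid X_{<k})$ by passing to the expected logarithm of the number of vertices $z$ for which $\{X_1,\dots,X_{k-1},z\}$ extends to an edge, applying Jensen's inequality, and then invoking an extremal inequality on $H$ --- the analogue of the inequality $d(u)+d(v)\leq n$ valid for triangle-free graphs, which here should come from the reformulation above (too many extensions of a partial edge would produce two edges meeting in exactly one vertex whose symmetric difference is forced to be edge-free, bounding the number of extensions via a counting/convexity estimate).

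The step I expect to be the main obstacle is exactly this last one: choosing the order in which to reveal the vertices of the random edge (or the right low-entropy variable to condition on) and identifying the precise extremal inequality that $\mathcal{F}_{r,\floor{r/2}}$-freeness supplies, so that Jensen's inequality closes the gap without losing a constant factor and the per-step contributions telescope as above. A secondary technical point is to control the accumulated $o(\log n)$ error uniformly in $n$, so that one genuinely recovers the limiting density $r!/r^r$ and not merely a bound that is off by a factor $n^{o(1)}$.
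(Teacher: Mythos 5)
Your lower bound and the combinatorial reformulation of $\mathcal{F}_{r,\floor{r/2}}$-freeness (as: for every pair $e,f$ of edges with $|e\cap f|=1$, the symmetric difference $e\triangle f$ contains no edge) are both correct, but the upper bound has a genuine gap at exactly the place you flag. For $r=2$ your plan does close: Jensen converts $\HH(X_2\mid X_1)\leq\log\mathbb{E}[d(X_1)]$ into $\log(n/2)$ via the edge-averaged inequality $d(u)+d(v)\leq n$. For $r\geq 3$, however, the per-step targets $\HH(X_k\mid X_{<k})\leq\log\bigl((r-k+1)n/r\bigr)$ are false pointwise, and the averaged versions of them are not consequences of any single link/codegree inequality you exhibit. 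The freeness statement in your reformulation only forbids an edge inside $e\triangle f$ when $|e\cap f|=1$; how this propagates through a chain of $r$ separate Jensen bounds so that the factors telescope to $r!\,n^r/r^r$ is precisely the substance of the theorem and is left entirely open in your sketch.

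The paper, following Chao and Yu, uses a genuinely different mechanism. It does not fix the uniformly random edge: it works with an arbitrary symmetric random edge $(X_1,\ldots,X_r)$ (so that $\sup 2^{\HH(X_1,\ldots,X_r)-r\HH(X_1)}$ recovers the blowup density, Proposition~\ref{prop:entropic-lagrangian}), and, conditioning on suffixes rather than prefixes, records the ratio sequence $x_i=2^{\HH(X_i\mid X_{i+1},\ldots,X_r)-\HH(X_1)}$. Tent-hom-freeness is then exploited not as a local extension bound but via Lemma~\ref{lemma:sample-tree} together with the mixture bound, Lemma~\ref{lemma:mix}: two random homomorphisms sampled from two partial forests whose union contains the partial tent $\Delta^p_{(r-i,i)}$ must have disjoint supports, and comparing the entropy of their mixture against a chain-rule upper bound produces the superadditivity constraints $x_i+x_j\leq x_{i+j}$ (Lemma~\ref{lemma:alt-tent-ineq}). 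This reduces the problem to the constrained optimization $\max\{\prod_{i=1}^r x_i : (x_1,\ldots,x_r)\in\mathcal{X}_{r,\floor{r/2}}\}$, whose value is $r!/r^r$ with unique maximiser $x_i=i/r$ (Theorem~\ref{thm:main} with $k=\floor{r/2}$, or Theorem~7.1 of \cite{CY}). The mixture/disjoint-supports step is the engine that converts tent-freeness into quantitative entropy constraints; it has no analogue in your proposal, and without it you have no inequality to feed into Jensen. If you want to salvage your route you would need to find a combinatorial substitute for these superadditivity constraints that holds for the uniformly random edge, and it is not at all clear that one exists.
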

Chao and Yu's proof of Theorem \ref{thm:CYmain} used a novel entropy method, which reduced this Tur\'an density problem to an optimisation problem whose optimal value is an upper bound on $\pi(\mathcal{F}_{r,\floor{r/2}})$. They then showed that the optimal value is $r!/r^r$, which combined with the fact that no $F\in\mathcal{F}_{r,\floor{r/2}}$ is $r$-partite gives the result.

Soon after, using a powerful framework developed by Liu, Mubayi and Reiher \cite{LMR23unif} and refined by Hou, Liu and Zhao \cite{HLZ24} to prove strong stability results in Tur\'an problems, Liu \cite{L} proved the following exact version of Theorem \ref{thm:CYmain}.  
\begin{theorem}[{\cite[Theorem~1.2]{L}}]\label{thm:Lmain}
For every $r\geq2$ and every sufficiently large $n$, $\ex(n,\mathcal{F}_{r,\floor{r/2}})=|E(T^r(n))|$, with $T^r(n)$ being the unique extremal construction. 
\end{theorem}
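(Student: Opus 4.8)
\noindent\emph{Proof plan.} I would follow the now-standard ``density $\Rightarrow$ stability $\Rightarrow$ exact'' route for hypergraph Tur\'an problems, using the frameworks of Liu--Mubayi--Reiher and Hou--Liu--Zhao. Theorem~\ref{thm:CYmain} supplies the density $\pi(\cF_{r,\floor{r/2}})=r!/r^r$, hence $\ex(n,\cF_{r,\floor{r/2}})=(1+o(1))|E(T^r(n))|$; so it suffices to show that any $\cF_{r,\floor{r/2}}$-free $r$-graph $H$ on a large vertex set with $|E(H)|\ge|E(T^r(n))|$ must equal $T^r(n)$.

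\smallskip
\noindent\emph{Step 1 (stability).} First I would prove that every $\cF_{r,\floor{r/2}}$-free $H$ with $|E(H)|\ge(r!/r^r-o(1))\binom nr$ becomes a subgraph of a complete $r$-partite $r$-graph after editing $o(n^r)$ edges, with parts of size $(1+o(1))n/r$. I would obtain this either by invoking the Liu--Mubayi--Reiher stability machinery (whose hypotheses --- a known degenerate Tur\'an density attained by a single-edge blow-up construction --- hold here), or directly, by analysing when the Chao--Yu entropy bound is close to tight and showing that near-tightness forces the auxiliary random $r$-tuple to be close to uniform on transversal tuples. This is the one place where the structure of the whole family $\cF_{r,\floor{r/2}}$ is genuinely exploited: no such stability statement can hold for a single tent like $\mathbb{T}_r$ once $r\ge5$, since then $\pi(\mathbb{T}_r)>r!/r^r$ and the extremal $\mathbb{T}_r$-free graphs are denser than --- hence not close to --- $T^r(n)$.

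\smallskip
\noindent\emph{Step 2 (minimum degree and cleaning).} By induction on $n$: if some vertex $v$ had $\deg_H(v)<|E(T^r(n))|-|E(T^r(n-1))|$ then $H-v$ would be $\cF$-free on $n-1$ vertices with more than $|E(T^r(n-1))|$ edges, contradicting the statement for $n-1$; together with Step~1 this gives $\delta(H)=(1-o(1))(n/r)^{r-1}$, so in the $r$-partition $V_1\cup\dots\cup V_r$ minimising the number of non-transversal edges, every vertex is transversally joined to all but an $o(1)$-fraction of the transversal $(r-1)$-sets avoiding its own part. I would then show no ``defect'' survives. If some $v$ lies in an edge $e$ whose other $r-1$ vertices are not in distinct parts, then some part contains two of them while some part misses $e$; I would split $e$ as the folded edge $e_2$ of a tent and, using the high transversal degrees, greedily pick two transversal edges $e_1,e_3$ completing it to a copy of $\mathbb{T}_r\cong\Delta_{(r-1,1)}$. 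If instead every edge at $v$ has transversal link but two such links avoid different parts, I would take a ``heavy'' missing-part class (one exists, as $\deg_H(v)\approx(n/r)^{r-1}$ is split among at most $r$ classes) together with one edge from another class, and again assemble $\mathbb{T}_r$ by a first-moment choice of the remaining vertices. Either way $H$ contains a member of $\cF_{r,\floor{r/2}}$, a contradiction; hence $H$ is a subgraph of a balanced complete $r$-partite $r$-graph, and $|E(H)|\ge|E(T^r(n))|$ forces $H=T^r(n)$, with uniqueness.

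\smallskip
\noindent\emph{Main obstacle.} The crux is Step~1: upgrading ``the extremal density is $r!/r^r$'' to ``near-extremal graphs are essentially $r$-partite''. This is exactly the point at which one must control how slack in the Chao--Yu entropy optimisation propagates into structure, and it is also the step that governs how small the forbidden family may be --- which is presumably why the improvement to $\ceil{r/e}$ tents in the main theorem reduces to a sharper analysis of that very optimisation. Step~2 is routine by comparison, though the defect-elimination requires a careful case split to guarantee a copy of $\mathbb{T}_r$ in every configuration, and the inductive minimum-degree argument needs a base case absorbed into the stability estimate.
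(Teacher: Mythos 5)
The paper does not actually prove Theorem~\ref{thm:Lmain} -- it cites it from Liu~\cite{L} -- but Section~\ref{sec:turan} runs Liu's proof scheme for the sharpened Theorem~\ref{thm:mainturan}, so that is the right comparison. Your overall ``density $\Rightarrow$ stability $\Rightarrow$ exact'' architecture via the Liu--Mubayi--Reiher and Hou--Liu--Zhao framework matches the real proof, and your degree-blowing/defect-elimination ideas in Step~2 are a hands-on re-derivation of what's packaged as \emph{vertex-extendability} (Proposition~\ref{PROP:Tr-vtx-extend}, cited from~\cite{L}). However, there are two genuine gaps.

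\emph{(1) Blowup-invariance.} You describe the hypothesis of the LMR machinery as ``a known degenerate Tur\'an density attained by a single-edge blow-up construction,'' but that is not what the framework requires. Theorem~\ref{THM:LMR-vtx-extend}(i) demands that the forbidden family be \emph{blowup-invariant}, and $\mathcal F_{r,\lfloor r/2\rfloor}$ is not: a blowup of a tent-free $r$-graph can contain a tent. The actual proof circumvents this by passing to the strictly larger, blowup-invariant family $\mathcal T_{r,k}$ (all $r$-graphs with three edges $A,B,C$ with $A\subset B\cup C$, $(B\cap C)\setminus A\neq\emptyset$, $|A\cap B|\ge r-k$), proving degree-stability for $\mathcal T_{r,k}$, converting that to edge-stability, and then using the hypergraph removal lemma to transfer edge-stability from $\mathcal T_{r,k}$ back to $\mathcal F_{r,k}$ before applying Theorem~\ref{THM:LMR-vtx-extend}(ii). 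Without this bootstrap through $\mathcal T_{r,k}$ your invocation of the machinery does not go through; this is precisely the technical heart of Liu's argument and not a matter of routine.

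\emph{(2) The exact step.} Your induction on $n$ has a base-case problem you yourself flag at the end: stability (and hence the inductive step) is only available for $n\ge N$, and you have no independent proof of $\ex(N,\mathcal F)\le |E(T^r(N))|$ at which to start. The actual proof is not an induction but a finite vertex-peeling: from an extremal $H$ on $n$ vertices, repeatedly delete a vertex of degree below $e_{n_i}-e_{n_{i+1}}-\eps n_i^{r-1}/2$; the peeling must stop within $n/10$ steps (else $H_i$ would have too many edges, violating the asymptotic bound $\ex(m,\mathcal F)\le(1+\eps)m^r/r^r$), and when it stops the subgraph $H_t$ has high minimum degree, hence is $r$-partite by degree-stability, which forces $t=0$ and $H=T^r(n)$. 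This peeling argument replaces the induction and sidesteps the base case entirely; you should substitute it for your Step~2.

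One further minor point: a near-tightness analysis of the entropy optimisation, which you mention as an alternative to the LMR route, is indeed used in the paper, but only to establish the $r$-partiteness of symmetrised hypergraphs of high minimum degree (Proposition~\ref{PROP:min-deg-r-partite}); it does not by itself deliver edge-stability for general near-extremal $H$.
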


In this paper, we improve both Theorem \ref{thm:CYmain} and Theorem \ref{thm:Lmain}. First, we show that forbidding a smaller family of tents still gives Tur\'an density $r!/r^r$. Note that we start with $r=4$ as $r=3$ does not satisfy $\ceil{r/e}\leq\floor{r/2}$.
\begin{theorem}\label{thm:maindensity}
For every $r\geq4$, let $k=\ceil{r/e}$, then $\pi(\mathcal{F}_{r,k})=r!/r^r$.
\end{theorem}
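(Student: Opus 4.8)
The plan is to follow Chao and Yu's entropy-method framework for the upper bound, which reduces $\pi(\mathcal{F}_{r,k})$ to the optimal value of a certain optimisation problem, and to show that forbidding only the first $k=\ceil{r/e}$ tents is already enough to force the optimiser down to $r!/r^r$. The lower bound $\pi(\mathcal{F}_{r,k}) \ge r!/r^r$ is free: since no $F \in \mathcal{F}_{r,k} \subseteq \mathcal{F}_{r,\floor{r/2}}$ is $r$-partite, the general result quoted in the introduction applies verbatim. So the whole content is the matching upper bound $\pi(\mathcal{F}_{r,k}) \le r!/r^r$.

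First I would recall the entropy setup: given an $\mathcal{F}_{r,k}$-free hypergraph $H$ on $n$ vertices, one picks a uniformly random edge (together with a random ordering of its vertices, or a random "last vertex", depending on the exact formulation in \cite{CY}) and bounds the entropy of this random object. Forbidding the tent $\Delta_{(r-i,i)}$ translates, via the link/shadow structure, into a constraint that rules out certain configurations of pairs of edges sharing a prescribed intersection pattern; quantitatively, each forbidden $\Delta_{(r-i,i)}$ for $1 \le i \le k$ yields one inequality constraining the relevant conditional-entropy or probability quantities $p_1, \dots, p_{r-1}$ (the "profile" of how a random edge distributes across a partition-like structure). The key step is then the optimisation lemma: maximise the entropy-derived objective (which on $T^r(n)$ equals $\log(r!/r^r) + \log\binom nr$ up to lower-order terms) subject only to the $k$ constraints coming from $i=1,\dots,k$, and show the maximum is still $r!/r^r$. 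In Chao–Yu this is done with all $\floor{r/2}$ constraints; the task here is to check that the constraints for $i > \ceil{r/e}$ are redundant at the optimum — i.e. the optimal solution of the smaller program automatically satisfies them, or more robustly, that the Lagrangian / convexity argument bounding the objective never needs them.

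The main obstacle, and the place where the number $\ceil{r/e}$ actually enters, is precisely this redundancy analysis. I expect the argument to run as follows: at the candidate optimum the relevant quantities are "balanced" (all equal to $1/r$, mirroring $T^r(n)$), and one must show that any feasible point for the $k$-constraint program with strictly larger objective would violate one of the retained constraints $i \le k$. Concretely, I would set up the bound so that the product $\prod_{i=1}^{r-1}$ of certain factors is controlled, then use a convexity/AM–GM or weighted-power-mean estimate; the constraint for index $i$ controls a block of size roughly $i$, and the point is that the blocks for $i = 1, \dots, \ceil{r/e}$ already cover — in the multiplicative sense governing the objective — the full weight $\log(r^r/r!) = \sum_{j=1}^{r}\log(r/j)$. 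The appearance of $e$ strongly suggests an estimate of the shape $\sum_{j \le r/e}\log(r/j) \ge \big(1 - o(1)\big)\sum_{j \le r}\log(r/j)$, or equivalently that $\log(r!) - \log(\ceil{r/e}!\,\cdot\,\text{something}) $ already accounts for essentially all of $r\log r - \log r!$; Stirling's approximation ($\log r! = r\log r - r + O(\log r)$, and $\log(r/e)! = (r/e)(\log r - 1 - \log(r/e)) + \cdots$) is what makes $r/e$ the threshold below which the "tail" constraints become unnecessary. I would isolate this as a clean analytic lemma — something like: for the optimisation program with constraints indexed by $i \in \{1,\dots,k\}$, if $k \ge \ceil{r/e}$ then the optimal value equals $r!/r^r$ — prove it by exhibiting explicit dual multipliers supported on $\{1,\dots,k\}$ (or by a direct convexity argument), and then the density statement follows immediately by plugging into the Chao–Yu entropy inequality. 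The rest (checking $\ceil{r/e} \le \floor{r/2}$ for $r \ge 4$, and that the $i=k$ tent is well-defined, i.e. $k \le \floor{r/2}$) is routine bookkeeping.
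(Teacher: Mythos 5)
Your high-level framework is the same as the paper's: the lower bound is immediate (a single edge is $\mathcal{F}_{r,k}$-hom-free and has blowup density $r!/r^r$), and the upper bound goes through the Chao--Yu entropy method, which reduces the problem to showing that a certain constrained optimisation has optimal value $r!/r^r$. But your sketch has two genuine gaps at exactly the places where the work lies.

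First, the mechanism by which $e$ enters is not a Stirling-type estimate of the shape $\sum_{j \le r/e}\log(r/j) \ge \bigl(1-o(1)\bigr)\sum_{j\le r}\log(r/j)$; that inequality is in fact false (the ratio of the two sides tends to $2/e \approx 0.74$, not $1$). The optimisation problem here is to maximise $\prod_{i=1}^r x_i$ over the polytope $\mathcal{X}_{r,k}$ of sequences $0<x_1\le\cdots\le x_r=1$ satisfying $x_i+x_j\le x_{i+j}$ for $i\le k$ and $i\le j\le r-i$, and the threshold $k=\ceil{r/e}$ arises from the elementary bound
\[
-k+\sum_{i=k+1}^{r}\frac{r-i}{i}\;\le\; r\int_{k/r}^{1}\frac{1-t}{t}\,\mathrm{d}t - k \;=\; r\bigl(\log r - \log k - 1\bigr),
\]
which is nonpositive precisely when $r\le ke$. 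This is a harmonic-sum estimate, not a factorial one, and it only kicks in after one has first forced the maximiser to satisfy $x_i = ix_1$ for all $i\le k$.

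Second, and more seriously, you treat the hardest step as a triviality. You say the optimum is ``balanced'' and that the constraints for $i>k$ are ``redundant at the optimum,'' but that is exactly what must be proven, and it is false at the level of feasibility: $\mathcal{X}_{r,k}$ is strictly larger than $\mathcal{X}_{r,\floor{r/2}}$, so one cannot argue by redundancy of constraints; one must show that the \emph{extra} feasible points gained by dropping constraints never increase the product. The paper's proof of this is the bulk of the argument: it first establishes a symmetry property of any maximiser ($x_j + x_{r-j}=1$ for $j\le k$), then introduces a structural decomposition into ``segments'' (maximal arithmetic-progression blocks of the sequence), and proves by a delicate multi-case perturbation argument that the initial segment of any maximiser must have length at least $k+1$. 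Only then does the clean integral estimate above close the argument. Your proposal offers no substitute for this structural analysis. The alternative you gesture at --- exhibiting dual multipliers supported on $\{1,\dots,k\}$ --- is in principle plausible, since $\log\prod x_i$ is concave and the constraint set is a polytope, so a KKT certificate would suffice; but you do not construct the multipliers, and the feasibility of that construction (in particular, the nonnegativity of all multipliers) is itself where the condition $r\le ke$ would have to appear, so the difficulty is merely relocated, not resolved. As written, the proposal identifies the correct skeleton but leaves the central lemma unproved.
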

Our proof follows Chao and Yu's entropy method, with most of the effort devoted to showing that the corresponding optimisation problem obtained using $\mathcal{F}_{r,k}$ instead of $\mathcal{F}_{r,\floor{r/2}}$ still has the same optimal value $r!/r^r$. Moreover, we show that the value $k=\ceil{r/e}$ cannot be improved using this method as the corresponding optimal value can exceed $r!/r^r$ if $k<\ceil{r/e}$.

We then proceed as in \cite{L} to transform this density result into an exact Tur\'an number result.
\begin{theorem}\label{thm:mainturan}
For every $r\geq4$, let $k=\ceil{r/e}$, then for every sufficiently large $n$, $\ex(n,\mathcal{F}_{r,k})=|E(T^r(n))|$, with $T^r(n)$ being the unique extremal construction.
\end{theorem}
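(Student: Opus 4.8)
The plan is to follow the strategy of Liu \cite{L}, which in turn builds on the stability framework of Liu, Mubayi and Reiher \cite{LMR23unif} and Hou, Liu and Zhao \cite{HLZ24}, and to check that every ingredient still goes through with the smaller family $\mathcal{F}_{r,k}$ in place of $\mathcal{F}_{r,\floor{r/2}}$. The overall skeleton is: (i) the density result Theorem \ref{thm:maindensity} gives $\pi(\mathcal{F}_{r,k})=r!/r^r$, and since $\mathcal{F}_{r,k}\subseteq\mathcal{F}_{r,\floor{r/2}}$ any $\mathcal{F}_{r,k}$-free hypergraph is a fortiori examined against a weaker constraint, so a priori $\ex(n,\mathcal{F}_{r,k})\geq\ex(n,\mathcal{F}_{r,\floor{r/2}})=|E(T^r(n))|$ for large $n$ by Theorem \ref{thm:Lmain}; (ii) prove a matching upper bound by establishing a strong stability statement — every $\mathcal{F}_{r,k}$-free hypergraph on $n$ vertices with at least $|E(T^r(n))|-o(n^r)$ edges is "close" in edit distance to $T^r(n)$ — and then (iii) run the standard stability-to-exactness argument (a.k.a. the symmetrisation / local-move cleaning step) to upgrade closeness to equality and uniqueness of the extremal configuration.

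For step (ii) I would invoke the black-box theorems of the Liu--Mubayi--Reiher / Hou--Liu--Zhao machinery: they reduce strong stability for a family $\mathcal{F}$ to verifying a small number of structural hypotheses about $\mathcal{F}$ and the conjectured extremal construction $T^r(n)$ — essentially that $T^r(n)$ is $\mathcal{F}$-free, that it is "vertex-extendable" or "degree-stable" in the precise sense required, and that the construction is rigid under small perturbations. The key point is that these hypotheses only reference (a) the value of the Tur\'an density, which we already have from Theorem \ref{thm:maindensity}, and (b) the existence of \emph{some} tent in $\mathcal{F}_{r,k}$ embedding into any hypergraph obtained from $T^r(n)$ by a controlled local modification. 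Since $\Delta_{(r-1,1)}=\mathbb{T}_r\in\mathcal{F}_{r,k}$ already (as $1\le k=\ceil{r/e}$), the same embeddings that Liu uses when only one extra edge is added across parts are available verbatim; the tents $\Delta_{(r-i,i)}$ with larger $i$ are needed precisely for the more violent local modifications, and the content of Theorem \ref{thm:maindensity} (together with the claim in the excerpt that $k=\ceil{r/e}$ is exactly the threshold for the optimisation bound) is that indices up to $\ceil{r/e}$ already suffice to rule out all the relevant "bad" local configurations. So the plan is to re-audit Liu's case analysis and replace each appeal to some $\Delta_{(r-i,i)}$ with $i>k$ by an appeal to one with $i\le k$, using the extra room that the density bound now provides.

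For step (iii), the exactness-and-uniqueness argument is essentially formal once strong stability is in hand: if $H$ is $\mathcal{F}_{r,k}$-free with $e(H)=\ex(n,\mathcal{F}_{r,k})\ge |E(T^r(n))|$ and $H$ is $o(n^r)$-close to $T^r(n)$, one shows that the "defect" edges and vertices can be removed and re-inserted to strictly increase the edge count unless $H$ is already exactly $T^r(n)$ — this uses a stability-transfer / link-graph cleaning lemma that again only requires finding a copy of some small member of $\mathcal{F}_{r,k}$ whenever a vertex has an anomalous link, and $\mathbb{T}_r$ plus the low-index tents cover these cases. I would structure the write-up as: Lemma (strong stability for $\mathcal{F}_{r,k}$, via \cite{LMR23unif,HLZ24} with hypotheses verified using Theorem \ref{thm:maindensity}), followed by the deduction of Theorem \ref{thm:mainturan} by the standard argument of \cite{L}, pointing to the places where the present family replaces the old one.

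The main obstacle I anticipate is step (ii): ensuring that \emph{every} local modification of $T^r(n)$ that the stability framework forces us to handle still contains a tent with index at most $k=\ceil{r/e}$. In Liu's proof the large-index tents $\Delta_{(r-i,i)}$ with $i$ close to $r/2$ are there to catch the "worst-case" perturbations where a new edge overlaps an old part in as few vertices as possible; with only indices up to $\ceil{r/e}$ available, one must argue that such extreme perturbations cannot simultaneously occur and keep the edge count near-extremal — and this is exactly the quantitative gain encoded in Theorem \ref{thm:maindensity} and its sharpness. Concretely, the danger is a hypergraph that is $\mathcal{F}_{r,k}$-free, is near-extremal, yet is \emph{not} close to $T^r(n)$ because it exploits precisely the tents of index in $(k,\floor{r/2}]$ that we have dropped; ruling this out rigorously — rather than merely asymptotically through $\pi$ — is the crux, and will likely require redoing the most delicate counting step of \cite{L} with the refined optimisation bound plugged in.
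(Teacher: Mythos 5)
Your high-level plan matches the paper's route (LMR/HLZ degree-stability plus Liu's clean-up argument), but it glosses over the two technical moves the paper actually has to supply, and your characterisation of what the machinery requires is a bit off in a way that would make a write-up stall. First, $\mathcal{F}_{r,k}$ is \emph{not} blowup-invariant, so Theorem~\ref{THM:LMR-vtx-extend}\ref{THM:LMR-vtx-extend-1} cannot be applied to it directly. The paper gets around this by introducing the blowup-invariant superset $\mathcal{T}_{r,k}$ of three-edge configurations $A,B,C$ with $A\subset B\cup C$, $(B\cap C)\setminus A\neq\emptyset$ and $|A\cap B|\geq r-k$, proving degree-stability for $\mathcal{T}_{r,k}$ first (via Theorem~\ref{THM:LMR-vtx-extend}\ref{THM:LMR-vtx-extend-1}), turning that into edge-stability, and only then transferring back to $\mathcal{F}_{r,k}$ with the Hypergraph Removal Lemma followed by Theorem~\ref{THM:LMR-vtx-extend}\ref{THM:LMR-vtx-extend-2}. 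Your plan never mentions this detour, and without it the ``black box'' simply does not open for $\mathcal{F}_{r,k}$.

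Second, the structural hypothesis the framework consumes is not really ``some tent embeds into every controlled local perturbation of $T^r(n)$''; it is (i) symmetrised-stability, i.e.\ every symmetrised $\mathcal{T}_{r,k}$-free hypergraph with high minimum degree is $r$-partite (Proposition~\ref{PROP:min-deg-r-partite}), and (ii) vertex-extendability, which indeed comes for free from the single tent $\Delta_{(r-1,1)}$ via Liu's Proposition~4.1, as you correctly identify. Crucially, Proposition~\ref{PROP:min-deg-r-partite} is obtained by rerunning Liu's Propositions~3.1 and~5.1 with $\mathcal{T}_{r,k}$ in place of $\mathcal{T}_r$, and the input those arguments need is the \emph{uniqueness} of the maximiser $x_i=i/r$ in Theorem~\ref{thm:main}, not merely the value $\pi(\mathcal{F}_{r,k})=r!/r^r$. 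Your plan only invokes the density value, so that case analysis would be left dangling. With those two repairs made explicit, your outline lines up with the paper's proof, and the closing ``degree-stability implies exactness and uniqueness'' step is indeed the standard vertex-deletion argument you describe.
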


The rest of this paper is organized as follows. In Section \ref{sec:prelim}, we introduce the necessary definitions related to Chao and Yu's entropy method and recall several of their key lemmas from \cite{CY}. This culminates in Section \ref{sec:reduction} where we adapt their method to reduce the Tur\'an density problem to an optimisation problem. In Section \ref{sec:main}, we solve this optimisation problem and prove Theorem \ref{thm:maindensity}. In Section \ref{sec:turan}, we deduce Theorem \ref{thm:mainturan} from Theorem \ref{thm:maindensity} using the machinery developed in \cite{HLZ24}, \cite{L} and \cite{LMR23unif}. Finally, in Section \ref{sec:conclusion}, we mention several consequences of our results and a few related open problems.

\section{The entropy method}\label{sec:prelim}

In this section, we give an overview of the entropy method introduced by Chao and Yu in~\cite{CY} to prove Theorem \ref{thm:CYmain}. For more details and proofs, we refer the readers to \cite{CY}. We begin by defining (Shannon) entropy.
\begin{definition}[Entropy]
    Let $X,Y,X_1,\ldots,X_n$ be any discrete random variables.
    \begin{itemize}
        \item The \textit{support} of $X$ is the set of values $x$ satisfying $\mathbb{P}(X=x)>0$, and is denoted by $\supp(X)$.
        \item The \textit{(Shannon) entropy} of $X$ is defined to be \[\HH(X)=-\sum_{x\in\supp(X)}\PP(X=x)\log_2\PP(X=x).\]
        \item The entropy of the random tuple $(X_1,\ldots,X_n)$ is denoted by $\HH(X_1,\ldots,X_n)$.
        \item The \textit{conditional entropy} of $X$ given $Y$ is defined to be 
        \begin{align*}
        \HH(X\mid Y)&=\sum_{y\in\supp(Y)}\PP(Y=y)\HH(X\mid Y=y)\\&=-\sum_{(x,y)\in \supp(X,Y)}\PP(X=x,Y=y)\log_2\left(\frac{\PP(X=x,Y=y)}{\PP(Y=y)}\right).
        \end{align*}
    \end{itemize}
\end{definition}

The following proposition collects several useful properties of entropy. 
\begin{proposition}\label{prop:entropyproperty}
Let $X,Y,X_1,\ldots,X_n$ be any discrete random variables.
\begin{itemize}
    \item $\HH(X\mid Y)=\HH(X,Y)-\HH(Y)$.
    \item (Chain rule) $\HH(X_1,\dots,X_n)=\HH(X_1)+\HH(X_2\mid X_1)+\dots+\HH(X_n\mid X_1,\dots,X_{n-1})$.
    \item (Uniform bound) $\HH(X)\leq \log_2\abs{\supp(X)}$, with equality if and only if $X$ is uniform.
    \item (Subadditivity) $\HH(X,Y)\leq \HH(X)+\HH(Y)$.
    \item (Dropping condition) $\HH(X\mid Y)\leq \HH(X)$.
\end{itemize}
\end{proposition}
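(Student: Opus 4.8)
The plan is to derive all five items from two elementary facts: a definitional identity obtained by unpacking the definitions, and Gibbs' inequality (non-negativity of relative entropy).

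First I would prove the identity $\HH(X\mid Y)=\HH(X,Y)-\HH(Y)$ by direct computation. Expanding $\HH(X,Y)$ as a sum over $\supp(X,Y)$ and rewriting $\HH(Y)=-\sum_{(x,y)\in\supp(X,Y)}\PP(X=x,Y=y)\log_2\PP(Y=y)$ (valid since summing $\PP(X=x,Y=y)$ over $x$ recovers $\PP(Y=y)$, and every $(x,y)\in\supp(X,Y)$ has $\PP(Y=y)>0$), subtraction yields exactly $-\sum_{(x,y)}\PP(X=x,Y=y)\log_2(\PP(X=x,Y=y)/\PP(Y=y))$, which is the stated definition of $\HH(X\mid Y)$. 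The chain rule then follows by induction on $n$: the base case $n=2$ is this identity with $(X_1,X_2)$ playing the roles of $(Y,X)$; for the inductive step, apply the $n=2$ case with the tuple $(X_1,\dots,X_{n-1})$ in the role of $Y$ and $X_n$ in the role of $X$, obtaining $\HH(X_1,\dots,X_n)=\HH(X_1,\dots,X_{n-1})+\HH(X_n\mid X_1,\dots,X_{n-1})$, and then expand the first summand by the induction hypothesis.

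Next I would establish Gibbs' inequality in the needed form: if $p,q$ are probability distributions on a finite set with $\supp(p)\subseteq\supp(q)$, then $\sum_s p(s)\log_2(q(s)/p(s))\le 0$, with equality iff $p=q$. This is immediate from Jensen's inequality applied to the strictly concave function $t\mapsto\log_2 t$: one has $\sum_s p(s)\log_2(q(s)/p(s))\le\log_2\big(\sum_{s\in\supp(p)}p(s)\cdot q(s)/p(s)\big)=\log_2\big(\sum_{s\in\supp(p)}q(s)\big)\le\log_2 1=0$, and strictness of the concavity makes the first inequality an equality only when $q(s)/p(s)$ is constant on $\supp(p)$, while the second is an equality only when $\supp(q)=\supp(p)$; together these force $p=q$. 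The remaining three items all follow. For the uniform bound, apply Gibbs with $p$ the law of $X$ and $q$ uniform on $\supp(X)$: this gives $\HH(X)-\log_2\abs{\supp(X)}\le 0$, with equality exactly when $p=q$, i.e.\ when $X$ is uniform. For the dropping condition, the Paragraph-one identity gives $\HH(X)-\HH(X\mid Y)=\HH(X)+\HH(Y)-\HH(X,Y)$, and this equals $\sum_{(x,y)\in\supp(X,Y)}\PP(X=x,Y=y)\log_2\big(\PP(X=x,Y=y)/(\PP(X=x)\PP(Y=y))\big)\ge 0$ by Gibbs applied with $p$ the joint law of $(X,Y)$ and $q$ the product of the marginal laws. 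Finally, subadditivity is obtained by combining the chain rule $\HH(X,Y)=\HH(X)+\HH(Y\mid X)$ with the dropping condition $\HH(Y\mid X)\le\HH(Y)$.

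There is essentially no hard step here, as these are textbook properties of Shannon entropy; the proposal is simply to organise them so that everything is reduced to one convexity input. The only points demanding a little care are the bookkeeping of supports, so that every conditional probability and logarithm appearing is well-defined, and invoking the \emph{strictness} of the concavity of $\log_2$ to pin down the equality case in the uniform bound — the one item whose equality characterisation is asserted.
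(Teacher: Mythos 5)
Your proof is correct, and it is the standard textbook derivation. The paper itself gives no proof of this proposition — it is stated as a collection of well-known facts about Shannon entropy with no argument supplied — so there is no ``paper's proof'' to compare against. Your organisation (reduce everything to one definitional identity plus Gibbs' inequality via Jensen, with the equality case tracked through strict concavity) is clean and cites exactly the convexity needed; the support bookkeeping you flag is indeed the only place where care is required, and you handle it correctly by restricting sums to $\supp(X,Y)$ and noting that $\supp$ of a joint law is contained in the product of the marginal supports.
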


\subsection{Mixture and the mixture bound}
In this subsection, we define and study the \emph{mixture} of random variables. We are particularly interested in mixtures of random variables whose supports do not overlap too much.
\begin{definition}[Mixture]
    Let $X_1,\dots,X_n$ be discrete random variables. Let $a\in\mathbb{Z}^+$ and let $w_1,\dots,w_n\geq 0$ satisfy $\sum_{i=1}^n w_i=1$.
    \begin{itemize}
    \item $X_1,\dots,X_n$ have \emph{$(a+1)$-wise disjoint supports} if for any element $x\in \cup_{i=1}^n \supp (X_i)$, there are at most $a$ indices $i\in[n]$ such that $x\in\supp (X_i)$. 
    \item The \emph{mixture} of $X_1,\dots,X_n$ with \textit{weight} $w_1,\dots, w_n$ is the random variable $Z$ obtained by first picking an independent random index $\bi$ with probability $\PP(\bi =i)=w_i$ for every $i\in[n]$, then setting $Z=X_{\bi}$.
    \end{itemize}
\end{definition}

The following lemma from~\cite{CY} shows that for given discrete random variables with $(a+1)$-wise disjoint supports, one of their mixtures has large entropy.
\begin{lemma}[Mixture bound, {\cite[Lemma~3.9]{CY}}]\label{lemma:mix}
    Let $X_1,\dots,X_n$ be discrete random variables with $(a+1)$-wise disjoint supports. Then there exists a mixture of $X_1,\dots,X_n$, say $Z$, such that
    \[\sum_{i=1}^n 2^{\HH(X_i)}\leq a2^{\HH(Z)}.\]
\end{lemma}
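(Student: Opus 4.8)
The plan is to pick the weights cleverly so that the mixture $Z$ has large entropy, using the defining property that no element lies in more than $a$ of the supports. First I would recall that if $Z$ is the mixture of $X_1,\dots,X_n$ with weights $w_1,\dots,w_n$, then by conditioning on the index $\bi$ we have $\HH(Z \mid \bi) = \sum_{i=1}^n w_i \HH(X_i)$, and by the chain rule $\HH(Z,\bi) = \HH(Z) + \HH(\bi \mid Z) = \HH(\bi) + \HH(Z \mid \bi)$. Since dropping a condition only increases entropy, $\HH(\bi \mid Z) \le \HH(\bi)$ would give $\HH(Z) \ge \HH(Z \mid \bi) = \sum_i w_i \HH(X_i)$, which by itself is just a convexity-type bound and not strong enough; the point is that we can do better because $\HH(\bi \mid Z)$ is actually small — knowing the value $Z = x$ pins $\bi$ down to one of at most $a$ choices, so $\HH(\bi \mid Z) \le \log_2 a$ by the uniform bound applied conditionally. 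Hence for \emph{any} choice of weights,
\[
\HH(Z) \;\ge\; \HH(\bi) + \sum_{i=1}^n w_i \HH(X_i) - \log_2 a.
\]

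Next I would choose the weights to make the right-hand side as large as possible, or at least large enough. The natural choice is $w_i = 2^{\HH(X_i)} / S$ where $S = \sum_{j=1}^n 2^{\HH(X_j)}$. With this choice, $\HH(\bi) = -\sum_i w_i \log_2 w_i = -\sum_i w_i(\HH(X_i) - \log_2 S) = \log_2 S - \sum_i w_i \HH(X_i)$. Substituting into the inequality above, the $\sum_i w_i \HH(X_i)$ terms cancel exactly, leaving
\[
\HH(Z) \;\ge\; \log_2 S - \log_2 a \;=\; \log_2\!\Big(\tfrac{1}{a}\sum_{i=1}^n 2^{\HH(X_i)}\Big),
\]
which rearranges to $\sum_{i=1}^n 2^{\HH(X_i)} \le a\, 2^{\HH(Z)}$, as desired.

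The one step that needs genuine care — and which I expect to be the main (though modest) obstacle — is justifying $\HH(\bi \mid Z) \le \log_2 a$ rigorously. This follows because $\HH(\bi \mid Z) = \sum_{x \in \supp(Z)} \PP(Z = x)\,\HH(\bi \mid Z = x)$, and for each fixed $x$ the conditional distribution of $\bi$ given $Z = x$ is supported on $\{\,i : x \in \supp(X_i)\,\}$, a set of size at most $a$ by the $(a+1)$-wise disjoint supports hypothesis; the uniform bound then gives $\HH(\bi \mid Z = x) \le \log_2 a$, and averaging over $x$ preserves the inequality. One should also handle the degenerate case $S = 0$ (all $X_i$ deterministic, so every $\HH(X_i) = 0$ and the claim is trivial with any mixture) and note that weights $w_i = 0$ are permitted, so indices with $\HH(X_i)$ small simply contribute little; none of this affects the argument. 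Assembling these pieces yields the lemma.
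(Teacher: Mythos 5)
Your proof is correct and is essentially the standard argument for this lemma (the paper itself cites \cite{CY} rather than reproving it): condition on the index $\bi$, bound $\HH(\bi \mid Z) \le \log_2 a$ using the $(a+1)$-wise disjointness of the supports, and choose weights $w_i$ proportional to $2^{\HH(X_i)}$ so that the $\sum_i w_i \HH(X_i)$ terms cancel. One small correction: the ``degenerate case $S=0$'' you mention cannot occur, since $2^{\HH(X_i)} \ge 1$ for every discrete random variable $X_i$, so $S \ge n > 0$ always; no special handling is needed.
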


\subsection{Blowup density and entropic density}
In this subsection we define two related notions of density. We begin with the following well-known notions of homomorphism and blowup density, and two useful results in the study of hypergraph Tur\'an problems. 
\begin{definition}[Homomorphism]
Let $G,H$ be $r$-uniform hypergraphs and let $\mathcal F$ be a family of $r$-uniform hypergraphs. 
\begin{itemize}
    \item A \textit{homomorphism} from $G$ to $H$ is a map $f:V(G)\to V(H)$ such that for every edge $e$ in $G$, the image $f(e)$ is also an edge in $H$. 
    \item $H$ is $\cF$\textit{-hom-free} if there is no homomorphism from any $F\in\cF$ to $H$. 
\end{itemize}
\end{definition}
\begin{definition}[Blowup density]
Let $H=(V,E)$ be an $r$-uniform hypergraph. 
\begin{itemize}
    \item The \textit{Lagrangian} of $H$, denoted by $L(H)$, is defined by \[L(H)=\max\left\{\sum_{e\in E}\prod_{v\in e}x_v\hspace{1.5mm}\middle\vert\hspace{1.5mm}x_v\geq0\text{ for every } v\in V\text{ and }\sum_{v\in V}x_v=1\right\}.\] 
    \item The \textit{blowup density} of $H$, denoted by $b(H)$, is defined to be $r!L(H)$.
\end{itemize}
\end{definition}
\begin{lemma}\label{lemma:homdensity}
Let $\mathcal{F}$ be a family of $r$-uniform hypergraphs, then $\pi(\mathcal{F})=\sup\{b(H)\mid H\text{ is }\mathcal{F}\text{-hom-free}\}$.
\end{lemma}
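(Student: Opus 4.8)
The plan is to prove the two inequalities $\pi(\mathcal F)\ge\pi^{\ast}$ and $\pi(\mathcal F)\le\pi^{\ast}$ separately, where $\pi^{\ast}=\sup\{b(H):H\text{ is }\mathcal F\text{-hom-free}\}$; the first is elementary, and essentially all of the work is in the second.

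For $\pi(\mathcal F)\ge\pi^{\ast}$, fix an $\mathcal F$-hom-free hypergraph $H$ on vertex set $[h]$. I would first observe that every \emph{blowup} of $H$ --- replace each vertex $i$ by a class $V_i$ and insert all transversal $r$-tuples corresponding to edges of $H$ --- is $\mathcal F$-free: an embedding of some $F\in\mathcal F$ into such a blowup, composed with the projection collapsing each $V_i$ to $i$, would be a homomorphism $F\to H$, contradicting $\mathcal F$-hom-freeness. Then, taking an optimal weighting $(x_i)$ for the Lagrangian $L(H)$, approximating it by a rational weighting and clearing denominators, I obtain for all $n$ in a fixed arithmetic progression a blowup of $H$ on exactly $n$ vertices with at least $(L(H)-\eps)n^{r}$ edges. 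Since such a blowup is $\mathcal F$-free, $\ex(n,\mathcal F)/\binom nr\ge(L(H)-\eps)n^{r}/\binom nr\to r!\,(L(H)-\eps)=b(H)-r!\eps$; letting $n\to\infty$ and then $\eps\to0$ gives $\pi(\mathcal F)\ge b(H)$, and the supremum over $H$ completes this direction.

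For $\pi(\mathcal F)\le\pi^{\ast}$ I would assume, as in our application, that $\mathcal F$ is finite, and set $t=\max_{F\in\mathcal F}\abs{V(F)}$; it then suffices to show that for every $\delta>0$, every $\mathcal F$-free hypergraph on sufficiently many vertices has edge density at most $\pi^{\ast}+\delta$. The engine here is a \emph{blowup-extraction} statement supplied by the hypergraph regularity method: there exist $M=M(\delta,r)$ and $n_0$ such that any $r$-graph $G$ on $n\ge n_0$ vertices of edge density $d$ contains a copy of the balanced $t$-fold blowup $H[t]$ of some $r$-graph $H$ on at most $M$ vertices with $b(H)\ge d-\delta$. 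Granting this, if $G$ is in addition $\mathcal F$-free then $H$ must be $\mathcal F$-hom-free: any homomorphism $F\to H$ with $F\in\mathcal F$ lifts to an embedding $F\hookrightarrow H[t]$ --- since $t\ge\abs{V(F)}$ one can send each fibre injectively into the corresponding class, and each edge of $F$, whose image is a genuine edge of $H$, then lands on a transversal edge of $H[t]$ --- so that $F\subseteq H[t]\subseteq G$, a contradiction. Hence $d-\delta\le b(H)\le\pi^{\ast}$, so $d\le\pi^{\ast}+\delta$; thus $\ex(n,\mathcal F)/\binom nr\le\pi^{\ast}+\delta$ for all large $n$, and letting $n\to\infty$ and $\delta\to0$ yields $\pi(\mathcal F)\le\pi^{\ast}$.

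The step I expect to be the main obstacle is the blowup-extraction statement. One cannot simply take $H=G$: although the uniform weighting already shows $L(G)\ge e(G)/\abs{V(G)}^{r}$ and hence $b(G)$ is at least the density of $G$, the hypergraph $G$ itself need not be $\mathcal F$-hom-free, and there is no purely local device for replacing $G$ by an $\mathcal F$-hom-free subgraph of comparable blowup density --- for instance, a support of an optimal Lagrangian weighting admitting no weight-shift to a smaller support can still carry a homomorphic (though not subgraph) copy of some $F$. Passing to a bounded-size ``hom-dense'' model of $G$ is precisely what requires hypergraph regularity (and is close in spirit to the hypergraph removal lemma); everything else in the argument is bookkeeping. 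All of this is standard; see for instance Keevash's survey \cite{Kee11}.
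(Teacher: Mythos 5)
Your two-step structure (prove $\pi(\mathcal F)\ge\pi^\ast$ by blowing up an $\mathcal F$-hom-free $H$, then prove $\pi(\mathcal F)\le\pi^\ast$ by extracting from a dense $\mathcal F$-free $G$ a bounded-size $\mathcal F$-hom-free $H$ with $H[t]\subseteq G$ and $b(H)$ large) is the right one, and both the easy direction and the lifting argument (``a homomorphism $F\to H$ gives an embedding $F\hookrightarrow H[t]$ once $t\ge\sabs{V(F)}$'') are correct. The paper gives no proof of this lemma because it is classical; the thing worth flagging is that you have reached for a far heavier tool than the statement requires.

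The ``blowup-extraction'' statement you single out as the crux is true, but it is \emph{not} a regularity-method result, and in particular does not need anything in the neighbourhood of the hypergraph removal lemma. It is a two-line consequence of averaging together with the Erd\H{o}s--K\H{o}v\'ari--S\'os--Tur\'an blow-up (supersaturation) lemma: fix $M$ so that $r!\binom{M}{r}/M^{r}\ge 1-\delta/2$. A uniformly random $M$-subset $S\subseteq V(G)$ satisfies $\EE\sqb{e(G[S])}=d\binom{M}{r}$, so a positive proportion of $M$-subsets induce a subgraph of density at least $d-\delta/2$, hence a subgraph $H'$ with $b(H')\ge d-\delta$. Since there are only $2^{\binom{M}{r}}$ labelled $r$-graphs on $M$ vertices, pigeonhole yields one fixed $H$ on $M$ vertices with $b(H)\ge d-\delta$ that appears $\Omega(n^{M})$ times as a (labelled) subgraph of $G$. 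The classical tensor-power/iterated Cauchy--Schwarz argument of Erd\H{o}s then upgrades $\Omega(n^{M})$ copies of $H$ to a copy of $H[t]$ once $n$ is large. No regularity partition is involved at any point, and this is how the statement is proved in the standard references (Keevash's survey, which you already cite, or Frankl--R\"odl). The reason this distinction matters is not cosmetic: the $r$-uniform hypergraph regularity lemma is a genuinely deep theorem with a tower-type (indeed, Ackermann-type) quantitative cost, while the argument above is elementary double counting; invoking regularity here misstates the logical and quantitative status of the lemma, which the paper correctly regards as an easy folklore fact.

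One small additional remark: your argument implicitly fixes $\mathcal F$ finite (via $t=\max_{F\in\mathcal F}\sabs{V(F)}$), which covers the paper's use cases, but the lemma as stated allows infinite $\mathcal F$; in that case one replaces $t$ by $\max_{F\in\mathcal F,\,\sabs{V(F)}\le n}\sabs{V(F)}$ and runs the same argument, since only members of $\mathcal F$ on at most $n$ vertices can embed into an $n$-vertex host.
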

\begin{corollary}\label{lemma:densityineq}
Let $\cF, \mathcal{F}'$ be two families of $r$-uniform hypergraphs. If for every $F'\in\cF'$, there exists $F\in\cF$ such that there exists a homomorphism from $F$ to $F'$, then $\pi(\cF)\leq\pi(\cF')$.
\end{corollary}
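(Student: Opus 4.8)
The plan is to deduce this directly from Lemma~\ref{lemma:homdensity}, which expresses the Tur\'an density of a family as a supremum of blowup densities over hom-free hypergraphs. Concretely, I would show that the hypothesis forces every $\cF$-hom-free $r$-uniform hypergraph to be $\cF'$-hom-free as well. Granting this inclusion of classes, the set of $\cF$-hom-free hypergraphs is contained in the set of $\cF'$-hom-free hypergraphs, so
\[\pi(\cF)=\sup\{b(H)\mid H\text{ is }\cF\text{-hom-free}\}\leq\sup\{b(H)\mid H\text{ is }\cF'\text{-hom-free}\}=\pi(\cF'),\]
which is the claim.

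It remains to establish the inclusion. Let $H$ be any $\cF$-hom-free $r$-uniform hypergraph and suppose, for contradiction, that $H$ is not $\cF'$-hom-free. Then there is some $F'\in\cF'$ together with a homomorphism $g\colon V(F')\to V(H)$. By hypothesis there is some $F\in\cF$ and a homomorphism $f\colon V(F)\to V(F')$. The composition $g\circ f\colon V(F)\to V(H)$ is then a homomorphism from $F$ to $H$: for every edge $e$ of $F$, the set $f(e)$ is an edge of $F'$, hence $g(f(e))$ is an edge of $H$. This contradicts the assumption that $H$ is $\cF$-hom-free, so $H$ must in fact be $\cF'$-hom-free, completing the argument.

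I do not expect any real obstacle here. The only substantive point is that homomorphisms of hypergraphs compose, which is immediate from the definition, and everything else is bookkeeping with suprema. This corollary is stated mainly for later convenience, e.g.\ to compare $\pi(\cF_{r,k})$ across different values of $k$ and to relate the tent families $\cF_{r,k}$ to other triangle-like families; in each such application one simply exhibits the required homomorphisms between the relevant hypergraphs and invokes the statement above.
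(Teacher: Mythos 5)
Your proof is correct and is precisely the intended deduction from Lemma~\ref{lemma:homdensity}: homomorphisms compose, so every $\cF$-hom-free hypergraph is $\cF'$-hom-free, and the inequality between suprema follows. The paper states this as a corollary without elaboration, relying on exactly this argument.
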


To study the blowup density using entropy, Chao and Yu defined the following concept of entropic density in \cite{CY}. Here, a tuple $(X_1,\dots,X_r)$ of random variables is \textit{symmetric} if the distributions of $(X_{\sigma(1)},\dots,X_{\sigma(r)})$ are the same for all permutations $\sigma$ of $[r]$. In particular, the distributions of $X_1,\ldots,X_r$ are all the same.
\begin{definition}[Entropic density, ratio sequence]
    Let $H$ be an $r$-uniform hypergraph.
    \begin{itemize}
        \item A tuple of random vertices $(X_1,\dots,X_r)\in V(H)^r$ is a \emph{random edge with uniform ordering} on $H$ if $(X_1,\dots,X_r)$ is symmetric and $\{X_1,\dots,X_r\}$ is always an edge of $H$.
        \item The \emph{entropic density} of $H$, denoted by $b_{\textup{entropy}}(H)$, is the maximum value of $2^{\HH(X_1,\ldots, X_r)-r\HH(X_1)}$ over all random edges $(X_1,\ldots, X_r)$ with uniform ordering on $H$.
        \item The \emph{ratio sequence} of a random edge $(X_1,\dots,X_r)$ with uniform ordering is the sequence $(x_1,\ldots,x_r)$ given by $x_i=2^{\HH(X_i\mid X_{i+1},\dots,X_n)-\HH(X_i)}$ for each $i\in [r]$.
    \end{itemize}
\end{definition}

Note that the maximum in the definition of $b_{\textup{entropy}}(H)$ always exists, as the space of random edges with uniform ordering on $H$ is compact. Also, it follows from Proposition~\ref{prop:entropyproperty} that the ratio sequence satisfies $0<x_1\leq\dots\leq x_r=1$ and $\prod_{i=1}^rx_i=2^{\HH(X_1,\ldots, X_r)-r\HH(X_1)}$.

In~\cite{CY}, Chao and Yu proved the following results, which establishes the equivalences between the aforementioned densities.

\begin{proposition}[{\cite[Proposition~5.4]{CY}}]\label{prop:entropic-lagrangian}
    For every $r$-uniform hypergraph $H$, $b_{\textup{entropy}}(H) = b(H)$.
\end{proposition}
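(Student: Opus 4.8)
The plan is to prove the two inequalities $b_{\textup{entropy}}(H)\le b(H)$ and $b_{\textup{entropy}}(H)\ge b(H)$ separately, using the identity $b(H)=r!\,L(H)$ throughout; we may assume $H$ has at least one edge, since otherwise both sides vanish. For the upper bound, fix an arbitrary random edge $(X_1,\dots,X_r)$ with uniform ordering on $H$, write $\mathbf e=\{X_1,\dots,X_r\}$ for the unordered random edge it determines, and put $q_e=\PP(\mathbf e=e)$ and $p_v=\PP(X_1=v)$, so that $\PP(X_i=v)=p_v$ for every $i$ by symmetry. Since $\mathbf e$ is a function of $(X_1,\dots,X_r)$, the chain rule gives $\HH(X_1,\dots,X_r)=\HH(\mathbf e)+\HH(X_1,\dots,X_r\mid\mathbf e)$, and conditioned on $\mathbf e=e$ the tuple takes at most $r!$ values, so by the uniform bound $\HH(X_1,\dots,X_r)\le\HH(\mathbf e)+\log_2 r!$. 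It remains to show $\HH(\mathbf e)-r\HH(X_1)\le\log_2 L(H)$. Symmetry forces the conditional law of $(X_1,\dots,X_r)$ given $\mathbf e=e$ to be uniform over the $r!$ orderings of $e$ (an $S_r$-invariant distribution is uniform on each orbit of the coordinate-permutation action), hence $\sum_{e\ni v}q_e=r\,p_v$ for each vertex $v$. Using this, $r\HH(X_1)=-r\sum_v p_v\log_2 p_v=-\sum_e q_e\log_2\prod_{v\in e}p_v$, and therefore $\HH(\mathbf e)-r\HH(X_1)=\sum_e q_e\log_2\big(\prod_{v\in e}p_v\,/\,q_e\big)$, which by Jensen's inequality for the concave function $\log_2$ is at most $\log_2\big(\sum_e\prod_{v\in e}p_v\big)\le\log_2 L(H)$, the last inequality because $(p_v)_v$ is feasible for the Lagrangian. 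Combining with $\HH(X_1,\dots,X_r)\le\HH(\mathbf e)+\log_2 r!$ gives $2^{\HH(X_1,\dots,X_r)-r\HH(X_1)}\le r!\,L(H)=b(H)$, and maximising over random edges with uniform ordering yields $b_{\textup{entropy}}(H)\le b(H)$.

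For the reverse inequality I would run the equality case of the above backwards. Fix a weighting $(p_v)_{v\in V}$ attaining $L(H)$ and let $S=\{v:p_v>0\}$. Restricting $x\mapsto\sum_e\prod_{v\in e}x_v$ to the face of the simplex supported on $S$, on whose relative interior $p$ is a maximum, the first-order (Lagrange-multiplier) condition reads $\sum_{e\ni v}\prod_{u\in e\setminus\{v\}}p_u=\lambda$ for all $v\in S$, i.e.\ $\sum_{e\ni v}\prod_{u\in e}p_u=\lambda p_v$; summing over $v\in S$ and using $\sum_e\prod_{v\in e}p_v=L(H)$ shows $\lambda=rL(H)$. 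Now let $\mathbf e$ be the random edge with $\PP(\mathbf e=e)=\prod_{v\in e}p_v/L(H)$ and let $(X_1,\dots,X_r)$ be a uniformly random ordering of $\mathbf e$; this tuple is symmetric by construction, and the stationarity identity gives $\PP(X_1=v)=\tfrac{1}{r}\sum_{e\ni v}\PP(\mathbf e=e)=p_v$, so $\HH(X_1)=-\sum_v p_v\log_2 p_v$. Expanding $\HH(\mathbf e)$ and regrouping the $\sum_{v\in e}\log_2 p_v$ contributions by vertex (again using $\sum_{e\ni v}\prod_u p_u=rL(H)p_v$) yields $\HH(\mathbf e)=\log_2 L(H)+r\HH(X_1)$; adding $\log_2 r!$ for the uniform ordering, $2^{\HH(X_1,\dots,X_r)-r\HH(X_1)}=r!\,L(H)=b(H)$, so $b_{\textup{entropy}}(H)\ge b(H)$. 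Together with the first part this proves the proposition.

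The one genuinely delicate ingredient is the stationarity condition $\sum_{e\ni v}\prod_{u\in e}p_u=rL(H)p_v$ on the support of an optimal Lagrangian weighting: it is precisely what guarantees that the random edge built from such a weighting has marginals $p_v$, which makes the Jensen estimate above tight on the nose. Everything else — the chain-rule split, the $\log_2 r!$ bound on the conditional entropy of the ordering, and the Jensen step — is routine; one only needs to track the harmless degenerate cases (vertices not in $S$ and edges not contained in $S$ contribute $0$ to every sum involved).
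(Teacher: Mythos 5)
Your argument is correct, and it is essentially the same as Chao and Yu's own proof of their Proposition~5.4: the chain-rule split through the unordered edge, the observation that symmetry forces the conditional ordering to be uniform (giving $\sum_{e\ni v}q_e=r p_v$ and hence the Jensen bound $\HH(\mathbf e)-r\HH(X_1)\le\log_2 L(H)$), and the construction of the tight random edge from a Lagrangian maximiser via the stationarity identity $\sum_{e\ni v}\prod_{u\in e}p_u=rL(H)\,p_v$ on the support. I checked each step — the $S_r$-invariance argument, the Lagrange-multiplier computation $\lambda=rL(H)$, and the bookkeeping around vertices outside the support — and they all hold.
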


\begin{corollary}[{\cite[Corollary~5.6]{CY}}]\label{cor:equiv-to-entropy}
    For every family $\cF$ of $r$-uniform hypergraphs, $\pi(\cF)$ is the supremum of $2^{\HH(X_1,\ldots, X_r)-r\HH(X_1)}$ for any random edge $(X_1,\ldots, X_r)$ with uniform ordering on any $\cF$-hom-free $r$-uniform hypergraph $H$.
\end{corollary}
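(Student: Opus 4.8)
The plan is to derive the corollary directly by combining Lemma~\ref{lemma:homdensity} with Proposition~\ref{prop:entropic-lagrangian}; no genuinely new argument is needed. Write $S$ for the supremum appearing in the statement, i.e.\ the supremum of $2^{\HH(X_1,\ldots,X_r)-r\HH(X_1)}$ taken jointly over all $\cF$-hom-free $r$-uniform hypergraphs $H$ and all random edges $(X_1,\ldots,X_r)$ with uniform ordering on $H$. The goal is to show $S=\pi(\cF)$, which I would split into the two inequalities.

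For $S\leq\pi(\cF)$, I would fix an arbitrary $\cF$-hom-free hypergraph $H$ together with an arbitrary random edge $(X_1,\ldots,X_r)$ with uniform ordering on it. By the definition of entropic density, $2^{\HH(X_1,\ldots,X_r)-r\HH(X_1)}\leq b_{\textup{entropy}}(H)$; by Proposition~\ref{prop:entropic-lagrangian}, $b_{\textup{entropy}}(H)=b(H)$; and by Lemma~\ref{lemma:homdensity}, $b(H)\leq\pi(\cF)$. Taking the supremum over all such $H$ and all such random edges then gives $S\leq\pi(\cF)$.

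For the reverse inequality $S\geq\pi(\cF)$, I would use that Lemma~\ref{lemma:homdensity} supplies, for every $\eps>0$, an $\cF$-hom-free hypergraph $H$ with $b(H)>\pi(\cF)-\eps$. Applying Proposition~\ref{prop:entropic-lagrangian} gives $b(H)=b_{\textup{entropy}}(H)$, and since the maximum defining $b_{\textup{entropy}}(H)$ is attained (the space of random edges with uniform ordering on $H$ being compact, as remarked after the definition), there is a random edge $(X_1,\ldots,X_r)$ with uniform ordering on $H$ for which $2^{\HH(X_1,\ldots,X_r)-r\HH(X_1)}=b(H)>\pi(\cF)-\eps$. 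Hence $S>\pi(\cF)-\eps$ for every $\eps>0$, so $S\geq\pi(\cF)$, completing the proof.

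Since all the substantive content is already packaged in the cited Lemma~\ref{lemma:homdensity} and Proposition~\ref{prop:entropic-lagrangian}, I do not expect any real obstacle here; the only point that deserves a word of care is that the supremum defining $\pi(\cF)$ need not be attained, which is why the second half of the argument is phrased with an $\eps$ rather than as a single chain of equalities.
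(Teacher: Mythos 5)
Your proof is correct and follows the only sensible route, namely substituting the identity $b_{\textup{entropy}}(H)=b(H)$ from Proposition~\ref{prop:entropic-lagrangian} into the supremum characterisation of $\pi(\cF)$ from Lemma~\ref{lemma:homdensity}, which is exactly how the cited corollary is obtained. Your caution about the supremum in Lemma~\ref{lemma:homdensity} not being attained, and handling this with an $\eps$, is appropriate and matches the intended argument.
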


\subsection{Partial hypergraphs}\label{sec:partial-hypergraph}
In this subsection, we recall Chao and Yu's \cite{CY} definition of partial hypergraphs and collect several of their useful results.

\begin{definition}[Partial hypergraphs]
\mbox{}
\begin{itemize}
    \item A hypergraph $F$ is a \emph{simplicial complex} if for any edge $e\in E(F)$ and any $e'\subset e$, we have $e'\in E(F)$ as well. In particular, every simplicial complex is uniquely determined by its set of maximal edges.
    \item A \textit{partial $r$-hypergraph} is a simplicial complex in which every edge contains at most $r$ vertices.
    \item A \textit{homomorphism} from a partial $r$-hypergraph $F$ to an $r$-uniform hypergraph $H$ is a map $f:V(F)\to V(H)$ such that for any edge $e\in E(F)$, $f$ is injective on $e$ and $f(e)$ is contained in some edge in $E(H)$.
    \item The \textit{extension} of a partial $r$-hypergarph $F$, denoted by $\widetilde{F}$, is the $r$-uniform hypergraph obtained as follows. Let $E'$ be the set of maximal edges in $E(F)$. Then, extend each edge in $E'$ to an edge of size $r$ by adding new extra vertices, with distinct edges not sharing any new vertices.
\end{itemize}
\end{definition}

As an example, for $1\leq i\leq\floor{r/2}$, define $\Delta_{(r-i,i)}^p$ to be the partial $r$-hypergraph with vertex set $[r+1]$ whose set of maximal edges is (see Figure \ref{ptent}) \[\{\{1,2,\ldots,r\},\{1,2,\ldots,i,r+1\},\{i+1,i+2,\ldots,r+1\}\}.\] Then, it is easy to verify that the extension of $\Delta_{(r-i,i)}^p$ is $\Delta_{(r-i,i)}$.

The following result shows that for any partial $r$-hypergraph $F$ and any $r$-uniform hypergraph $H$, a homomorphism from $F$ to $H$ is essentially the same as a homomorphism from $\widetilde{F}$ to $H$. This is helpful later on as instead of considering homomorphisms from $\Delta_{(r-i,i)}$, we can consider homomorphisms from $\Delta_{(r-i,i)}^p$ instead, which are easier to describe.
\begin{proposition}[{\cite[Proposition~6.2]{CY}}]\label{prop:partial-tent-hom}
    Let $F$ be a partial $r$-hypergraph, and let $H$ be an $r$-uniform hypergraph. Then there is a homomorphism from $F$ to $H$ if and only if there is a homomorphism from $\widetilde{F}$ to $H$.
\end{proposition}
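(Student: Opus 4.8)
The plan is to prove the two directions of the equivalence separately, and in both I will only use the combinatorial structure of $\widetilde F$ relative to $F$, nothing special about any particular $F$. The backward direction will come from restricting a homomorphism, and the forward direction from extending one.

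For the ``if'' direction, suppose there is a homomorphism $g$ from $\widetilde F$ to $H$. Since the vertex set of $\widetilde F$ contains $V(F)$, I would take $f$ to be the restriction $g|_{V(F)}$ and check that it is a homomorphism from the partial $r$-hypergraph $F$ to $H$. The point is that any edge $e\in E(F)$ sits inside some maximal edge $e'\in E(F)$, which in turn sits inside its size-$r$ extension $\tilde e'\in E(\widetilde F)$. Since $g(\tilde e')$ is an edge of the $r$-uniform hypergraph $H$, it has exactly $r$ elements, so $g$ restricted to the $r$-set $\tilde e'$ is a bijection onto $g(\tilde e')$, hence injective on $e$; and $f(e)=g(e)\subseteq g(\tilde e')\in E(H)$, so $f(e)$ lies inside an edge of $H$. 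That is exactly what the definition of a homomorphism of a partial $r$-hypergraph requires.

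For the ``only if'' direction — the one that carries the content — suppose there is a homomorphism $f$ from $F$ to $H$. I would build a homomorphism $g$ from $\widetilde F$ to $H$ by keeping $f$ on the old vertices $V(F)$ and defining $g$ on the vertices freshly introduced during the extension. Recall $\widetilde F$ is obtained by enlarging each maximal edge $e'$ of $F$ to a set $\tilde e'=e'\cup N_{e'}$ of size exactly $r$, where the $N_{e'}$ consist of brand-new vertices and are pairwise disjoint. For each maximal edge $e'$, the homomorphism property of $f$ gives that $f$ is injective on $e'$ and that $f(e')$ lies inside some edge $C_{e'}\in E(H)$; the crucial arithmetic is that $|C_{e'}\setminus f(e')|=r-|f(e')|=r-|e'|=|N_{e'}|$, so I can fix a bijection from $N_{e'}$ onto $C_{e'}\setminus f(e')$ and let $g$ agree with it on $N_{e'}$. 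Because the sets $N_{e'}$ are disjoint from each other and from $V(F)$, this defines $g$ unambiguously on all of $V(\widetilde F)$, and then $g(\tilde e')=f(e')\cup(C_{e'}\setminus f(e'))=C_{e'}\in E(H)$ for every maximal edge $e'$, which is precisely what it means for $g$ to be a homomorphism of the $r$-uniform hypergraph $\widetilde F$ into $H$.

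I do not anticipate a genuine obstacle here: the substance is the size count $r-|e'|=|N_{e'}|$ together with the disjointness of the new-vertex sets, which is what makes the pieces of $g$ glue consistently. The only points needing a line of care are that in a simplicial complex every vertex lies in some maximal edge, so that $V(\widetilde F)$ is exactly $V(F)$ together with the $N_{e'}$ and $g$ leaves nothing unassigned, and the trivial degenerate case where $F$ has no edges. Once established, this proposition is exactly what will later let us replace each tent $\Delta_{(r-i,i)}$ by the much simpler partial tent $\Delta_{(r-i,i)}^p$ when analysing homomorphisms into hom-free hypergraphs.
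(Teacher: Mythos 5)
Your proof is correct; both directions are the standard argument (restrict $g$ to $V(F)$ for one direction, use the count $|C_{e'}\setminus f(e')|=r-|e'|=|N_{e'}|$ to extend $f$ bijectively onto the new vertices for the other). The paper itself does not prove this proposition—it is imported verbatim from Chao and Yu's paper as Proposition~6.2 of~\cite{CY}—so there is no in-paper argument to compare against, but your reasoning is precisely what that lemma requires and contains no gaps; the one aside (``every vertex of a simplicial complex lies in some maximal edge'') is not actually needed, since by construction $V(\widetilde F)=V(F)\cup\bigcup_{e'}N_{e'}$ and $g$ is already defined on all of $V(F)$ via $f$, even on isolated vertices.
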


Motivated by a method of Szegedy \cite{Sze15}, Chao and Yu showed that one can sample random homomorphisms from some tree-like structures with high entropy. The relevant definitions and their result is as follows. 

\begin{definition}[Partial forest and forest sequence]
Let $F$ be a partial $r$-hypergraph and let $<$ be a linear order on $V(F)$.
\begin{itemize}
    \item For any vertex $v\in V(F)$, let $M_{F,<}(v)$ be the set of edges in $F$ whose maximum vertex under $<$ is $v$.
    \item $F$ is a \emph{partial forest} with respect to $<$ if for every $v\in V(F)$, there is exactly one edge in $M_{F,<}(v)$ that is maximal in $M_{F,<}(v)$. This edge is denoted by $e_v$.
    \item The \emph{forest sequence} of a partial forest $F$ with respect to $<$ is the sequence $(f_1,\ldots, f_{r})$ where for each $i\in[r]$, $f_{i}$ is the number of vertices $v\in V(F)$ with such that $e_v$ contains exactly $i$ vertices. 
\end{itemize}
\end{definition}

\begin{lemma}[{\cite[Lemma~6.5]{CY}}]\label{lemma:sample-tree}
    Let $(X_1,\ldots, X_r)$ be a random edge with uniform ordering on an $r$-uniform hypergraph $H$, and let $(x_1,\ldots, x_r)$ be its ratio sequence. For any partial forest $F$ with respect to a linear order $<$, if $(f_1,\ldots, f_r)$ is its forest sequence, then one can sample a random homomorphism $(Y_v)_{v\in V(F)}$ from $F$ to $H$ with entropy equal to
    \[|V(F)|\HH(X_1)+\log_2\left(\prod_{i=1}^{r}x_i^{f_{r+1-i}}\right).\]
    Moreover, the random homomorphism can be sampled such that for any edge $e\in E(F)$, the distribution of $(Y_v)_{v\in e}$ is the same as $(X_i)_{r-\abs{e}+1\leq i\leq r}$.
\end{lemma}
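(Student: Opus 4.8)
The plan is to build the random homomorphism one vertex at a time, processing $V(F)$ in increasing order under $<$, while maintaining a distributional invariant that makes each step legitimate and simultaneously pins down the entropy. The one structural input I would isolate first is the following consequence of $F$ being a partial forest: for every vertex $v$ and every edge $e\in E(F)$ whose $<$-maximum is $v$, one has $e\subseteq e_v$. This is immediate, since such an $e$ lies in $M_{F,<}(v)$, of which $e_v$ is by definition the unique maximal element. In particular $e_v$ contains $v$ together with some vertices below it, and $e_v\setminus\{v\}\in E(F)$ because $F$ is a simplicial complex.

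Write $V(F)=\{v_1<\cdots<v_m\}$ with $m=|V(F)|$, and for each $j$ set $s_j=\abs{e_{v_j}}$ and $e_{v_j}=\{u_1<\cdots<u_{s_j-1}<v_j\}$, so that $u_1,\dots,u_{s_j-1}$ have already been assigned when $v_j$ is processed. I would sample $Y_{v_j}$ from the conditional law of $X_r$ given $(X_{r-s_j+1},\dots,X_{r-1})=(Y_{u_1},\dots,Y_{u_{s_j-1}})$, taking the new sample conditionally independent of all previous ones given $(Y_{u_1},\dots,Y_{u_{s_j-1}})$ (for $s_j=1$ this is just an independent copy of $X_r$). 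The key claim, proved by induction on $j$, is that after step $j$, for every edge $e\subseteq\{v_1,\dots,v_j\}$ the tuple $(Y_v)_{v\in e}$ listed in increasing order has the same law as $(X_i)_{r-\abs{e}+1\le i\le r}$. In the inductive step the only nontrivial case is $\max e=v_j$, where $e\subseteq e_{v_j}$ by the structural fact, so it suffices to treat $e=e_{v_j}$ (sub-edge cases then follow from symmetry of $(X_1,\dots,X_r)$); the hypothesis applied to $e_{v_j}\setminus\{v_j\}$ gives that $(Y_{u_1},\dots,Y_{u_{s_j-1}})$ has the law of $(X_{r-s_j+2},\dots,X_r)$, which by symmetry coincides with the law of $(X_{r-s_j+1},\dots,X_{r-1})$, so the conditional law used to draw $Y_{v_j}$ is almost surely well-defined and by construction $(Y_{u_1},\dots,Y_{u_{s_j-1}},Y_{v_j})$ ends up with the law of $(X_{r-s_j+1},\dots,X_r)$. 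Taking $e$ over the maximal edges of $F$ gives the ``moreover'' statement and also shows $(Y_v)_{v\in V(F)}$ is almost surely a homomorphism, since on each edge $e$ its values are the distinct coordinates of a genuine edge of $H$, hence $\abs{e}$ distinct vertices lying in an edge of $H$.

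For the entropy I would run the chain rule along $v_1,\dots,v_m$. By the built-in conditional independence, $\HH(Y_{v_j}\mid Y_{v_1},\dots,Y_{v_{j-1}})=\HH(Y_{v_j}\mid Y_{u_1},\dots,Y_{u_{s_j-1}})$, which by the distributional claim equals $\HH(X_r\mid X_{r-s_j+1},\dots,X_{r-1})$; by exchangeability of the coordinates $X_{r-s_j+1},\dots,X_r$ this equals $\HH(X_{r-s_j+1}\mid X_{r-s_j+2},\dots,X_r)=\HH(X_1)+\log_2 x_{r-s_j+1}$. Summing over $j$ and grouping the terms by the value $s_j=i$, which occurs exactly $f_i$ times by the definition of the forest sequence, yields $|V(F)|\,\HH(X_1)+\sum_{i=1}^r f_i\log_2 x_{r-i+1}$; reindexing $i\mapsto r+1-i$ rewrites the sum as $\log_2\prod_{i=1}^r x_i^{f_{r+1-i}}$, which is the claimed value.

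I do not expect a genuine obstacle here; the care needed is in formulating the inductive invariant precisely enough that each conditional draw is legitimate — this is exactly where the uniqueness of the maximal edge of $M_{F,<}(v)$ and the symmetry of $(X_1,\dots,X_r)$ are both used — and in keeping the index shifts straight through the chain rule and the final reindexing.
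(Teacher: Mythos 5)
This lemma is quoted from Chao and Yu \cite{CY} rather than reproved in the present paper, so there is no in-paper proof to compare against; your proposal -- sequential conditional sampling along the order $<$, resting on the observation that $e_v$ is in fact the \emph{maximum} of $M_{F,<}(v)$ (unique maximal element of a finite poset), together with the distributional invariant that on every edge the $Y$-values have the law of a suffix of $(X_1,\dots,X_r)$, and then the chain rule with the reindexing $i\mapsto r+1-i$ -- is a correct and complete proof, and it follows the Szegedy-style sampling argument the paper attributes to \cite{CY}.
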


\subsection{Reduction to an optimisation problem}\label{sec:reduction}
In this subsection, we put everything together and adapt Chao and Yu's entropy method to reduce the problem of determining $\pi(\mathcal{F}_{r,k})$ to an optimisation problem using the following key lemma.   

\begin{lemma}\label{lemma:alt-tent-ineq}
    Let $1\leq k\leq\floor{r/2}$, let $H$ be an $\mathcal{F}_{r,k}$-hom-free $r$-uniform hypergraph, and let $(X_1,\ldots, X_r)$ be a random edge with uniform ordering on $H$ whose ratio sequence is $0< x_1\leq \cdots\leq  x_r=1$. Then, $x_i + x_j \leq x_{i+j}$ for all $i\leq k$ and $i + j \leq n$.
\end{lemma}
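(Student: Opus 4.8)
The plan is to prove the inequality directly, by producing two random homomorphisms into $H$ via Lemma~\ref{lemma:sample-tree}, combining them with the mixture bound (Lemma~\ref{lemma:mix} with $a=1$), and invoking the $\mathcal{F}_{r,k}$-hom-freeness of $H$ only to guarantee that the two homomorphisms have disjoint supports. Throughout write $E := (r+1)\HH(X_1) + \log_2\prod_{s=1}^{r} x_s$ and $m := r - i - j \ge 0$; note that by the definition of the ratio sequence, $\HH(X_1)+\log_2 x_{i+j}$ equals the conditional entropy of one coordinate of $(X_1,\dots,X_r)$ given $m$ of the others. I would take $F_1$ to be the partial $r$-hypergraph on $[r+1]$ with maximal edges $\{1,\dots,r\}$ and $C_1:=\{i+1,i+2,\dots,r,r+1\}$ (so $C_1$ meets $\{1,\dots,r\}$ in a \emph{suffix} of length $r-i$), and $F_2$ the one with maximal edges $\{1,\dots,r\}$ and $C_2:=\{1,2,\dots,r-j,r+1\}$ (so $C_2$ meets $\{1,\dots,r\}$ in a \emph{prefix} of length $r-j$). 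Both are partial forests for the order $1<2<\dots<r+1$, with $e_v=\{1,\dots,v\}$ for $v\le r$ and $e_{r+1}$ equal to $C_1$ (resp.\ $C_2$), of size $r-i+1$ (resp.\ $r-j+1$); working out the forest sequences, the vertex $r+1$ contributes a factor $x_i$ to the product for $F_1$ and $x_j$ for $F_2$, so Lemma~\ref{lemma:sample-tree} yields random homomorphisms $Y=(Y_v)_{v\in[r+1]}\colon F_1\to H$ and $Y'=(Y'_v)_{v\in[r+1]}\colon F_2\to H$ with $\HH(Y)=E+\log_2 x_i$ and $\HH(Y')=E+\log_2 x_j$, and, by the ``moreover'' clause, such that for every edge $e$ of $F_1$ (resp.\ $F_2$) the tuple $(Y_v)_{v\in e}$ (resp.\ $(Y'_v)_{v\in e}$), indexed in increasing order, is distributed as $(X_{r-|e|+1},\dots,X_r)$.

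\textbf{Disjoint supports (where the tent enters).} Suppose some $(z_1,\dots,z_{r+1})$ lies in $\supp(Y)\cap\supp(Y')$. Being a homomorphism from $F_1$, it satisfies: $\{z_1,\dots,z_r\}$ is an edge of $H$, $z_{r+1}\notin\{z_{i+1},\dots,z_r\}$, and $\{z_{i+1},\dots,z_r,z_{r+1}\}$ lies in an edge; being a homomorphism from $F_2$, it also satisfies $z_{r+1}\notin\{z_1,\dots,z_{r-j}\}$ and $\{z_1,\dots,z_{r-j},z_{r+1}\}$ lies in an edge. Since $i+j\le r$ (so $i\le r-j$), these force $z_{r+1}\notin\{z_1,\dots,z_r\}$, and, restricting the last set, $\{z_1,\dots,z_i,z_{r+1}\}$ lies in an edge. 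Hence $t\mapsto z_t$ ($t\in[r]$), $r+1\mapsto z_{r+1}$, is a homomorphism from $\Delta^p_{(r-i,i)}$ to $H$; as $i\le k$, its extension $\Delta_{(r-i,i)}$ lies in $\mathcal{F}_{r,k}$, so by Proposition~\ref{prop:partial-tent-hom} this contradicts $H$ being $\mathcal{F}_{r,k}$-hom-free. Therefore $\supp(Y)\cap\supp(Y')=\emptyset$, and Lemma~\ref{lemma:mix} with $a=1$ produces a mixture $Z=(Z_v)_{v\in[r+1]}$ of $Y$ and $Y'$ with $2^{\HH(Y)}+2^{\HH(Y')}\le 2^{\HH(Z)}$.

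\textbf{Bounding the mixture's entropy, and conclusion.} Let $Q:=\{i+1,i+2,\dots,r-j\}$, a set of $m$ positions contained in both $C_1\setminus\{r+1\}$ and $C_2\setminus\{r+1\}$ (when $m=0$ it is empty). Since $Q\cup\{r+1\}$ is an edge of both $F_1$ and $F_2$, Step~1 gives that $(Y_v)_{v\in Q\cup\{r+1\}}$ and $(Y'_v)_{v\in Q\cup\{r+1\}}$ are each distributed as $(X_{i+j},\dots,X_r)$, so the same holds for $Z$; in particular $\HH(Z_{r+1}\mid (Z_v)_{v\in Q})=\HH(X_1)+\log_2 x_{i+j}$. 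Likewise $(Z_1,\dots,Z_r)$ is distributed as $(X_1,\dots,X_r)$. As $Q\subseteq[r]$, the chain rule and the fact that conditioning reduces entropy give
\[
\HH(Z)=\HH(Z_1,\dots,Z_r)+\HH(Z_{r+1}\mid Z_1,\dots,Z_r)\le \HH(X_1,\dots,X_r)+\HH(Z_{r+1}\mid (Z_v)_{v\in Q})=E+\log_2 x_{i+j}.
\]
Combining with the previous step, $2^{E}(x_i+x_j)=2^{\HH(Y)}+2^{\HH(Y')}\le 2^{\HH(Z)}\le 2^{E}x_{i+j}$, and dividing by $2^{E}>0$ yields $x_i+x_j\le x_{i+j}$, as required.

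\textbf{Main obstacle.} I expect the crux to be the choice of the two gadgets: $F_1$ must attach its new vertex along a \emph{suffix} of $\{1,\dots,r\}$ and $F_2$ along a \emph{prefix}. This asymmetry is exactly what makes the supports disjoint --- had both used suffixes, one forest would refine the other and the supports would nest --- while the fact that both attaching sets nonetheless contain the common window $Q$ of size $m$ is what keeps the conditional distribution of $Z_{r+1}$ under control in the last step. Checking that $F_1,F_2$ are genuine partial forests (so that Lemma~\ref{lemma:sample-tree} applies with the stated entropies) and dealing with the degenerate case $m=0$ (where $Q=\emptyset$ and $x_{i+j}=x_r=1$) are the remaining routine points; note in particular that the argument never needs $i\le j$, so it covers the whole stated range $i\le k$, $i+j\le r$ uniformly.
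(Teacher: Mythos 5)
Your proposal is correct and follows essentially the same route as the paper's own proof: the two partial forests you call $F_1,F_2$ are exactly the paper's $F^{(1)},F^{(2)}$ (up to relabelling $v_\ell\leftrightarrow\ell$, $w\leftrightarrow r+1$), the disjoint-supports argument via $\Delta^p_{(r-i,i)}$ and Proposition~\ref{prop:partial-tent-hom} is the same, and the mixture bound with $a=1$ together with the chain-rule/drop-conditions estimate mirror the paper exactly.
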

\begin{proof}
    Let $V=\{v_1,\ldots, v_r,w\}$ and equip it with the linear order $v_1<\cdots <v_r<w$. Fixed any $i\leq k$ and $i+j\leq r$. Let $F^{(1)}$ be the partial $r$-hypergraph containing exactly two maximal edges $\{v_1,\ldots, v_r\}$ and $\{v_{i+1},\ldots,v_r, w\}$. Let $F^{(2)}$ be the partial $r$-hypergraph exactly two maximal edges $\{v_1,\ldots, v_r\}$ and $\{v_1,\ldots, v_{r-j}, w\}$. It is easy to check from definition that both $F^{(1)}$ and $F^{(2)}$ are partial forests with respect to $<$. Moreover, if $(f^{(1)}_1,\ldots,f^{(1)}_r)$ is the forest sequence of $F^{(1)}$, then $v_\ell$ contributes 1 to $f^{(1)}_\ell$ for every $\ell\in[r]$, and $w$ contributes 1 to $f^{(1)}_{r+1-i}$. Thus, $f^{(1)}_\ell=1$ for every $\ell\in[r]$ except $f^{(1)}_{r+1-i}=2$. Similarly, if $(f^{(2)}_1,\ldots,f^{(2)}_r)$ is the forest sequence of $F^{(2)}$, then $f^{(2)}_\ell=1$ for every $\ell\in[r]$ except $f^{(2)}_{r+1-j}=2$.

    Let $(Y^{(1)}_v)_{v\in V}$ and $(Y^{(2)}_v)_{v\in V}$ be the random homomorphisms from $F^{(1)}$ and $F^{(2)}$ to $H$ given by Lemma~\ref{lemma:sample-tree}, respectively.
    Note that if some tuple $(y_v)_{v\in V}$ of vertices in $V(H)$ is in the supports of both $(Y^{(1)}_v)_{v\in V}$ and $(Y^{(2)}_v)_{v\in V}$, then $(y_v)_{v\in V}$ corresponds to a homomorphism from $F^{(1)}\cup F^{(2)}$ to $H$.
    However, $\Delta^p_{(r-i,i)}$ is a subgraph of $F^{(1)}\cup F^{(2)}$, so there is a homomorphism from $\Delta^p_{(r-i,i)}$ to $H$ as well. Hence, by Proposition~\ref{prop:partial-tent-hom}, there is a homomorphism from $\Delta_{(r-i,i)}$ to $H$, a contradiction. Therefore, the two random homomorphisms $(Y^{(1)}_v)_{v\in V}$ and $(Y^{(2)}_v)_{v\in V}$ have disjoint support.

    This allows us to apply Lemma~\ref{lemma:mix} with $a=1$ to obtain a mixture $(Z_v)_{v\in V}$ of $(Y^{(1)}_v)_{v\in V}$ and $(Y^{(2)}_v)_{v\in V}$ such that, along with Lemma~\ref{lemma:sample-tree}, it satisfies
    \[2^{\HH((Z_v)_{v\in V})}\geq2^{\HH(Y^{(1)}_v)_{v\in V}}+2^{\HH(Y^{(2)}_v)_{v\in V}}\geq2^{(r+1)\HH(X_1)}(x_i+x_j)\prod_{\ell=1}^rx_\ell.\]
    Moreover, observe that both $F^{(1)}$ and $F^{(2)}$ contain the edges $\{v_1,\ldots, v_r\}$ and $\{v_{i+1},\ldots, v_{r-j},w\}$. Therefore, by Lemma~\ref{lemma:sample-tree}, $(Y^{(1)}_{v_1},\ldots, Y^{(1)}_{v_r})$ and $(Y^{(2)}_{v_1},\ldots, Y^{(2)}_{v_r})$ both have the same distributions as $(X_1,\ldots, X_r)$, so $(Z_{v_1},\ldots, Z_{v_r})$ has the same distribution as $(X_1,\ldots, X_r)$ as well by the definition of mixture. Similarly, $(Z_w, Z_{v_{i+1}},\ldots, Z_{v_{r-j}})$ has the same distribution as $(X_{i+j}, \ldots, X_r)$.
    As a consequence, using Proposition~\ref{prop:entropyproperty} and the definition of ratio sequence, we get
    \begin{align*}
        \HH\left((Z_v)_{v\in V}\right)&\leq\HH(Z_{v_1},\ldots, Z_{v_r})+\HH(Z_w\mid Z_{v_{i+1}},\ldots, Z_{v_{r-j}})\\
        &=\HH(X_1,\ldots, X_{r})+\HH(X_{i+j}\mid X_{i+j+1},\ldots, X_{r})\\
        &=(r+1)\HH(X_1)+\log_2\left(x_{i+j}\prod_{\ell=1}^rx_\ell\right).
    \end{align*}
    
    Combining the two inequalities above gives 
    \[2^{(r+1)\HH(X_1)}(x_i+x_j)\prod_{\ell=1}^rx_\ell\leq2^{(r+1)\HH(X_1)}x_{i+j}\prod_{\ell=1}^rx_\ell,\]
    from which it follows that $x_i+x_j\leq x_{i+j}$.
\end{proof}

With Lemma~\ref{lemma:alt-tent-ineq} in mind, for every $1\leq k\leq\floor{r/2}$, define $\mathcal{X}_{r,k}\subset[0,1]^r$ to be 
\[\mathcal{X}_{r,k}=\left\{0<x_1\leq\cdots\leq x_r=1\mid x_i+x_j\leq x_{i+j}\text{ for every }i\in[k]\text{ and }i\leq j\leq r-i\right\}.\]
We can now reduce the Tur\'an density problem to an optimisation problem over $\mathcal{X}_{r,k}$. 
\begin{proposition}\label{prop:main-entropy-ver}
     Let $1\leq k\leq\floor{r/2}$, and let $H$ be an $\mathcal{F}_{r,k}$-hom-free $r$-uniform hypergraph, then 
    \[b(H)=b_{\textup{entropy}}(H)\leq\max\left\{\prod_{i=1}^rx_i\hspace{1.5mm}\middle|\hspace{1.5mm}(x_1,\ldots,x_r)\in\mathcal{X}_{r,k}\right\}.\]
\end{proposition}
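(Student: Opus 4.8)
The plan is to combine Proposition~\ref{prop:entropic-lagrangian} with Lemma~\ref{lemma:alt-tent-ineq} and the basic properties of the ratio sequence; essentially all the work has already been done, so the proof is a matter of assembling the pieces. The equality $b(H)=b_{\textup{entropy}}(H)$ is exactly Proposition~\ref{prop:entropic-lagrangian}, so it remains to bound $b_{\textup{entropy}}(H)$ from above.

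First I would take a random edge $(X_1,\ldots,X_r)$ with uniform ordering on $H$ that attains the maximum in the definition of $b_{\textup{entropy}}(H)$; such a random edge exists since the space of random edges with uniform ordering on $H$ is compact. Let $(x_1,\ldots,x_r)$ be its ratio sequence. As noted immediately after the definition of the ratio sequence, the entropy inequalities of Proposition~\ref{prop:entropyproperty} give $0<x_1\leq\cdots\leq x_r=1$ (positivity of $x_1$ using that $\HH(X_1)<\infty$, which holds because $H$ is finite) together with $\prod_{i=1}^r x_i = 2^{\HH(X_1,\ldots,X_r)-r\HH(X_1)} = b_{\textup{entropy}}(H)$.

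Next, since $H$ is $\mathcal{F}_{r,k}$-hom-free, Lemma~\ref{lemma:alt-tent-ineq} applies and yields $x_i+x_j\leq x_{i+j}$ for all $i\leq k$ with $i+j\leq r$. In particular this holds for all $i\in[k]$ and all $i\leq j\leq r-i$, so $(x_1,\ldots,x_r)\in\mathcal{X}_{r,k}$ by definition. Hence
\[
b_{\textup{entropy}}(H)=\prod_{i=1}^r x_i\leq\max\left\{\prod_{i=1}^r x_i\;\middle\vert\;(x_1,\ldots,x_r)\in\mathcal{X}_{r,k}\right\},
\]
which is the desired inequality. (That the right-hand side is genuinely a maximum and not merely a supremum can be seen by passing to the closure of $\mathcal{X}_{r,k}$, which is compact, noting that the product is continuous and that the point $(1/r,2/r,\ldots,1)$ lies in $\mathcal{X}_{r,k}$ with positive product, so the maximum over the closure is positive and therefore realised at a point with positive first coordinate, i.e.\ a point of $\mathcal{X}_{r,k}$ itself.)

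There is no real obstacle here; the only steps requiring a moment of care are checking that the ratio sequence satisfies \emph{all} the defining inequalities of $\mathcal{X}_{r,k}$ — which is precisely the content of Lemma~\ref{lemma:alt-tent-ineq}, once one observes that the index constraint ``$i\leq k$, $i+j\leq r$'' of that lemma subsumes the constraint ``$i\in[k]$, $i\leq j\leq r-i$'' appearing in the definition of $\mathcal{X}_{r,k}$ — and confirming the attainment of the maximum on the right-hand side.
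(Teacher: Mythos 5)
Your proof is correct and follows essentially the same route as the paper: apply Proposition~\ref{prop:entropic-lagrangian} for the equality, use Proposition~\ref{prop:entropyproperty} to express $b_{\textup{entropy}}(H)$ as $\prod_i x_i$ for the ratio sequence of an optimal random edge, and invoke Lemma~\ref{lemma:alt-tent-ineq} to place that sequence in $\mathcal{X}_{r,k}$. The only (minor) difference is that you make explicit the attainment of the maximum on the right-hand side via a closure argument, whereas the paper simply asserts compactness of $\mathcal{X}_{r,k}$; your treatment is slightly more careful, since $\mathcal{X}_{r,k}$ as written has an open constraint $x_1>0$.
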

\begin{proof}
     Let $(X_1,\ldots,X_r)$ be any random edge with uniform ordering on $H$ and let $0< x_1\leq \cdots\leq  x_n=1$ be its ratio sequence. Recall that $b(H)=b_{\textup{entropy}}(H)$ by Proposition~\ref{prop:entropic-lagrangian} and $\HH(X_1,\dots,X_r)- r\HH(X_1)=\log_2(\prod_{i=1}^rx_i)$ by Proposition~\ref{prop:entropyproperty}. The statement then follows from the definition of entropic density as $(x_1,\ldots, x_n)\in\mathcal{X}_{r,k}$ by Lemma~\ref{lemma:alt-tent-ineq}.
\end{proof}

\section{Proof of Theorem~\ref{thm:maindensity}}\label{sec:main}
In this section, we prove Theorem~\ref{thm:maindensity} by solving the optimisation problem obtained at the end of Section~\ref{sec:prelim}. Specifically, we prove the following.  
\begin{theorem}\label{thm:main}
For every $r\geq 4$, let $k=\ceil{r/e}$, then 
\[\max\left\{\prod_{i=1}^rx_i\hspace{1.5mm}\middle|\hspace{1.5mm}(x_1,\ldots,x_r)\in\mathcal{X}_{r,k}\right\}=\frac{r!}{r^r},\]
and the unique tuple achieving the maximum is given by $x_i=i/r$ for every $i\in[r]$.
\end{theorem}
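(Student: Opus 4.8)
The plan is to prove the two inequalities separately and then extract uniqueness. The lower bound is immediate: the tuple $x^*=(1/r,2/r,\dots,1)$ lies in $\mathcal{X}_{r,k}$ since $x^*_i+x^*_j=(i+j)/r=x^*_{i+j}$ for all $i,j$, and $\prod_{i=1}^{r}x^*_i=r!/r^r$. For the matching upper bound I would replace the product by a single \emph{linear} quantity using concavity of $\log$: as $\log$ is strictly concave, its tangent line at $i/r$ gives $\log x_i\le\log(i/r)+\tfrac{r}{i}\bigl(x_i-\tfrac{i}{r}\bigr)$ for all $x_i>0$, with equality iff $x_i=i/r$; summing over $i$,
\[
\log\Bigl(\prod_{i=1}^{r}x_i\Bigr)\ \le\ \log\Bigl(\frac{r!}{r^r}\Bigr)+r\Bigl(\sum_{i=1}^{r}\frac{x_i}{i}-1\Bigr).
\]
Thus everything reduces to the linear inequality $\sum_{i=1}^{r}x_i/i\le 1$ on $\mathcal{X}_{r,k}$: it yields $\prod x_i\le r!/r^r$, strict concavity of $\log$ forces $x_i=i/r$ whenever equality holds, and since the closure of $\mathcal{X}_{r,k}$ is compact while its boundary tuples with $x_1=0$ have product $0$, the maximum is attained, uniquely at $x^*$.

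The core of the argument is therefore the linear bound $\sum_{i=1}^{r}x_i/i\le 1$ over $\mathcal{X}_{r,\ceil{r/e}}$, a finite linear program that I would solve by exhibiting a nonnegative combination of the defining constraints equal to $x_r-\sum_i x_i/i$. Combinatorially: from the superadditivity constraints $x_a+x_b\le x_{a+b}$ (valid whenever $\min(a,b)\le k$ and $a+b\le r$) one derives, for every partition $N$ of $r$ having \emph{at most one part of size exceeding $k$}, the inequality $\sum_{a\in N}x_a\le x_r=1$, by repeatedly merging the unique ``large block'' of the partition with a part of size $\le k$ --- each merge being a legal application of a constraint. Averaging these inequalities with respect to a probability distribution $\mu$ on such partitions produces $\sum_i g_i x_i\le 1$ where $g_i=\EE_\mu[\#\{\text{parts of }N\text{ equal to }i\}]$, so it suffices to construct $\mu$ with $g_i=1/i$ for every $i\in[r]$ (this is consistent with the identity $\sum_i i g_i=r$ that any such average automatically satisfies).

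The numbers $1/i$ are exactly the expected $i$-cycle counts of a uniformly random permutation of $[r]$, whose cycle type is a partition of $r$ --- but such a permutation may have several cycles longer than $k$, whereas each admissible $N$ has at most one part exceeding $k$. Since for $i>k$ an admissible partition has at most one $i$-part and these events are mutually exclusive over $i>k$, any valid $\mu$ satisfies $\PP_\mu[N\text{ has a part}>k]=\sum_{i=k+1}^{r}1/i=H_r-H_k$, which forces $H_r-H_k\le 1$. This is where the constant $e$ enters: $k=\ceil{r/e}\ge r/e$ gives $r/k\le e$, hence $H_r-H_k\le\ln(r/k)\le\ln e=1$. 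I would then build $\mu$ by first choosing the (possibly absent) large part with the prescribed probabilities $1/i$ for $k<i\le r$, and otherwise no large part, and then completing each outcome by a partition of the leftover amount into parts $\le k$; these completions have to be chosen so that the small-part marginals come out to exactly $1/j$ for $j\le k$, which is numerically consistent because the expected leftover size equals $k=\sum_{j\le k}j\cdot\tfrac1j$.

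I expect the genuine difficulty to be this last step: proving that the required completions exist with the exact marginals and can be assembled into an honest probability distribution, which is the computational heart of the matter and where the arithmetic of the harmonic numbers (not merely the bound $H_r-H_k\le 1$) must be controlled. The accompanying remark that $\ceil{r/e}$ cannot be lowered in general is handled separately and more easily, by producing, for the relevant $r$ with $k<\ceil{r/e}$, explicit tuples of $\mathcal{X}_{r,k}$ obtained from $x^*$ by decreasing $x_1$ slightly and rebalancing the larger coordinates, whose product exceeds $r!/r^r$.
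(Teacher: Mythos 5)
Your reduction is a genuinely different route from the paper's, and it is worth noting that it is sound in principle. The paper proves the statement by first showing (via a perturbation argument, their Lemma~3.2) that any maximiser is \emph{symmetric}, i.e.\ $x_j+x_{r-j}=1$ for $j\le k$, then introducing a delicate ``segment'' decomposition to show (their Lemma~3.7) that the initial uniform run of the maximiser has length at least $k+1$, and finally bounding $\log\bigl(\prod x_i / (r!/r^r)\bigr)$ directly by an integral comparison. Your plan instead linearises the objective via the tangent line to $\log$ at $x_i=i/r$, reducing everything to the single linear inequality $\sum_i x_i/i\le 1$ on $\mathcal X_{r,k}$, and then proposes to certify that inequality by LP duality, i.e.\ by averaging the ``merged-partition'' inequalities $\sum_{a\in N}x_a\le x_r$ over a probability measure $\mu$ on partitions with prescribed marginals $g_i=1/i$. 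This linearisation is in fact \emph{equivalent} to the product bound, not merely sufficient for it: since $\sum_i\log x_i$ is concave and $\mathcal X_{r,k}$ is convex, $x^*=(i/r)_i$ maximises $\sum_i\log x_i$ over $\mathcal X_{r,k}$ if and only if the gradient $\nabla\bigl(\sum_i\log x_i\bigr)|_{x^*}=(r/i)_i$ lies in the normal cone at $x^*$, which is exactly the statement that the linear form $\sum_i x_i/i$ (a positive multiple of that gradient) is maximised at $x^*$. So the approach is not over-reaching.

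The genuine gap is exactly where you flag it: you do not exhibit the measure $\mu$. Your observations --- that the large-part marginals force $\sum_{i>k}g_i=H_r-H_k\le 1$, that this is precisely what $k\ge r/e$ guarantees, and that the expected leftover equals $\sum_{j\le k}j\cdot(1/j)=k$ --- are all necessary consistency checks, but they do not by themselves yield a distribution over partitions of $r$ with at most one part exceeding $k$ whose small-part marginals come out exactly to $1/j$ for every $j\le k$: one needs a genuine transportation/coupling argument to assemble the conditional distributions of the small-part profile given the large part (or the absence of one), and nothing in the sketch rules out an obstruction at this step. You yourself say this is ``the computational heart of the matter,'' so the proposal as written is an outline with a real hole in it rather than a proof. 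For contrast, the paper's proof avoids any dual certificate entirely: the symmetrisation and segment lemmas pin down the shape of a maximiser directly, and the final integral estimate then only needs to be verified for tuples of that special shape, not for all of $\mathcal X_{r,k}$. Your secondary remark about showing $\lceil r/e\rceil$ is sharp by perturbing $x^*$ is fine and essentially matches the paper's Lemma~3.9, but that is a separate, easier statement.
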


We first show that Theorem \ref{thm:main} implies Theorem \ref{thm:maindensity}.
\begin{proof}[Proof of Theorem \ref{thm:maindensity}]
    First, we have $\pi(\cF_{r,k})\geq r!/r^r$ by Lemma \ref{lemma:homdensity} as the $r$-uniform hypergraph consisting of just a single edge does not contain the homomorphic image of any $F\in\cF_{r,k}$, and has blowup density $r!/r^r$. For the reverse inequality, if $H$ is $\cF_{r,k}$-hom-free, then by Proposition~\ref{prop:main-entropy-ver} and Theorem~\ref{thm:main}, we have $b(H)=b_{\textup{entropy}}(H)\leq r!/r^r$.
    Thus, by Lemma~\ref{lemma:homdensity}, $\pi(\cF_{n,k})=\sup\{b(H)\mid H\text{ is }\cF_{r,k}\text{-hom-free}\}\leq r!/r^r$.
\end{proof}

The rest of the section is devoted to proving Theorem~\ref{thm:main}. Observe that the maximum of $\prod_{i=1}^rx_i$ over $\mathcal{X}_{r,k}$ can always be attained as $\mathcal{X}_{r,k}$ is compact. Note that if $k=\floor{r/2}$, then Theorem~\ref{thm:main} follows from its counterpart Theorem 7.1 in \cite{CY}, so for the rest of this section we will assume $k=\ceil{r/e}\leq\floor{r/2}-1$, unless stated otherwise. We begin with some preliminary results and definitions.
\begin{lemma}\label{symmetry}
If $(x_1,\ldots,x_r)\in\mathcal{X}_{r,k}$ maximizes $\prod_{i=1}^rx_i$, then $x_j + x_{n-j} = 1$ for all $j\in[k]$.
\end{lemma}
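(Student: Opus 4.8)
## Proof Proposal for Lemma \ref{symmetry}

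The plan is to argue by a local perturbation: suppose $(x_1,\ldots,x_r)$ is a maximizer with $x_j + x_{r-j} < 1$ for some $j \in [k]$ (note the constraints only give $x_j + x_{r-j} \le x_r = 1$, so strict inequality is the only alternative), and derive a contradiction by finding a nearby feasible point with strictly larger product. First I would fix the smallest such index $j$, and consider nudging $x_{r-j}$ upward by a small $\delta > 0$. The constraint $x_j + x_{r-j} \le 1$ has slack, so this is not immediately violated; the danger is the other constraints involving $x_{r-j}$, namely the monotonicity constraints $x_{r-j-1} \le x_{r-j} \le x_{r-j+1}$ and the superadditivity constraints $x_i + x_{(r-j)-i} \le x_{r-j}$ (which only help when we increase $x_{r-j}$) and $x_i + x_{r-j} \le x_{i + r - j}$ for $i \in [k]$ with $i + r - j \le r$, i.e.\ $i \le j$. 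This last family is the real constraint: increasing $x_{r-j}$ forces us to also increase $x_{i+r-j}$ for $1 \le i \le j$, which in turn may cascade further up the sequence. Since the top entry $x_r = 1$ is fixed, a naive upward push could eventually collide with it.

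The key idea to handle this is to push a whole block of coordinates up simultaneously and compensate by pushing a corresponding block down, keeping the product from decreasing via an AM–GM / convexity comparison. Concretely, I expect the right move is: increase each of $x_{r-j}, x_{r-j+1}, \ldots$ appropriately while decreasing $x_1, \ldots, x_j$ (or some lower block) so that all superadditivity constraints $x_i + x_{r-j} \le x_{i+r-j}$ are preserved and the sum of logarithms does not decrease. Alternatively — and this may be cleaner — one shows that at a maximizer the constraints $x_j + x_{r-j} \le 1$ for $j \in [k]$ must all be tight by a dimension/Lagrange-multiplier count combined with the observation that $x_i = i/r$ is conjectured to be the unique optimum (so in particular it does satisfy $x_j + x_{r-j} = 1$), but since we are proving Theorem \ref{thm:main} and this lemma is a step toward it, circular reasoning must be avoided. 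So I would stick with the direct perturbation argument.

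Here is the perturbation I would try to make precise. Let $j$ be minimal with $x_j + x_{r-j} < 1$. Define a one-parameter family: for small $t>0$, set $x'_\ell = x_\ell + t$ for $\ell$ in the "upper orbit" $\{r-j, r-j+1, \ldots, r-1\}$ reachable from $r-j$ by repeatedly adding elements of $[j]$ and staying $< r$, and set $x'_\ell = x_\ell - t$ for $\ell$ in a matching "lower orbit" near the bottom, leaving everything else fixed; then verify (i) monotonicity is preserved for small $t$ since all inequalities among consecutive fixed-vs-moved coordinates were strict or become strict in the favorable direction — this needs a short case check — (ii) every superadditivity constraint $x_i + x_{a} \le x_{a+i}$ with $i \le k$ either has both sides moved by the same amount, or has its slack increased, or had slack to begin with by minimality of $j$, and (iii) the derivative of $\sum_\ell \log x'_\ell$ at $t = 0$ has a sign we can control, or more robustly, $\prod x'_\ell > \prod x_\ell$ for small $t$ by a direct convexity estimate comparing $\log(x+t) + \log(y-t)$ against $\log x + \log y$ when $x \le y$ (which fails in general!), so in fact I would choose the moved-up block to consist of the \emph{largest} coordinates and the moved-down block the \emph{smallest}, making the trade favorable.

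The main obstacle I anticipate is exactly point (iii): a single swap of "increase a large coordinate, decrease a small coordinate" \emph{increases} the product (since $\partial/\partial t[\log(x+t)+\log(y-t)]|_0 = 1/x - 1/y > 0$ when $x < y$), which is good — but we are forced by the constraints to move \emph{several} coordinates up for each one we move down (the cascade $x_{r-j} \to x_{r-j+i}$ for $i \in [j]$), and we must check the net effect on the product is still an increase, or arrange the down-moves to also come in a block so the counting balances. Getting the bookkeeping of which coordinates must move, by how much, and in which direction — so that feasibility is maintained \emph{and} the product strictly increases — is the delicate part; everything else (compactness giving a maximizer, the properties of entropy, the constraint structure) is routine. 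A cleaner alternative worth attempting first: show directly that if $x_j + x_{r-j} < 1$ then replacing $x_{r-j}, x_{r-j+1}, \ldots, x_r$ is not needed — instead \emph{decrease} $x_j$ slightly (and the block $x_1, \ldots, x_j$ proportionally) to gain nothing, or rather, scale the bottom. If none of these single-move arguments close, the block-swap with careful index accounting is the fallback, and I would present that.
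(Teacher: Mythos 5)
Your plan correctly identifies the right perturbation (push $x_{r-j}$ up) and the right obstruction (the constraints $x_i + x_{r-j} \le x_{i+r-j}$ for $i\in[j]$, which threaten a cascade toward $x_r=1$), but you never resolve that obstruction: you openly leave the ``bookkeeping'' of a block-swap as the delicate, unfinished part, so this is a plan with an acknowledged hole rather than a proof. The paper closes exactly this gap by a clean observation you narrowly miss. Do the proof by induction on $j$. Having established $x_i + x_{r-i} = 1$ for all $i \in [j-1]$, set $x_{r-j}' = 1 - x_j$ and leave every other coordinate alone. For $i=j$ the constraint $x_j' + x_{r-j}' \le 1$ holds by construction, and for $i\in[j-1]$ the inductive hypothesis gives $x_{r-j+i} = 1 - x_{j-i}$, so the constraint $x_i + x_{r-j}' \le x_{r-j+i}$ becomes $x_i + (1-x_j) \le 1 - x_{j-i}$, i.e.\ $x_i + x_{j-i} \le x_j$ --- which is already one of the defining constraints of $\mathcal{X}_{r,k}$ (since $\min\{i,j-i\}\le k$). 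So \emph{no cascade occurs at all}: the single-coordinate move is feasible, and if $x_{r-j} < 1-x_j$ it strictly increases the product, a contradiction. Your instinct to ``fix the smallest bad $j$'' is close to this induction, but without extracting that the inductive hypothesis turns each would-be cascading constraint into an already-satisfied superadditivity constraint, you were led into the unnecessary and much harder block-perturbation territory. (Also note: the lemma statement's $x_{n-j}$ is a typo for $x_{r-j}$.)
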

\begin{proof}
We use induction on $j$. For the base case $j=1$, let $x_{r-1}'=1-x_1\geq x_{r-1}$ and $x_i'=x_i$ for all other $i\in[r]$. The only inequality we need to check to verify $(x_1',\ldots,x_r')\in\mathcal{X}_{r,k}$ is $x_1'+x_{r-1}'\leq x_r'=1$, but this holds from definition. Hence, $x_{r-1}'=x_{r-1}=1-x_1$, as otherwise $\prod_{i=1}^rx_i<\prod_{i=1}^rx_i'$, a contradiction. 

Now let $1<j\leq k$ and assume $x_i+x_{r-i}=1$ for all $i\in[j-1]$. Let $x_{r-j}'=1-x_j\geq x_{r-j}$ and $x_i'=x_i$ for all other $i\in[r]$. The only inequalities we need to check to verify $(x_1',\ldots,x_r')\in\mathcal{X}_{r,k}$ are $x_i'+x_{r-j}'\leq x_{r-j+i}'$ for all $i\in[j]$. $x_j'+x_{r-j}'\leq x_{r}'=1$ from definition. For $i\in[j-1]$, $j-i\in[j-1]$, so by induction hypothesis we have $x_{r-j+i}'=x_{r-j+i}=1-x_{j-i}$, and thus $x_i'+x_{r-j}'=x_i+1-x_j\leq 1-x_{j-i}=x_{r-j+i}'$, as required. Thus, $x_{r-j}'=x_{r-j}=1-x_j$, as otherwise $\prod_{i=1}^rx_i$ is not maximized.
\end{proof}

For all integers $L\leq R$, call the set $\{L,L+1,\cdots,R\}$ an \textit{interval}, and denote it by $[L,R]$.

\begin{definition}
Let $(x_1,\ldots,x_r)\in\mathcal{X}_{r,k}$. Recall that $x_r=1$, let $x_0=0$. Let $0\leq L\leq R\leq r$.
\begin{itemize}
    \item $[L,R]$ is a \textit{uniform interval} if $x_i=x_L +(i-L)x_1$ for all $L \leq i\leq R$. 
    \item $[L,R]$ is a \textit{segment} if it is a maximal uniform interval. In other words, if it is a uniform interval that is not properly contained in any other uniform interval.
    \item The \textit{length} of an interval $[L,R]$ is $R-L+1$, the number of integers it contains.
    \item The unique segment of the form $[0,I]$ is called the \textit{initial segment}.
    \item A segment is \textit{central} if it is contained in $[k+1,r-k-1]$. A segment is \textit{left-crossing} if it contains $[k,k+1]$, is \textit{right-crossing} if it contains $[r-k-1,r-k]$, and is \textit{crossing} if it is left and/or right-crossing. 
    \item A segment is \textit{super} if it has length $I+1$.
\end{itemize}
\end{definition}

The following lemma shows that if $I\leq k-1$, then super segments are exactly the segments with maximum possible length. 
\begin{lemma}\label{<=I+1}
If $I\leq k-1$, then every segment has length at most $I + 1$.
\end{lemma}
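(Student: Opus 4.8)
The plan is to argue by contradiction, using the superadditivity inequalities that define $\mathcal{X}_{r,k}$ directly. Suppose some segment $[L,R]$ has length at least $I+2$, so that $R\geq L+I+1$. Since the initial segment $[0,I]$ has length exactly $I+1$, we may assume $L\geq1$; we may also assume $I\leq r-1$, since otherwise the whole sequence is uniform and $[0,r]$ is the only segment, which has length $r+1=I+1$. Note finally that $I\geq1$, as $[0,1]$ is always a uniform interval.

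The first step is to pin down the location of $L$, namely to show $I+1\leq L\leq r-I-1$. The upper bound is immediate from $R\leq r$ and $R\geq L+I+1$. For the lower bound, observe that on the initial segment we have $x_j=jx_1$ for $0\leq j\leq I$, so every gap $x_j-x_{j-1}$ with $1\leq j\leq I$ equals $x_1$; on the other hand, since $[L,R]$ is a \emph{maximal} uniform interval and $L\geq1$, the larger interval $[L-1,R]$ is not uniform, which forces $x_L-x_{L-1}\neq x_1$. Hence $L\notin[1,I]$, giving $L\geq I+1$.

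The second step is to apply the defining inequality $x_i+x_j\leq x_{i+j}$ with $i=I+1$ and $j=L$. This is permitted because $i=I+1\leq k$ (as $I\leq k-1$) and $i\leq j\leq r-i$ (which is precisely the pair of bounds on $L$ just established). Since $L$ and $L+I+1$ both lie in the uniform interval $[L,R]$, we have $x_{L+I+1}=x_L+(I+1)x_1$, so the inequality collapses to $x_{I+1}\leq(I+1)x_1$. Combined with the $i=1$, $j=I$ instance of the defining inequality (valid since $1\leq I\leq r-1$), which reads $x_{I+1}\geq x_1+x_I=(I+1)x_1$, we conclude $x_{I+1}=(I+1)x_1$. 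But then $[0,I+1]$ is a uniform interval, contradicting the maximality of the initial segment $[0,I]$.

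I do not anticipate a genuine obstacle: the argument is short, and the only points demanding care are checking that the index ranges in the two invocations of the defining inequality are legitimate and treating the degenerate case $I=r$ separately. It may be worth remarking that the argument never uses that $(x_1,\dots,x_r)$ maximizes the product, so the conclusion in fact holds for every member of $\mathcal{X}_{r,k}$ with $I\leq k-1$.
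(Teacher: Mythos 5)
Your proof is correct and takes essentially the same route as the paper: both isolate $L\geq I+1$, use the in-segment equality $x_{L+I+1}=x_L+(I+1)x_1$, and then play the constraint $x_{I+1}+x_L\leq x_{L+I+1}$ (valid since $I+1\leq k$) against the strictness $x_{I+1}>(I+1)x_1$ coming from the maximality of $[0,I]$. The only cosmetic difference is that the paper states this strictness up front and derives $x_{L+I+1}>x_{L+I+1}$ directly, whereas you first deduce $x_{I+1}=(I+1)x_1$ and then contradict maximality; your treatment of the base cases and the justification that $L\geq I+1$ is a little more explicit but amounts to the paper's one-line observation that any segment other than $[0,I]$ starts past $I$.
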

\begin{proof}
Let $[L,R]$ be a segment. If $[L,R]=[0,I]$ this is trivial, so assume $L\geq I+1$. Suppose for a contradiction that $[L,R]$ has length at least $I+2$, then $L+I+1\leq R$. Since $[0,I]$ and $[L,R]$ are both segments, we have $x_{I+1}>(I+1)x_1$ and $x_{L+I+1}=x_L+(I+1)x_1$. But as $(x_1,\ldots,x_r)\in\mathcal{X}_{r,k}$ and $I+1\leq k$, we have $x_{L+I+1}\geq x_{L}+x_{I+1}>x_L+(I+1)x_1$, a contradiction.
\end{proof}

In the following, we collect several lemmas about segments. 

\begin{lemma}\label{moveleft}
Let $[L_1,R_1],[L_2,R_2]$ be two segments with $L_1<L_2$. If $i\in [L_1,R_1]$ and $i+j\in [L_2,R_2]$ satisfy $L_2-L_1>j$ and $\min\{i,j\}\leq k$, then $x_{i}+x_{j}<x_{i+j}$.
\end{lemma}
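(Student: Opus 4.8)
The plan is to prove $x_i + x_j < x_{i+j}$ for the given configuration by using the ``slack'' that must exist somewhere between the two segments and transporting it via the superadditivity inequalities from $\mathcal{X}_{r,k}$.

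First I would set up the two segments carefully. Since $[L_1,R_1]$ and $[L_2,R_2]$ are distinct segments with $L_1 < L_2$, and since $i \in [L_1,R_1]$ while $i+j \in [L_2,R_2]$, there must be at least one ``jump'' in the sequence strictly between these two points: because $[L_1,R_1]$ is a \emph{maximal} uniform interval, we have $x_{R_1+1} > x_{R_1} + x_1$ (otherwise $[L_1,R_1+1]$ would be uniform), and more generally the sequence fails to be arithmetic with common difference $x_1$ somewhere in the range $[R_1, L_2]$. The hypothesis $L_2 - L_1 > j$ is what guarantees that one of these jumps lies within ``reach'' of the index $i$ when we add $j$; concretely, the relevant jump occurs at some index $m$ with $i < m \le i+j$, i.e.\ $x_m > x_{m-1} + x_1$.

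The key step is then a telescoping/induction argument. Write $x_{i+j} - x_i = \sum_{t=1}^{j}(x_{i+t} - x_{i+t-1})$. Using $(x_1,\dots,x_r) \in \mathcal{X}_{r,k}$ with the index $1 \le k$, each consecutive difference satisfies $x_{i+t} - x_{i+t-1} \ge x_1$ (applying $x_1 + x_{i+t-1} \le x_{i+t}$, valid since $1 \le k$ and $1 \le i+t-1 \le r-1$), and hence $x_{i+j} - x_i \ge j x_1 = x_j' $ where I should be careful: actually I want to compare with $x_j$, not $jx_1$. So instead I would argue as follows. Since $i \in [L_1,R_1]$ which is a uniform interval and (as one checks) $i - L_1 < $ enough room, we have control on $x_i$; and the condition $\min\{i,j\} \le k$ lets us apply the defining inequality with whichever of $i,j$ is at most $k$. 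If $i \le k$: apply $x_i + x_j \le x_{i+j}$ directly — but this only gives $\le$, so I need the strict jump to upgrade it. The cleanest route: locate the jump index $m$ with $i < m \le i+j$ and $x_m > x_{m-1}+x_1$; then split $x_{i+j} - x_i = (x_{i+j} - x_m) + (x_m - x_{m-1}) + (x_{m-1} - x_i)$, bound the first piece below by $x_{i+j-m}$ (via repeated superadditivity, legitimate since the increments are $\ge x_1$ and we can chain), the middle piece strictly below by $> x_1 + (\text{something})$...

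I expect the main obstacle to be precisely this bookkeeping: choosing the right pivot index $m$ and assembling the superadditivity inequalities (which require the small index to be $\le k$) so that the strict inequality at the jump propagates all the way to $x_i + x_j < x_{i+j}$ without the side conditions $L_2 - L_1 > j$ and $\min\{i,j\}\le k$ being violated at any intermediate step. In particular one must check that when chaining $x_a + x_b \le x_{a+b}$ the index playing the role of ``$i$'' stays $\le k$; the hypothesis $\min\{i,j\}\le k$ together with the freedom to peel off increments of size $1$ (which are always $\le k$) should make this work, but the case split on whether $i \le k$ or $j \le k$ will need to be handled separately, with the $j \le k$ case exploiting that we can write $x_j \le j x_1 < x_m - x_{m-1} + (\text{rest})$ using the jump. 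I would organize the final write-up as: (1) produce the jump index $m$ from maximality of $[L_1,R_1]$ and the gap condition; (2) establish the elementary bound $x_{a+1} \ge x_a + x_1$ for all $a$; (3) do the two-case telescoping to get the strict inequality.
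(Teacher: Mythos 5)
Your proposal correctly identifies some of the relevant ingredients — that maximality of $[L_1,R_1]$ supplies a strict jump $x_{L_2}>x_{L_2-1}+x_1$, that the condition $L_2-L_1>j$ positions this jump between $i$ and $i+j$, and that $\min\{i,j\}\le k$ must be used to invoke a superadditivity inequality from $\mathcal X_{r,k}$. But the decomposition you propose does not close, and the obstruction is not mere bookkeeping.

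Concretely, take your pivot $m=L_2$ and write
$x_{i+j}-x_i=(x_{i+j}-x_{L_2})+(x_{L_2}-x_{L_2-1})+(x_{L_2-1}-x_i)$.
Within the segment $[L_2,R_2]$ the first piece is \emph{exactly} $(i+j-L_2)x_1$, and within $[L_1,R_1]$ the last piece is at least $(L_2-1-i)x_1$ (or, via superadditivity when $i\le k$, at least $x_{L_2-1-i}$). Adding the strict middle jump $>x_1$, the best you extract is
$x_{i+j}-x_i>(i+j-L_2+1)x_1+x_{L_2-1-i}$.
But superadditivity and the chained bound $x_a\ge ax_1$ give $(i+j-L_2+1)x_1+x_{L_2-1-i}\le x_{i+j-L_2+1}+x_{L_2-1-i}\le x_j$, i.e.\ your lower bound sits on the \emph{wrong side} of $x_j$. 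Telescoping with piecewise lower bounds can never beat $x_j$ here precisely because the arithmetic pieces contribute plain multiples of $x_1$ while $x_j$ can strictly exceed $jx_1$. The strict jump is a genuine surplus, but it is of order $x_1$, not of order $x_j-jx_1$, so it cannot be cashed in by this route.

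The paper avoids this by not splitting $x_{i+j}-x_i$ into pieces at all. Instead it uses the arithmetic structure of each segment to \emph{slide both endpoints down in parallel by the same amount} $i+j-L_2+1$: since $i+j\in[L_2,R_2]$ and $L_2-j-1\in[L_1,R_1]$ (the latter using $L_2-L_1>j$ and $L_2\le i+j$), one has $x_{i+j}=x_{L_2}+(i+j-L_2)x_1>x_{L_2-1}+(i+j-L_2+1)x_1$ and $x_i=x_{L_2-j-1}+(i+j-L_2+1)x_1$. Subtracting kills the $x_1$-terms and gives $x_{i+j}-x_i>x_{L_2-1}-x_{L_2-j-1}$, which is a comparison at a fixed index gap of $j$, and then a \emph{single} superadditivity application $x_{L_2-j-1}+x_j\le x_{L_2-1}$ finishes it. The hypothesis $\min\{i,j\}\le k$ enters exactly once here: either $j\le k$, or $L_2-j-1\le i-1\le k$. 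This ``sliding'' step is the missing idea; without it your telescoping lower bound is provably too weak.
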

\begin{proof}
From the definition of segments, we have $x_{i+j}=x_{L_2}+(i+j-L_2)x_1$ and $x_{i+j}>x_{L_2-1}+(i+j-L_2+1)x_1$. Similarly, using $i-(i+j-L_2+1)=L_2-j-1\geq L_1$, we have $x_{i}=x_{L_2-j-1}+(i+j-L_2+1)x_1$. Thus, $x_{i+j}-x_{i}>x_{L_2-1}-x_{L_2-j-1}\geq x_{(L_2-1)-(L_2-j-1)}=x_j$, as required. Note that in the last inequality we used that either $j\leq k$, or $L_2-j-1\leq i+j-j-1=i-1\leq k$.
\end{proof}

\begin{lemma}\label{moveright}
Let $[L_1,R_1],[L_2,R_2]$ be two segments with $L_1<L_2$. If $i\in [L_1,R_1]$ and $i+j\in [L_2,R_2]$ satisfy $R_2-R_1>j$ and $\min\{R_1+1,j\}\leq k$, then $x_{i}+x_{j}<x_{i+j}$.
\end{lemma}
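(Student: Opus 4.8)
The plan is to run the exact mirror image of the argument for Lemma~\ref{moveleft}, replacing the roles of the left endpoints $L_1,L_2$ by the right endpoints $R_1,R_2$.

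First I would collect the structural facts needed. Since $1\in[k]$, membership $(x_1,\ldots,x_r)\in\mathcal{X}_{r,k}$ forces $x_{a+1}\geq x_a+x_1$ for all $a\in[1,r-1]$, so consecutive gaps are always at least $x_1$ and a segment is exactly a maximal run of gaps equal to $x_1$; in particular distinct segments are disjoint. Hence $L_1<L_2$ gives $R_1<L_2$, and together with $i\leq R_1$ and $i+j\geq L_2$ this yields $j\geq1$; the hypothesis $R_2-R_1>j$ then gives $R_1+1\leq R_2-1\leq r-1$ (so $x_{R_1+1}$ is defined), and since $R_1\geq1$ (the initial segment $[0,I]$ has $I\geq1$, as $x_0+x_1=x_1$), the inequality $x_1+x_{R_1}\leq x_{R_1+1}$ is among the defining constraints of $\mathcal{X}_{r,k}$. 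Finally, $R_1+j+1\leq R_2$ and $R_1+j+1>i+j\geq L_2$, so both $i+j$ and $R_1+j+1$ lie in the segment $[L_2,R_2]$.

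Next comes the computation. Using uniformity of $[L_1,R_1]$, $x_i=x_{R_1}-(R_1-i)x_1$; since $R_1+1$ lies outside this (maximal) segment, $x_{R_1+1}>x_{R_1}+x_1$, hence $x_i<x_{R_1+1}-(R_1-i+1)x_1$. Using uniformity of $[L_2,R_2]$ with $i+j,\,R_1+j+1\in[L_2,R_2]$, we get $x_{i+j}=x_{R_1+j+1}-(R_1-i+1)x_1$. Subtracting gives $x_{i+j}-x_i>x_{R_1+j+1}-x_{R_1+1}$. Finally $x_{R_1+1}+x_j\leq x_{R_1+j+1}$, which is a defining inequality of $\mathcal{X}_{r,k}$ since $\min\{R_1+1,j\}\leq k$ by hypothesis and $(R_1+1)+j\leq R_2\leq r$; this gives $x_{R_1+j+1}-x_{R_1+1}\geq x_j$, and combining, $x_{i+j}-x_i>x_j$, as desired.

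I do not expect a real obstacle: the argument is a routine reflection of Lemma~\ref{moveleft}. The only points needing care are (i) that all the indices invoked, namely $R_1+1$ and $R_1+j+1$, genuinely lie in $\{0,\ldots,r\}$ and inside the appropriate segments — this is exactly where $R_2-R_1>j$ and the disjointness of segments enter — and (ii) that the single invocation of an $\mathcal{X}_{r,k}$-inequality is legitimate, i.e. that $\min\{R_1+1,j\}\leq k$ together with $(R_1+1)+j\leq r$ matches the required form $x_a+x_b\leq x_{a+b}$ with the smaller index at most $k$, which is precisely the content of the hypothesis $\min\{R_1+1,j\}\leq k$.
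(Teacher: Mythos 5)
Your proof is correct and follows exactly the same route as the paper's: compare $x_{i+j}-x_i$ against $x_{R_1+j+1}-x_{R_1+1}$ using uniformity of the two segments and maximality of $[L_1,R_1]$, then invoke the $\mathcal{X}_{r,k}$ constraint $x_{R_1+1}+x_j\leq x_{R_1+j+1}$ via $\min\{R_1+1,j\}\leq k$. You spell out the index-range checks (that $R_1+1$ and $R_1+j+1$ are legitimate and land in the right segments) more explicitly than the paper does, but the argument is the same.
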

\begin{proof}
From the definition of segments, we have $x_{R_1}=x_i+(R_1-i)x_1$ and $x_{R_1+1}>x_i+(R_1-i+1)x_1$. Similarly, using $R_2\geq (i+j)+(R_1-i+1)=j+R_1+1$, we have $x_{j+R_1+1}=x_{i+j}+(R_1-i+1)x_1$. Thus, $x_{i+j}-x_{i}>x_{j+R_1+1}-x_{R_1+1}\geq x_j$, as required. Note that in the last inequality we used that either $j\leq k$, or $R_1+1\leq k$.
\end{proof}

\begin{lemma}\label{segsym}
Suppose $I\leq k-1$. Let $0\leq L\leq R\leq r$.
\begin{itemize}
    \item If $R\leq k-1$, then $[L,R]$ is a segment if and only if $[r-R,r-L]$ is. If $R\leq k$, then $[L,R]$ is a super segment if and only if $[r-R,r-L]$ is. 
    \item If $L\leq k$ and $R\geq r-k$, or in other words if $[L,R]$ is both a left and a right-crossing segment, then $L=r-R$.
    \item If $[L,R]$ is left but not right-crossing, then $r-L$ is the right endpoint of a segment that contains $r-k$. If $[L,R]$ is right but not left-crossing, then $r-R$ is the left endpoint of a segment that contains $k$.
\end{itemize}
\end{lemma}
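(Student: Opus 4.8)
Throughout, $(x_1,\dots,x_r)$ is a maximiser of $\prod_i x_i$ over $\mathcal{X}_{r,k}$, so Lemma~\ref{symmetry} applies; set $x_0=0$ and let $d_i:=x_i-x_{i-1}$ be the $i$-th gap. Since $d_1=x_1$ and $d_i\ge x_1$ for all $i\in[2,r]$ (the inequality $x_1+x_{i-1}\le x_i$ is one of the defining constraints of $\mathcal{X}_{r,k}$, as $1\le k$), a uniform interval is exactly a block of consecutive indices whose intermediate gaps all equal $x_1$; a segment is a maximal such block, and the segments partition $\{0,1,\dots,r\}$. Applying $x_j+x_{r-j}=1$ (Lemma~\ref{symmetry}, also trivially for $j=0$) to $j=i$ and to $j=i-1$ and subtracting yields the \emph{gap symmetry}
\[ d_i=d_{r+1-i}\quad\text{for all }i\in[1,k]\text{, hence also for all }i\in[r+1-k,r].\]
Everything below follows from this identity; the hypothesis $I\le k-1$ is used only to keep the two "bad" indices $k+1$ and $r-k$ — where symmetry may fail — out of the arguments.

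For the \textbf{first bullet}, when $R\le k-1$ the assertion "$[L,R]$ is a segment" unwinds into "$d_i=x_1$ on $[L+1,R]$", "$L=0$ or $d_L>x_1$", "$d_{R+1}>x_1$", and all indices occurring here lie in $[1,k]$ (or $L=0$); gap symmetry carries each condition to the matching one for $[r-R,r-L]$, and reading the symmetry backwards gives the converse. The super statement with $R=k$ is the one new case, since $d_{k+1}$ is now out of range: a super segment $[L,k]$ must have $L=k-I$, and reflecting the $x_1$-gaps on $[L+1,k]$ together with $d_{k-I}>x_1$ shows $[r-k,r-k+I]$ is a uniform interval of length $I+1$ closed on the right; it is closed on the left too, for otherwise it would lie in a segment of length $\ge I+2$, contradicting Lemma~\ref{<=I+1}; so $[r-k,r-k+I]$ is super (and conversely, by the same reasoning), and for $R\le k-1$ the super claim follows from the segment case since reflection preserves length.

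For the \textbf{second bullet}, $L\le k$ and $R\ge r-k$ is equivalent to $[L,R]$ being both left- and right-crossing (using $r\ge 2k+2$, which $k=\lceil r/e\rceil\le\lfloor r/2\rfloor-1$ forces), and then both $[L,R]$ and $[r-R,r-L]$ contain $[k+1,r-k]$. On that block the condition $d_i=x_1$ is inherited for free from $[L+1,R]$, while every remaining index in the segment-conditions for $[r-R,r-L]$ sits in $[1,k]\cup[r+1-k,r]$, where gap symmetry applies; hence $[r-R,r-L]$ is a segment. As it and $[L,R]$ both contain $k+1$ and distinct segments are disjoint, $[L,R]=[r-R,r-L]$, i.e.\ $L=r-R$. (The extremes $L=0$, $R=r$ are impossible: either forces $I\ge r-k\ge k+1$.) For the \textbf{third bullet}, say $[L,R]$ is left- but not right-crossing, so $L\le k$ and $k+1\le R\le r-k-1$; then $L\ge 1$, else $[0,R]$ is the initial segment with $I=R\ge k+1>k-1$. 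Reflecting the $x_1$-gaps on $[L+1,k]$ gives $d_j=x_1$ on $[r-k+1,r-L]$, so the segment through $r-k$ reaches at least $r-L$; reflecting $d_L>x_1$ gives $d_{r-L+1}>x_1$, so it ends exactly at $r-L$, as claimed. The right-but-not-left case is symmetric, with the extra step of ruling out $R=r$ first: that would, on reflecting $d_i=x_1$ on $[r-k+1,r]$, force $d_1=\dots=d_k=x_1$, placing $k$ in $[0,I]$ and contradicting $I\le k-1$.

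\textbf{Main obstacle.} There is really a single idea, the gap-symmetry identity, and no inequalities to optimise; the work is the index bookkeeping forced by the two positions $k+1$ and $r-k$ where symmetry is unavailable, and this is precisely what splits the lemma into three cases of different flavour — intervals inside the safe zone $[0,k]$ (first bullet, the borderline super case $R=k$ rescued by Lemma~\ref{<=I+1}); intervals swallowing the whole bad block $[k+1,r-k]$ so those indices become harmless interior $x_1$-gaps (second bullet); and intervals touching only one end (third bullet), where the care goes into excluding the wrap-around cases $L=0$ and $R=r$ that $I\le k-1$ forbids. I expect tracking all these ranges and the occasional empty sub-interval to be the only genuinely delicate part.
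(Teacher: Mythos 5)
Your proof is correct and follows essentially the same route as the paper, which also works directly from Lemma~\ref{symmetry} and reads off the segment structure from the identity $x_j+x_{r-j}=1$ for $j\in[0,k]$. Your reformulation via the gap symmetry $d_i=d_{r+1-i}$ for $i\in[1,k]\cup[r+1-k,r]$ is the same fact in differential form, and it tidily packages the reflection arguments. One small place where you are more explicit than the paper: in the borderline super case $R=k$, the paper simply asserts that a uniform interval of length $I+1$ must itself be a segment, whereas you justify maximality by invoking Lemma~\ref{<=I+1} to rule out a longer containing segment — this step is indeed needed and worth stating. Your second bullet is argued slightly differently (you show $[r-R,r-L]$ is a segment outright and then use that segments partition $\{0,\dots,r\}$, while the paper derives a contradiction from assuming $L\neq r-R$), but both are short and correct. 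Your explicit exclusions of the edge cases $L=0$ and $R=r$ match the paper's handling.
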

\begin{proof}
Trivially, $x_0=1-x_r$. By Lemma \ref{symmetry}, $x_i=1-x_{r-i}$ for all $i\in[k]$. We will refer to these properties as symmetry. 

If $R\leq k$, then by symmetry, $[L,R]$ is uniform if and only if $[r-R,r-L]$ is uniform. Moreover, if $R\leq k-1$, then $[L,R]$ is maximally uniform if and only if $[r-R,r-L]$ is, so $[L,R]$ is a (super) segment if and only if $[r-R,r-L]$ is. If $R=k$ and $[L,R]$ is a super segment, then $[r-R,r-L]$ is uniform and has the same length as $[L,R]$, so it is also a super segment. The reverse implication is similar. 

Now suppose $[L,R]$ is both a left and a right-crossing segment. If $L<r-R$, then in particular $[L,k]$ is uniform, so $[r-k,r-L]$ is also uniform by symmetry. But as $r-L>R\geq r-k$, this means that $[L,r-L]$ is a longer uniform interval than $[L,R]$, a contradiction. Similarly, we reach a contradiction if $L>r-R$, so $L=r-R$.

If $[L,R]$ is left but not right-crossing, then $L\leq k$ and $[L,k]$ is uniform, so $[r-k,r-L]$ is also uniform by symmetry. If $L=0$, then $r-L=r$ is trivially the right endpoint of the segment containing $[r-k,r-L]$. If $L>0$, then maximality of $[L,R]$ implies that $x_{L-1}<x_L-x_1$, so symmetry implies that $x_{r-L+1}>x_{r-L}+x_1$, and thus $r-L$ is the right endpoint of the segment containing $[r-k,r-L]$. The case when $[L,R]$ is right but not left-crossing is similar. 
\end{proof}

We now prove a key technical lemma that shows that the initial segment of any maximiser must have length at least $k+1$. This is done by proving that if a maximiser has a shorter initial segment, then there is a small perturbation of it that remains inside $\mathcal{X}_{r,k}$ but has a larger product. 
\begin{lemma}\label{I<=k}
If $(x_1,\ldots,x_r)\in\mathcal{X}_{r,k}$ maximizes $\prod_{i=1}^rx_i$, then $I\geq k$.
\end{lemma}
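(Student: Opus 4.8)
The plan is to argue by contradiction. Suppose a maximiser $(x_1,\dots,x_r)\in\mathcal X_{r,k}$ has $I\le k-1$; I will produce $(x_1',\dots,x_r')\in\mathcal X_{r,k}$ with $\prod_i x_i'>\prod_i x_i$, contradicting maximality. It is convenient to work with the increments $d_i=x_i-x_{i-1}$ (with $x_0=0$): then $d_1=x_1$, $\sum_i d_i=1$, and the constraint $x_1+x_{i-1}\le x_i$ says exactly $d_i\ge x_1$ for every $i$; segments are the maximal runs of consecutive indices with $d=x_1$, and I call $m$ a \emph{jump} if $d_m>x_1$, so $I+1$ is a jump. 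By Lemma~\ref{symmetry}, $x_j+x_{r-j}=1$ for $j\in[k]$, which forces $d_r=x_1$ and $d_{r-j}=d_{j+1}$ for $0\le j\le k-1$; combined with Lemma~\ref{segsym} this shows the final segment is $[r-I,r]$, a super segment mirroring the initial one. By Lemma~\ref{<=I+1} every segment has length at most $I+1$.

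The natural perturbation is a two-increment swap: fix a small $\eps>0$ and set $d_p\mapsto d_p+\eps$, $d_{q+1}\mapsto d_{q+1}-\eps$ (all other increments unchanged), i.e.\ $x_i'=x_i+\eps$ for $p\le i\le q$ and $x_i'=x_i$ otherwise. I want to choose $p\le q$ so that (i) $q+1$ is a jump, so $d_{q+1}-\eps\ge x_1$ for small $\eps$; (ii) $p>k$, so that no constraint $x_a+x_b\le x_{a+b}$ with $a\le k$ is made tighter by the change at $d_p$ (a short computation shows this would need $p\le a$, which is now impossible); and (iii) $q\le r-k-1$, so that $[p,q]$ misses $\{1,\dots,k\}\cup\{r-k,\dots,r-1\}$ and hence $x_j'+x_{r-j}'=x_j+x_{r-j}=1$ for all $j\in[k]$. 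Given such $p,q$, the point $(x_i')$ has $x_1'=x_1$, $x_r'=1$, is monotone, satisfies all the symmetry-type constraints, and has strictly larger product; and the only constraints $x_a+x_b\le x_{a+b}$ that the swap can tighten are those with $b<q+1\le a+b$ — each by exactly $\eps$ — so it suffices to show every such constraint has strictly positive slack $x_{a+b}-x_a-x_b$, and then take $\eps$ below the minimum slack (and below $d_{q+1}-x_1$).

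So the proof comes down to two things. First, the existence of $p,q$: taking $p=k+1$ and $q+1$ any jump in the window $[k+2,r-k]$ works (using $k\le\floor{r/2}-1$ to see $k+1\le r-k-1$), but one must argue such a jump exists; if it does not, then (again via Lemma~\ref{<=I+1}, since the jumps at $I+1$ and $r-I$ are outside that window) the sequence is forced to be very close to uniform — a long initial segment, a long middle segment, a long final segment, and only short segments near the two kinks — and in this residual regime one finds an improving feasible perturbation by a more direct computation, essentially sliding the configuration toward the uniform point $x_i=i/r$. Second, the slack estimate: when $a\le I$ it is immediate, since $x_a=ax_1$ while $x_{a+b}-x_b=\sum_{\ell=b+1}^{a+b}d_\ell\ge(a-1)x_1+d_{q+1}>ax_1$ because the jump $q+1$ lies in the range $\{b+1,\dots,a+b\}$. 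I expect the main obstacle to be the case $I<a\le k$: here $x_a>ax_1$, so the easy bound fails, and one must instead compare the segment containing $a$ with the segment containing $q+1$ — both of length at most $I+1$ — by invoking Lemma~\ref{moveright} (or Lemma~\ref{moveleft}) together with the reflection statements in Lemma~\ref{segsym} to conclude $x_a+x_b<x_{a+b}$. Pushing this comparison through in every configuration, and cleanly handling the near-uniform residual case, is where essentially all the work will lie.
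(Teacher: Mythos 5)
Your high-level strategy — assume $I\le k-1$, perturb within $\mathcal{X}_{r,k}$, and contradict maximality — matches the paper's, and your preliminaries (segments, the $\le I+1$ length bound, $x_j+x_{r-j}=1$) are the same ingredients. But the perturbation you propose is materially different from the paper's and, as written, it does not close, precisely at the two places you flag as ``where the work will lie.'' Your perturbation shifts a contiguous block: $x_i'=x_i+\varepsilon$ for $k<p\le i\le q\le r-k-1$, all other $x_i$ fixed, with $q+1$ a jump. As you correctly observe, the only constraints this tightens are $x_a+x_b\le x_{a+b}$ with $a\in[k]$ and $p\le b\le q<a+b$, so you need strict slack in all of them. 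Your estimate handles $a\le I$ (there $x_a=ax_1$, while $x_{a+b}-x_b=\sum_{\ell=b+1}^{a+b}d_\ell$ includes the jump $d_{q+1}>x_1$, so the sum exceeds $ax_1$). But for $I<a\le k$ — a nonempty range precisely because you assumed $I\le k-1$ — one has $x_a>ax_1$ and the bound fails. The patch you gesture at, invoking Lemmas~\ref{moveleft}, \ref{moveright} and \ref{segsym}, only yields strict inequality under an offset hypothesis such as $L_2-L_1>j$ or $R_2-R_1>j$; in the ``aligned'' case where the segment containing $b$ and the segment containing $a+b$ have matching length and offset, those lemmas are silent, and the constraint $x_a+x_b\le x_{a+b}$ may genuinely be tight. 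For instance with $a=I+1$, if $\sum_{\ell=b+1}^{a+b}d_\ell=Ix_1+d_{q+1}$ while $x_{I+1}=Ix_1+d_{I+1}$, the slack is $d_{q+1}-d_{I+1}$, whose sign you have not controlled. So the block shift can leave $\mathcal{X}_{r,k}$.

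The residual case is also not peripheral: since segments have length $\le I+1\le k$ and $k=\lceil r/e\rceil$ forces $r\le 3k$, the window $[k+1,r-k]$ (of length $r-2k$) can easily be uniform and contained in a single crossing segment, so no jump exists in $[k+2,r-k]$ and your primary perturbation is unavailable; ``slide toward $i/r$'' is not yet an argument. The paper's proof avoids both obstacles by using a different perturbation: rather than shifting a contiguous block while freezing the small indices, it adds $\pm\varepsilon$ at the \emph{endpoints of super segments}, including endpoints in $[1,k]$ and (by reflection) in $[r-k,r-1]$, so that $x_j'+x_{r-j}'=1$ is preserved and — crucially — a tight constraint $x_a+x_b=x_{a+b}$ can remain tight because $x_a$ moves too. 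The three cases in the paper's proof then use Lemmas~\ref{moveleft}, \ref{moveright}, \ref{segsym} exactly to rule out the unaligned tight configurations, and the strict increase of the product is extracted from the symmetric pairings via Lemma~\ref{inequality}. That coordinated, reflection-symmetric adjustment of the small coordinates is the key idea your proposal is missing; without it, the block shift is too rigid to stay feasible.
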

\begin{proof}
    Suppose for contradiction that $I\leq k-1$. By Lemma \ref{symmetry}, $x_i=1-x_{r-i}$ for all $i\in[k]$. Set $x_0=0=1-x_r$. Let $\eps>0$ be a small constant to be chosen later. We define a new sequence $(x_0',x_1',\ldots,x_r')$ by modifying each $x_i$ by at most $\eps$ as follows. Recall that all super segments have length $I+1\geq 2$. 

    \begin{itemize}
        \item  $x'_I = x_I + \eps$ and $x'_{r-I} = x'_{r-I} - \eps$.
        \item For every super segment $[L,R]\not=[0,I],[r-I,r]$ that is not central, set $x'_L = x_L - \eps$ and $x'_R = x_R + \eps$.
        \item If $[L,R]$ is a left but not right-crossing super segment, additionally set $x'_{r-L}=x_{r-L}+\eps$. 
        \item If $[L,R]$ is a right but not left-crossing super segment, additionally set $x'_{r-R}=x_{r-R}-\eps$. 
        \item For every central super segment $[L,R]$, set $x'_R = x_R + \eps$.
        \item For all other variables $x_i$, set $x'_i = x_i$. 
    \end{itemize}
     Note that from Lemma \ref{segsym} and the definition above, if $x_i'=x_i+\eps$, then $i$ is the right endpoint of a segment, and if $x_i'=x_i-\eps$, then $i$ is the left endpoint of a segment. We now show that for sufficiently small $\eps>0$, $(x_1',\ldots,x_r')\in\mathcal{X}_{r,k}$ and $\prod_{i=1}^rx_i'>\prod_{i=1}^rx_i$, which contradicts the maximality of $\prod_{i=1}^rx_i$ and finishes the proof.

    We first show that $(x_1',\ldots,x_r')\in\mathcal{X}_{r,k}$. Observe from Lemma \ref{segsym} and the definition above that $x_i'+x_{r-i}'=1\leq x_r'$ for all $i\in[k]$. For every $i\in[k]$ and $i\leq j\leq r-1-i$, since $(x_1,\ldots,x_r)\in\mathcal{X}_{r,k}$, we have $x_i+x_j\leq x_{i+j}$. Call such a triple of indices $(i,j, i+j)$ \textit{tight} if $x_i + x_j = x_{i+j}$. Note that if $x_i,x_j,x_{i+j}$ is not tight, then for sufficiently small $\eps>0$ we have $x_i'+x_j'\leq x_{i+j}'$ as well. Therefore, we only need to check the desired inequalities hold on triples that are tight. In the case analysis that follows, we assume $i\in[k]$, $i\leq j\leq r-1-i$ and $(i,j,i+j)$ is tight. For every $*\in[r]$, we denote the left and right endpoints of the unique segment containing $*$ by $L_*$ and $R_*$. 

    \textbf{Case 1.} $x_i'=x_i+\eps$.

    From definition, since $x_i'=x_i+\eps$ and $i\leq k$, $[L_i,i]$ must be a super segment. Then, $(i+j)-L_{i+j}\leq i-L_i$, so $L_{i+j}-L_i\geq j$. If $L_{i+j}-L_i>j$, then the triple $(i, j, i+j)$ is not tight by Lemma \ref{moveleft}, a contradiction. If $L_{i+j}-L_i=j$, then $[L_{i+j},i+j]$ is also a super segment because it has the same length as $[L_i,i]$, so $x'_{i+j} = x_{i+j} + \eps$. 
    
    If $x_j'\leq x_j$, then $x_i'+x_j'\leq x_{i+j}'$ as required. Otherwise, $x_j'=x_j+\eps$, so $j$ is the right endpoint of a segment. Since every super segment has length at least 2, it follows that $x_{i+j}\geq x_{j+1}+x_{i-1}>(x_j+x_1)+(x_i-x_1)=x_i+x_j$, so $(i,j,i+j)$ is not tight, a contradiction. This finishes Case 1.   

    \textbf{Case 2.} $x_i'\leq x_i$ and $x_j'=x_j+\eps$.

    If $x_{i+j}'=x_{i+j}+\eps$, then $x_{i+j}'\geq x_i'+x_j'$ and we are done, so assume otherwise. From definition, there are two ways we can have $x_j'=x_j+\eps$. If $[L_j,j]$ is a super segment, we proceed as in Case 1 above to reach a contradiction. 
    %If $j=I$, then $i+I-\ell_{i+I}\leq I$ by Lemma \ref{<=I+1}. If $i+I-\ell_{i+I}=I$, then $[\ell_{i+I},i+I]$ is a super segment, so $x_{i+I}'=x_{i+I}+\eps$, a contradiction. If $i+I-\ell_{i+I}=I-1$, then since $[\ell_{i+I},i+I]=[i+1,i+I]$ is a segment, $x_{i+1}>x_i+x_1$, so $x_{i+I}=x_{i+1+I-1}=x_{i+1}+(I-1)x_1>x_i+x_1+(I-1)x_1=x_i+x_I$, and thus $(i, I, i+I)$ is not tight, a contradiction. If $i+I-\ell_{i+I}<I-1$, then $\ell_{i+I}-\ell_I=\ell_{i+I}-1>i$, so $(i,I,i+I)$ is not tight by Lemma \ref{moveleft}, a contradiction.
    
    The additional possibility is that $r-j$ is the left endpoint of a super segment $[r-j,R]$ that is left but not right-crossing. Note that this is not possible in Case 1 as $i\leq k$. Then, $j$ is the right endpoint of a segment by Lemma~\ref{segsym}, so $L_{i+j}\geq j+1$. Since $x_{i+j}'\leq x_{i+j}$, $i+j$ is not the right endpoint of a super segment. Let $d=i+j-L_{i+j}<R-r+j$, so $x_{r+i-L_{i+j}+1}=x_{r-j+(d+1)}=x_{r-j}+(d+1)x_1$ and $x_{L_{i+j}-1}<x_{i+j}-(d+1)x_1$. Note that $i+j>L_{i+j}-1\geq j\geq r-k$, so it follows that $x_{r-L_{i+j}+1}=1-x_{L_{i+j}-1}>1-x_{i+j}+(d+1)x_1=x_{r-i-j}+(d+1)x_1$. Therefore,
    $x_i+x_{r-i-j}<x_i+x_{r-L_{i+j}+1}-(d+1)x_1=x_i+x_{r-L_{i+j}+1}+x_{r-j}-x_{r+i-L_{i+j}+1}\leq x_{r-j}$.
    It follows that $x_i+x_j<x_{i+j}$, so $(i,j,i+j)$ is not tight, a contradiction. This finishes Case 2.

    \textbf{Case 3.} $x'_{i+j} = x_{i+j} - \eps$, $x_i'\leq x_i$ and $x_j'\leq x_j$.
    
    If either $x_i'=x_i-\eps$ or $x_j'=x_j-\eps$, then $x_i'+x_j'\leq x_{i+j}'$, as required, so we assume both $x_i'=x_i$ and $x_j'=x_j$. Recall that $x_{i+j}'=x_{i+j}-\eps$ implies that $i+j$ is the left endpoint of a segment, so $x_{i+j-1}<x_{i+j}-x_1$. If $L_i<i$, then $x_{i-1}=x_i-x_1$, so $x_i+x_j=x_{i-1}+x_1+x_j\leq x_{i+j-1}+x_1<x_{i+j}$, and thus $(i,j,i+j)$ is not tight, a contradiction. Similarly, $L_j<j$ implies $(i,j,i+j)$ is not tight. Therefore, we may assume $L_i=i$ and $L_j=j$ from now on. From definition, we can have $x'_{i+j} = x_{i+j} - \eps$ in the following four ways.
    
    If $i+j$ is the left endpoint of a super segment and $i+j\leq k$, then we have $R_{i+j}-i-j\geq R_i-i$. If $R_{i+j}-i-j>R_i-i$, then $R_{i+j}-R_i>j$, so by Lemma \ref{moveright} and using $R_i<i+j\leq k$, $(i,j,i+j)$ is not tight, a contradiction. If $R_{i+j}-i-j=R_i-i$, then $[i,R_i]$ is also a super segment. Since $i\in[k]$, $x_i'=x_i-\eps$, a contradiction.

   If $i+j$ is the left endpoint of a super segment with $i+j\geq r-k$, then by Lemma \ref{segsym}, $[r-R_{i+j},r-i-j]$ is a super segment, and $r-i$ is the right endpoint of the segment $[L_{r-i},r-i]$. Since $[i+j,R_{i+j}]$ is a super segment, $R_{i+j}-i-j\geq r-i-L_{r-i}$. If $R_{i+j}-i-j=r-i-L_{r-i}$, then $[L_{r-i},r-i]$ is a super segment with $r-i\geq r-k$, so $x_i'=1-x_{r-i}'=1-(x_{r-i}+\eps)=x_i-\eps$ from definition, a contradiction. If $R_{i+j}-i-j>r-i-L_{r-i}$, then $L_{r-i}-(r-R_{i+j})>j$, so we can apply Lemma \ref{moveleft} to conclude that $(r-i-j, j, r-i)$ is not tight. Hence, $(i,j,i+j)$ is not tight, a contradiction.

   If $i+j$ is the left endpoint of a super segment that is right but not left-crossing, then we have $R_{i+j}-i-j\geq R_i-i$ and $R_{i+j}\geq r-k$. If $R_i\geq k$, then $i=r-R_{i+j}$ by Lemma \ref{segsym}, so $x_i'=x_i-\eps$ from definition, a contradiction. As such, we can assume $R_i\leq k-1$. If $R_{i+j}-i-j>R_i-i$, then we can apply Lemma \ref{moveright} to conclude that $(i,j,i+j)$ is not tight, a contradiction. If $R_{i+j}-i-j=R_i-i$, then $[i,R_i]$ is also a super segment, so $x_i'=x_i-\eps$ as $i\in[k]$, a contradiction. 

    The last possibility is $i+j=r-R$, where $[L,R]$ is a super segment that is right but not left-crossing. Then $R_i-i\leq R-L$, and by Lemma \ref{segsym}, $i+j$ is the left endpoint of a segment, so $R_i<i+j\leq k$. If $R_i-i=R-L$, then $[i,R_i]$ is a super segment, so $x_i'=x_i-\eps$, a contradiction. Otherwise, let $d=R_i-i<R-L$, then $x_{R-d-1}=x_R-(d+1)x_1$ and $x_{R_i+1}=x_{i+d+1}>x_i+(d+1)x_1$. It follows that $x_{r-R_i-1}<x_{r-i}-(d+1)x_1$ as $R_i+1\leq k$, and $x_j+x_{R-d-1}\leq x_{R-d+j-1}=x_{r-R_i-1}$ as $j\leq k$. Thus, $x_{r-i-j}=x_R=x_{R-d-1}+(d+1)x_1\leq x_{r-R_i-1}-x_j+(d+1)x_1<x_{r-i}-x_j$. Therefore, $x_i+x_j<x_{i+j}$ and $(i,j,i+j)$ is not tight, a contradiction. This finishes Case 3.

    If none of Case 1, Case 2 and Case 3 holds, then $x_i'\leq x_i$, $x_j'\leq x_j$ and $x_{i+j}'\geq x_{i+j}$, so $x_i'+x_j'\leq x_{i+j}'$. This proves $(x_1',\ldots,x_r')\in\mathcal{X}_{r,k}$.

    Finally, we show that $\prod_{i=1}^rx_i'>\prod_{i=1}^rx_i$. Note that $x_i=1-x_{r-i}$ and $x_i'=1-x_{r-i}'$ for all $i\in[k]$, and $x_i'\geq x_i$ for all $k+1\leq i\leq r-k-1$. Let $J=\{i\in[k]\mid x_i'\not=x_i\}$. Note that if we arrange indices $i\in J$ in increasing order, they alternate between $x_i'>x_i$ and $x_i'<x_i$, starting with $x_I'>x_I$. Pair up the consecutive indices in $J$ starting with $I$, with possibly $I'=\max J$ being left over if $|J|$ is odd, and let $P$ be the set of pairs obtained this way. By Lemma \ref{inequality} below, if $P\not=\emptyset$, then \[\prod_{(i,j)\in P}(x_i+\eps)(x_{r-i}-\eps)(x_j-\eps)(x_{r-j}+\eps)>\prod_{(i,j)\in P}x_ix_{r-i}x_jx_{r-j}\] for sufficiently small $\eps>0$. Similarly, if $|J|$ is odd, then $(x_{I'}+\eps)(1-x_{I'}-\eps)>x_{I'}(1-x_{I'})$ for sufficiently small $\eps>0$. Since $P=\emptyset$ implies that $|J|=1$ is odd, it follows that we always have $\prod_{i=1}^rx_i'>\prod_{i=1}^rx_i$, finishing the proof. 
\end{proof}

\begin{lemma}\label{inequality}
For every $0<a<b<1/2$ and $\eps>0$ sufficiently small, 
\[(a+\eps)(b-\eps)(1-a-\eps)(1-b+\eps)>ab(1-a)(1-b).\]
\end{lemma}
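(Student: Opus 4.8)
The plan is to prove Lemma~\ref{inequality} by treating the left-hand side as a function of $\eps$ and showing its derivative at $\eps=0$ is strictly positive, which suffices for all sufficiently small $\eps>0$ since the inequality holds with equality at $\eps=0$. Write $f(\eps)=(a+\eps)(b-\eps)(1-a-\eps)(1-b+\eps)$, so that $f(0)=ab(1-a)(1-b)$, and we must show $f'(0)>0$.

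First I would group the four factors into two pairs whose product is easier to differentiate. The natural pairing is $g(\eps)=(a+\eps)(1-a-\eps)$ and $h(\eps)=(b-\eps)(1-b+\eps)$, so $f=gh$. Then $g(\eps)=a(1-a)+\eps(1-2a)-\eps^2$ and $h(\eps)=b(1-b)-\eps(1-2b)-\eps^2$, giving $g(0)=a(1-a)$, $g'(0)=1-2a$, $h(0)=b(1-b)$, $h'(0)=-(1-2b)$. By the product rule, $f'(0)=g'(0)h(0)+g(0)h'(0)=(1-2a)\,b(1-b)-(1-2b)\,a(1-a)$.

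The remaining step is to verify that $(1-2a)b(1-b)>(1-2b)a(1-a)$ under the hypotheses $0<a<b<1/2$. Since $0<a<b<1/2$, both $1-2a$ and $1-2b$ are positive, and $a(1-a),b(1-b)$ are positive, so one can divide through: the claim is equivalent to $\frac{b(1-b)}{a(1-a)}>\frac{1-2b}{1-2a}$. Now $\frac{b(1-b)}{a(1-a)}>1$ because the function $t\mapsto t(1-t)$ is strictly increasing on $(0,1/2)$ and $a<b$, whereas $\frac{1-2b}{1-2a}<1$ because $1-2b<1-2a$ and both are positive. Chaining these two comparisons through $1$ gives the desired strict inequality, hence $f'(0)>0$.

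I do not expect any serious obstacle here; the only mild care needed is the standard remark that $f'(0)>0$ together with $f(0)$ equal to the right-hand side implies $f(\eps)>f(0)$ for all sufficiently small $\eps>0$ (by, say, a first-order Taylor expansion with the error term $O(\eps^2)$ being dominated by the positive linear term). An alternative, fully elementary route that avoids even mentioning derivatives would be to expand $f(\eps)-ab(1-a)(1-b)$ directly as a polynomial in $\eps$ and observe that its lowest-order term is $\bigl[(1-2a)b(1-b)-(1-2b)a(1-a)\bigr]\eps$, with a strictly positive coefficient by the computation above, so it dominates the higher-order terms for small $\eps$; I would present whichever of these is shorter.
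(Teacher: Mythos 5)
Your proposal is correct and follows essentially the same approach as the paper: both treat the difference as a polynomial in $\eps$, compute the linear coefficient $f'(0)$, and conclude from its strict positivity. The only cosmetic difference is in verifying $f'(0)>0$ — the paper factors it as $(b-a)\bigl((1-a)(1-b)+ab\bigr)$, while you use the equivalent ratio argument $\tfrac{b(1-b)}{a(1-a)}>1>\tfrac{1-2b}{1-2a}$; both are clean and valid.
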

\begin{proof}
Let $f(t)$ be the polynomial $(a+t)(b-t)(1-a-t)(1-b+t)-ab(1-a)(1-b)$. Then $f(0)=0$, and the linear term of $f(t)$, which is equal to $f'(0)$, is \[b(1-a)(1-b)-a(1-a)(1-b)-ab(1-b)+ab(1-a)=(b-a)((1-a)(1-b)+ab)>0.\]
Therefore, for every sufficiently small $\eps>0$, $f(\eps)>0$, as required.
\end{proof}

%\begin{lemma}\label{upperbound}
%If $(x_1,\ldots,x_r)\in\mathcal{X}_{r,k}$ and $I\geq k \geq\ceil{r/e}$, then $\prod_{i=1}^r x_i\leq r!/r^r$.
%\end{lemma}
We can now prove Theorem \ref{thm:main}.
\begin{proof}[Proof of Theorem \ref{thm:main}]
Recall that we may assume $k=\ceil{r/e}\leq\floor{r/2}-1$, as the case when $k=\floor{r/2}$ is proved by Theorem 7.1 in \cite{CY}. It is easy to check that if $x_i=i/r$ for every $i\in[r]$, then $(x_1,\ldots,x_r)\in\mathcal{X}_{r,k}$ and $\prod_{i=1}^r=r!/r^r$, so the maximum is at least $r!/r^r$. 

Now suppose that $(x_1,\ldots,x_r)\in\mathcal{X}_{r,k}$ maximises $\prod_{i=1}^rx_i$, by Lemma \ref{I<=k}, we may assume that $I\geq k$. Since $x_1+x_i\leq x_{i+1}$ for all $i\in[r-1]$, we have $rx_1\leq x_r=1$ and thus $x_1\leq1/r$. Let $x_1=(1-\eps)/r$ for some $\eps\geq 0$. It follows from $I\geq k$ that $[1,k]$ is uniform, so $x_i=ix_1=i(1-\eps)/r$ for all $i\in[k]$. Moreover, for every $k+1\leq i\leq r$, we have $(r-i)x_1+x_i\leq x_r=1$, so $x_i\leq 1-(r-i)x_1 =(i+(r-i)\eps)/r$.

To prove $\prod_{i=1}^rx_i\leq r!/r^r=\prod_{i=1}^r\frac ir$, it suffices to show
\[\log\left(\frac{\prod_{i=1}^rx_i}{\prod_{i=1}^r\frac ir}\right) \leq \log\left(\prod_{i=1}^k(1-\eps)\prod_{i=k+1}^r\left(1+\frac{(r-i)\eps}{i}\right)\right)\leq 0.\]
Indeed, using $\log(1+t)\leq t$ for all $t\in\mathbb{R}$, we have
\begin{align*}
\log\left(\prod_{i=1}^k(1-\eps)\prod_{i=k+1}^r\left(1+\frac{(r-i)\eps}{i}\right)\right)&=\sum_{i=1}^k\log(1-\eps)+\sum_{i=k+1}^r\log\left(1+\frac{(r-i)\eps}{i}\right)\\
&\leq-k\eps+\sum_{i=k+1}^r\frac{(r-i)\eps}{i}\leq\eps\left(r\int_{\frac kr}^1\frac{1-t}t\text{d}t-k\right)\\
&=\eps\left(r\left(-1-\log\left(\frac kr\right)+\frac kr\right)-k\right)\\
&=\eps r(\log r-\log k-1)\leq 0,
\end{align*}
where the last inequality follows from $r\leq ke$. This proves the upper bound and shows the maximum is exactly $r!/r^r$. In fact, we have $r<ke$ since $e$ is irrational, so the last inequality is above is strict unless $\eps=0$. In that case, from the proof above we have $x_i\leq i/r$ for all $i\in[r]$, so the maximum value $r!/r^r$ is achieved if and only if $x_i=i/r$ for all $i\in[r]$.
\end{proof}

Finally, we show that the choice of $\ceil{r/e}$ in Theorem~\ref{thm:main} is essentially best possible, so the choice of $\ceil{r/e}$ in Theorem~\ref{thm:maindensity} cannot be improved using this entropy method.
\begin{lemma}\label{counterexample}
For every $r\geq2$, if $1\leq k<\floor{r/e}$, then $\prod_{i=1}^rx_i>r!/r^r$ for some $(x_1,\ldots,x_r)\in\mathcal{X}_{r,k}$.
\end{lemma}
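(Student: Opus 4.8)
The plan is to exhibit an explicit tuple $(x_1,\ldots,x_r)\in\mathcal{X}_{r,k}$ whose product strictly exceeds $r!/r^r$, mirroring the ``$\eps$-perturbation of the uniform tuple'' that appeared in the second half of the proof of Theorem~\ref{thm:main}. Concretely, set $x_1 = (1-\eps)/r$ for a small parameter $\eps>0$, keep the first $k$ entries arithmetic, i.e. $x_i = i(1-\eps)/r$ for $i\in[k]$, and then push the remaining entries as high as the constraints allow. A natural and clean choice is to take $x_i = \bigl(i + (r-i)\eps\bigr)/r$ for $k+1\le i\le r$, so that $x_r = 1$ and $x_j + x_{r-j} = 1$ for $j\in[k]$; one should double-check monotonicity $x_k \le x_{k+1}$, which holds for small $\eps$ since the ``jump'' there is of size $\ge (1-k\eps)/r > 0$. (If this particular profile turns out not to satisfy every inequality in $\mathcal{X}_{r,k}$, one can instead take $x_i=i/r$ for $i>k$ and only shrink the first block, i.e. $x_i = i(1-\eps)/r$ for $i\le k$ and $x_i=i/r$ otherwise; this is even easier to verify as a member of $\mathcal{X}_{r,k}$, at the cost of a slightly weaker but still sufficient gain in the product.)

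Next I would verify membership in $\mathcal{X}_{r,k}$. The ordering $0<x_1\le\cdots\le x_r=1$ is immediate for small $\eps$. For the superadditivity constraints $x_i + x_j \le x_{i+j}$ with $i\in[k]$, $i\le j\le r-i$: when both $i,j\le k$ this is an equality in the arithmetic block; when $i\le k < j$ and $i+j\le r$, one computes $x_{i+j} - x_i - x_j$ directly from the formulas above and checks it is $\ge 0$ — for the profile $x_i=(i+(r-i)\eps)/r$ one gets $x_{i+j}-x_i-x_j = \bigl(-i + \eps(\cdots)\bigr)/r$ type expressions that need the case $i+j\le r$ together with $i\le k$; here the cleaner fallback profile makes this step trivial since then $x_{i+j}=(i+j)/r \ge x_i + x_j = i(1-\eps)/r + j/r$ whenever $i(1-\eps)+j \le i+j$, which always holds. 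Either way this is a routine but slightly fiddly finite check, linear in $\eps$.

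Finally I would estimate the product. Writing $\prod_{i=1}^r x_i \big/ \prod_{i=1}^r (i/r)$ and taking logarithms, the expression is exactly the quantity analysed at the end of the proof of Theorem~\ref{thm:main}: it equals $\sum_{i=1}^k \log(1-\eps) + \sum_{i=k+1}^r \log\!\bigl(1 + (r-i)\eps/i\bigr)$ (for the fallback profile the second sum is absent and one instead compares against a construction that also shrinks — so the first profile is the one to use here). Its derivative at $\eps=0$ is $-k + \sum_{i=k+1}^r (r-i)/i = -k + r\sum_{i=k+1}^r 1/i - (r-k)$. Approximating $\sum_{i=k+1}^r 1/i$ by $\log(r/k)$ (with the standard integral comparison, taking care of the $\pm 1/i$ error terms, which are $O(1)$ and swamped), this derivative is asymptotically $r(\log(r/k) - 1) + O(1)$, which is strictly positive once $k < r/e$ — indeed once $k \le \lceil r/e\rceil - 1 < r/e$, so in particular for $k<\lfloor r/e\rfloor$, for all sufficiently large $r$; the finitely many small $r$ can be checked by hand. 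Hence for small $\eps>0$ the log-ratio is positive, i.e. $\prod_{i=1}^r x_i > r!/r^r$.

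The main obstacle is the tension between making the tuple ``large'' (to beat $r!/r^r$) and keeping it inside $\mathcal{X}_{r,k}$: the aggressive profile $x_i = (i+(r-i)\eps)/r$ gives the strongest product bound but requires checking all the superadditivity inequalities with one index in $[k]$, and one must be careful that for $i$ near $k$ and $j$ near $k$ these are not violated — the safe resolution is that equality holds throughout the arithmetic block $[0,k]$ and the constraints with $j>k$ only get slacker as $\eps$ grows from the equality case, because $x_{i+j}$ is pinned near $(i+j)/r$ from below by the $(r-(i+j))x_1 + x_{i+j}\le 1$ constraint used with equality. Once that is pinned down, the logarithmic estimate is essentially identical to the one already carried out for the upper bound, just with the inequality $r\le ke$ reversed, so no genuinely new analytic work is needed.
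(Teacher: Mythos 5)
Your construction is exactly the one the paper uses: $x_i=i(1-\eps)/r$ for $i\le k$ and $x_i=(i+(r-i)\eps)/r$ for $i>k$, and the membership check and derivative computation proceed the same way. Two soft spots, though. First, the ``fallback profile'' you offer ($x_i=i(1-\eps)/r$ for $i\le k$, $x_i=i/r$ for $i>k$) does not give a ``slightly weaker but still sufficient gain'' --- its product is $(1-\eps)^k\,r!/r^r$, strictly \emph{less} than $r!/r^r$, so it cannot serve as a counterexample at all; you later notice this and discard it, but the earlier claim is simply false. Second, and more substantively, your final estimate replaces $\sum_{i=k+1}^r 1/i$ by $\log(r/k)$ ``with standard integral comparison'' and concludes positivity of the derivative only for ``all sufficiently large $r$,'' deferring the rest to a hand check. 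This is a gap: the naive comparison goes the wrong way, since $\sum_{i=k+1}^r 1/i < \int_k^r dt/t = \log(r/k)$, so $\log(r/k)$ is an \emph{upper} bound and cannot by itself certify that the derivative is positive. The paper instead uses the one-sided Riemann lower bound
\[
\sum_{i=k+1}^{r-1}\frac{1-i/r}{i/r}\cdot\frac1r \;\ge\; \int_{(k+1)/r}^{1}\frac{1-t}{t}\,dt,
\]
(valid because left endpoints overestimate a decreasing integrand), which yields $f'(0)\ge 1 + r\bigl(\log r - \log(k+1) - 1\bigr) \ge 1 > 0$ whenever $r\ge (k+1)e$, i.e.\ whenever $k+1\le\lfloor r/e\rfloor$. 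That version handles every admissible $(r,k)$ uniformly with no asymptotic caveat. You should replace $\log(r/k)$ by a genuine lower bound of this form; once you do, no finite case-check is needed and the argument matches the paper's.
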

\begin{proof}
Let $\eps>0$ be sufficiently small. Define $(x_1,\ldots,x_r)$ by
\[x_i=\begin{cases}
\frac ir-\frac{i\eps}r, &\text{if }i\in[k],\\
\frac ir+\frac{(r-i)\eps}r, &\text{if }k+1\leq i\leq r.
\end{cases}\]
It is clear that for all sufficiently small $\eps$, we have $0<x_1\leq\cdots\leq x_r=1$. For all $1\leq i<j\leq k$, $x_i+x_j=(1-\eps)(i+j)/r\leq x_{i+j}$. For all $1\leq i\leq k<j\leq r$ that satisfy $i+j\leq r$, we have $x_i+x_j=(i+j+(r-i-j)\eps)/r=x_{i+j}$. Hence, $(x_1,\ldots,x_r)\in\mathcal{X}_{r,k}$.

To prove $\prod_{i=1}^rx_i>r!/r^r=\prod_{i=1}^r\frac ir$, it suffices to show
\[\frac{\prod_{i=1}^rx_i}{\prod_{i=1}^r\frac ir}=\prod_{i=1}^k(1-\eps)\prod_{i=k+1}^r\left(1+\frac{(r-i)\eps}{i}\right)>1.\]
Define \[f(t)=\prod_{i=1}^k(1-t)\prod_{i=k+1}^r\left(1+\frac{(r-i)t}{i}\right).\]
Then $f(t)$ is a polynomial in $t$ with $f(0)=1$. The linear term of $f(t)$, which is equal to $f'(0)$, is
\begin{align*}
-k+\sum_{i=k+1}^r\frac{(r-i)}{i}&=-k+r\sum_{i=k+1}^{r-1}\frac{1-\frac ir}{\frac ir}\cdot\frac1r\geq-k+r\int_{\frac{k+1}r}^1\frac{1-t}t\text{d}t\\
&=-k+r\left(-1-\log\left(\frac{k+1}r\right)+\frac{k+1}r\right)\\
&=1+r(\log r-\log(k+1)-1)>0,
\end{align*}
as $r\geq(k+1)e$. Hence, for sufficiently small $\eps>0$, $f(\eps)>f(0)=1$, which finishes the proof.
\end{proof}

\section{Proof of Theorem \ref{thm:mainturan}}\label{sec:turan}
In this section, we prove Theorem \ref{thm:mainturan}. This is done by combining Theorem \ref{thm:maindensity} and the machinery developed in \cite{HLZ24}, \cite{L} and \cite{LMR23unif}. We proceed similarly as in \cite{L} where Liu obtained Theorem \ref{thm:Lmain} from Theorem \ref{thm:CYmain}. We begin by introducing the necessary concepts.

\begin{definition}
Let $H$ be an $r$-uniform hypergraph. Let $\mathcal{F}$ be a family of $r$-uniform hypergraphs. Let $\mathfrak{H}$ be a family of $\mathcal{F}$-free $r$-uniform hypergraphs.  
\begin{itemize}
    \item $H$ is \textit{2-covered} if every pair of vertices is contained in at least one edge of $H$.
    \item $H$ is \textit{symmetrised} if $H$ is the blowup of a 2-covered $r$-uniform hypergraph.
    \item $\mathcal{F}$ is \textit{blowup-invariant} if every blowup of an $\mathcal{F}$-free $r$-uniform hypergraph $F$ is also $\mathcal{F}$-free.
    \item $\mathcal{F}$ is \textit{hereditary} if for every $F\in\cF$, every subgraph $F'$ of $F$ is in $\cF$ as well.
    \item $\mathcal{F}$ is \textit{symmetrised-stable} with respect to $\mathfrak{H}$ if every symmetrised $\mathcal{F}$-free $r$-uniform hypergraph is contained in $\mathfrak{H}$.
\end{itemize}
Now assume additionally that $\pi(\mathcal{F})>0$. 
\begin{itemize}
    \item $\mathcal{F}$ is \textit{edge-stable} with respect to $\mathfrak{H}$ if for every $\delta>0$ there exist $\varepsilon>0$ and $N$ such that every $\mathcal{F}$-free $r$-uniform hypergraph $H$ on $n \ge N$ vertices with $|E(H)| \ge \left(\pi(\cF)/r! - \varepsilon\right)n^r$ can be turned into a member in $\mathfrak{H}$ by removing at most $\delta n^r$ edges. 
    \item $\mathcal{F}$ is \textit{degree-stable} with respect to $\mathfrak{H}$ if there exist $\varepsilon>0$ and $N$ such that every $\mathcal{F}$-free $r$-uniform hypergraph $H$ on $n \ge N$ vertices with $\delta(H) \ge \left(\pi(\cF)/(r-1)! - \varepsilon\right)n^{r-1}$ is in $\mathfrak{H}$. 
    \item $\mathcal{F}$ is \textit{vertex-extendable} with respect to $\mathfrak{H}$ if there exist $\varepsilon>0$ and $N$ such that for every $\mathcal{F}$-free $r$-uniform hypergraph $H$ on $n \ge N$ vertices with $\delta(H) \ge \left(\pi(\cF)/(r-1)! - \varepsilon\right)n^{r-1}$, if $H-v$ is in $\mathfrak{H}$, then $H$ is in $\mathfrak{H}$ as well. 
\end{itemize}
\end{definition}

These notions were introduced in~\cite{LMR23unif} to provide a unified framework for proving the degree-stability of certain classes of hypergraph families. This method was later refined in~\cite{HLZ24}. Their main stability results are the following. 
\begin{theorem}[{\cite[Theorem~1.7]{LMR23unif}} and {\cite[Theorem~1.1]{HLZ24}}]\label{THM:LMR-vtx-extend}
    Suppose that $\mathcal{F}$ is a family of $r$-uniform hypergraphs with $\pi(\mathcal{F})>0$, and $\mathfrak{H}$ is a hereditary family of $\mathcal{F}$-free $r$-uniform hypergraphs.
    Then the following statements hold.
    \begin{enumerate}[label=(\roman*)]
        \item\label{THM:LMR-vtx-extend-1} If $\mathcal{F}$ is blowup-invariant, and both symmetrised-stable and vertex-extendable with respect to $\mathfrak{H}$, then $\mathcal{F}$ is degree-stable with respect to $\mathfrak{H}$.
        \item\label{THM:LMR-vtx-extend-2} If $\mathcal{F}$ is both edge-stable and vertex-extendable with respect to $\mathfrak{H}$, then $\mathcal{F}$ is degree-stable with respect to $\mathfrak{H}$.
    \end{enumerate}
\end{theorem}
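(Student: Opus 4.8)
The plan is to prove the two implications in the reverse of the order stated: first establish Part~\ref{THM:LMR-vtx-extend-2}, that edge-stability together with vertex-extendability yields degree-stability, and then reduce Part~\ref{THM:LMR-vtx-extend-1} to it by showing that under its hypotheses one can upgrade blowup-invariance and symmetrised-stability to edge-stability with respect to $\mathfrak{H}$.

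\textbf{Part~\ref{THM:LMR-vtx-extend-2}.} Let $H$ be an $\mathcal{F}$-free $r$-uniform hypergraph on $n$ vertices with $\delta(H) \ge (\pi(\mathcal{F})/(r-1)! - \varepsilon)\,n^{r-1}$, with $\varepsilon$ small and $n$ large. Averaging the degrees gives $|E(H)| \ge (\pi(\mathcal{F})/r! - \varepsilon')\,n^{r}$, so edge-stability, invoked with a parameter $\delta \ll \varepsilon$, produces $H_0 \in \mathfrak{H}$ obtainable from $H$ by adding and deleting at most $\delta n^{r}$ edges. I would then isolate a vertex set $W$ of size $o(n)$ such that $H - W$ already lies in $\mathfrak{H}$; this is the delicate combinatorial core of \cite{LMR23unif} (refined in \cite{HLZ24}) — it uses the rigidity of the structures in $\mathfrak{H}$ and the fact that $\mathfrak{H}$ is hereditary, and crucially it cannot be done by naively deleting every vertex incident to an edge of the symmetric difference. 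With $W$ in hand, re-insert its vertices one at a time: at each step the current hypergraph is $\mathcal{F}$-free, is obtained from a member of $\mathfrak{H}$ by adding a single vertex, and — provided the bookkeeping is set up correctly — still meets the minimum-degree hypothesis of vertex-extendability, so each insertion keeps us inside $\mathfrak{H}$; after $|W|$ insertions we conclude $H \in \mathfrak{H}$.

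\textbf{Part~\ref{THM:LMR-vtx-extend-1}.} It suffices to show that blowup-invariance and symmetrised-stability together imply edge-stability with respect to $\mathfrak{H}$, after which Part~\ref{THM:LMR-vtx-extend-2} applies verbatim. For a near-extremal $\mathcal{F}$-free $H$ on $n$ vertices, a stability argument — combining Zykov-type symmetrisation, which is legitimate here because blowup-invariance keeps every cloning step $\mathcal{F}$-free (the edges lying inside a cloned pair being dealt with by passing to the $2$-covered reduction), with the hypergraph regularity/removal method — shows that $H$ is within $\delta n^{r}$ edges of a blowup of some bounded-size $2$-covered $\mathcal{F}$-free hypergraph. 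By symmetrised-stability that blowup lies in $\mathfrak{H}$, so $H$ is $\delta n^{r}$-close to a member of $\mathfrak{H}$, which is exactly edge-stability.

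\textbf{Main obstacle.} The real work is in Part~\ref{THM:LMR-vtx-extend-2}, in reconciling the choice of the exceptional set $W$ with the preservation of the minimum-degree condition along the chain of re-insertions: one needs every intermediate hypergraph $H[V(H) \setminus W']$ with $W' \subseteq W$ to retain minimum degree at least $(\pi(\mathcal{F})/(r-1)! - \varepsilon)\,|V(H) \setminus W'|^{r-1}$, and one simultaneously needs $H - W$ to land in $\mathfrak{H}$ itself rather than merely close to it. Making these compatible forces a carefully tuned hierarchy $\delta \ll \varepsilon$ and a non-obvious construction of $W$ — it is exactly this step that the refinement of \cite{HLZ24} streamlines, and the only part of the argument that goes genuinely beyond symmetrisation and degree-averaging manipulations.
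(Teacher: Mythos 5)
The paper does not prove this theorem: it is imported verbatim from {\cite[Theorem~1.7]{LMR23unif}} (for part~\ref{THM:LMR-vtx-extend-1}) and {\cite[Theorem~1.1]{HLZ24}} (for part~\ref{THM:LMR-vtx-extend-2}), so there is no proof in this paper against which your attempt can be checked. What can be said is the following. Your sketch correctly reproduces the high-level architecture of the arguments in those two references --- for~\ref{THM:LMR-vtx-extend-2}, remove a small exceptional set $W$ of vertices so that the remainder lies in $\mathfrak{H}$, then re-insert the vertices of $W$ one by one via vertex-extendability; for~\ref{THM:LMR-vtx-extend-1}, upgrade blowup-invariance and symmetrised-stability to edge-stability via symmetrisation and then invoke~\ref{THM:LMR-vtx-extend-2}.

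However, as you yourself flag, the real content is entirely delegated to black boxes. In part~\ref{THM:LMR-vtx-extend-2}, the crucial step --- producing a set $W$ with $|W| = o(n)$ for which $H - W$ lies in $\mathfrak{H}$ \emph{exactly} (not just within $\delta n^r$ edges of it), and showing that every intermediate hypergraph along the re-insertion chain retains the minimum-degree bound needed to invoke vertex-extendability --- is asserted to exist but not constructed; these two requirements pull against each other, and resolving the tension is precisely what makes \cite{LMR23unif} and \cite{HLZ24} nontrivial. In part~\ref{THM:LMR-vtx-extend-1}, the claim that Zykov symmetrisation keeps the graph $\mathcal{F}$-free because of blowup-invariance also glosses over the fact that blowup-invariance constrains blowups of $2$-covered graphs, while intermediate symmetrisation steps need not be of that form, and the appeal to the removal method to close the gap is not spelled out. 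You also slightly misstate the definition of edge-stability: as given in the paper, it permits \emph{removing} up to $\delta n^r$ edges, not ``adding and deleting,'' though this happens to be harmless. In short, this is an accurate roadmap to the literature rather than a proof, and your own ``main obstacle'' paragraph is a fair self-assessment of what is missing.
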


Since $\cF_{r,k}$ is not blowup-invariant, for part of the proof we need to consider the following broader family of triangle-like $r$-uniform hypergraphs. Denote by $\mathcal{T}_{r, k}$ the collection of $r$-uniform hypergraphs consisting of three edges $A, B, C$ such that 
\begin{align*}
    A \subset B \cup C,
    \quad\text{and}\quad 
    (B\cap C) \setminus A \neq \emptyset
    \quad\text{and}\quad 
    |A\cap B| \geq r - k.
\end{align*}
Note that indeed $\mathcal{F}_{r, k}\subseteq \mathcal{T}_{r, k}$, so $\pi(\mathcal{T}_{r, k})\leq\pi(\mathcal{F}_{r, k})=r!/r^r$ by Theorem \ref{thm:maindensity}. But $\pi(\mathcal{T}_{r,k})\geq r!/r^r$ because it is at least the blowup density of a single edge, so $\pi(\mathcal{T}_{r,k})=r!/r^r$. Moreover, one can verify that $\mathcal{T}_{r, k}$ is blowup-invariant. To prove Theorem~\ref{thm:mainturan} we need following three propositions.
%Moreover, an $r$-graph $\mathcal{H}$ is $\mathcal{T}_{r, k}$-free iff there is no homomorphism from any member of $\mathcal{F}_{r, k}$ to $\mathcal{H}$. (necessary?)

\begin{proposition}\label{PROP:min-deg-r-partite}
    For every $r\geq 4$, let $k=\ceil{r/e}$, then there exist $\varepsilon>0$ and $N$ such that the following holds for all $n \ge N$. Suppose that $H$ is the subgraph of a symmetrised $\mathcal{T}_{r,k}$-free $r$-uniform hypergraph on $n$ vertices with $\delta(H) \ge n^{r-1}/r^{r-1} - \varepsilon n^{r-1}$. Then $H$ is $r$-partite. 
\end{proposition}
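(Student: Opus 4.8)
Here is the plan I would follow.

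\medskip

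\noindent\textbf{Setup and the Lagrangian bound.} Let $G$ be a symmetrised $\mathcal{T}_{r,k}$-free $r$-uniform hypergraph with $H\subseteq G$, and write $G$ as the blow-up of a $2$-covered $r$-graph $G_0$ on vertex set $[m]$ with non-empty blow-up classes $U_1,\dots,U_m$; after a harmless reduction we may assume $\bigcup_i U_i=V(H)=[n]$. Since $\mathcal{T}_{r,k}$ is blow-up-invariant and $G_0$ is $\mathcal{T}_{r,k}$-free, every blow-up of $G_0$ is $\mathcal{T}_{r,k}$-free, so $b(G_0)\le\pi(\mathcal{T}_{r,k})=r!/r^r$; equivalently $L(G_0)\le 1/r^r$. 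This is where Theorem~\ref{thm:maindensity} (and hence the value $k=\lceil r/e\rceil$) enters.

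\medskip

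\noindent\textbf{From minimum degree to metric information.} Put $x_i=|U_i|/n$, so $\sum_i x_i=1$, and set $P(\mathbf{x})=\sum_{e_0\in E(G_0)}\prod_{j\in e_0}x_j\le L(G_0)\le 1/r^r$. For $v\in U_i$ one has $\deg_G(v)=n^{r-1}\sum_{e_0\ni i}\prod_{j\in e_0\setminus i}x_j$, hence $x_i\deg_G(v)=n^{r-1}\sum_{e_0\ni i}\prod_{j\in e_0}x_j\le n^{r-1}P(\mathbf{x})\le n^{r-1}/r^r$. Combining with $\delta(H)\le\deg_H(v)\le\deg_G(v)$ and the hypothesis $\delta(H)\ge(1/r^{r-1}-\varepsilon)n^{r-1}$ forces $x_i\le \frac{1}{r(1-\varepsilon r^{r-1})}$ for every $i$, i.e.\ $x_i\le(1+\delta)/r$ with $\delta=\delta(\varepsilon)\to 0$. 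Summing $\deg_H(v)$ over all $v$ and using $|E(H)|\le|E(G)|=n^r P(\mathbf{x})$ gives $P(\mathbf{x})\ge 1/r^r-\varepsilon/r$. So $\mathbf{x}$ is an almost-balanced, almost-optimal weighting of $G_0$, and in particular $L(G_0)\ge 1/r^r-\varepsilon/r$.

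\medskip

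\noindent\textbf{The structural core.} The heart of the proof is the claim that a $2$-covered $\mathcal{T}_{r,k}$-free $r$-graph whose Lagrangian is within $\varepsilon/r$ of $1/r^r$ has, after discarding classes of weight $o(1)$, exactly $r$ relevant vertices spanning a single edge, and the blow-up over those classes already contains all edges of $H$; then $G$ restricted to the heavy classes is complete $r$-partite and $H\subseteq G$ is $r$-partite. I would assemble this in three steps. (a) There is \emph{no} $2$-covered $\mathcal{T}_{r,k}$-free $r$-graph on $r+1$ vertices: such a hypergraph needs at least three edges, two of which must meet in exactly $r-1$ vertices, and a short case analysis shows any third edge covering the remaining pair completes a member of $\mathcal{T}_{r,k}$ (this uses only $r-k\le r-1$, i.e.\ $k\ge 1$). (b) Pass to a minimal-support optimal weighting $\mathbf{x}^{*}$ of $G_0$; a standard weight-shifting argument shows its support $S$ induces a $2$-covered subhypergraph of $G_0$, so by (a) (and the analogous statement for more than $r+1$ vertices) $|S|\le r$, hence $|S|=r$ and $G_0[S]=K_r^{(r)}$, since $L(K_r^{(r)})=1/r^r$ is attained only there. (c) Upgrade from $\mathbf{x}^{*}$ to the actual near-optimal $\mathbf{x}$: near-optimal weightings of a fixed $G_0$ must concentrate their mass near an optimal support, and the degree hypothesis then rules out any further non-negligible class, and any edge of $H$ outside the induced $r$-partition, because such an edge would push $P(\mathbf{x})$ strictly above $1/r^r$.

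\medskip

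\noindent\textbf{Main obstacle.} The difficulty is entirely in (b)–(c): a priori $G_0$ can have arbitrarily many vertices, so one cannot just inspect a bounded configuration. Two points need care. First, $2$-coveredness is not hereditary, so either the $(r+1)$-vertex obstruction in (a) must be promoted to all vertex counts $\ge r+1$, or — the route I would take — one argues that the minimal optimal support of a $2$-covered hypergraph again induces a $2$-covered hypergraph, which does reduce matters to (a). Second, the passage from the maximiser $\mathbf{x}^{*}$ to the sampled weighting $\mathbf{x}$ is a genuine Lagrangian-stability statement for $\mathcal{T}_{r,k}$-free hypergraphs; alternatively one can try to work with $\mathbf{x}$ directly, applying the $\mathcal{T}_{r,k}$-freeness constraints to pairs of overlapping edges inside $\supp(\mathbf{x})$ to exclude any heavy class beyond a fixed $K_r^{(r)}$. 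Making ``almost optimal'' into the exact conclusion ``$H$ is $r$-partite'', via compactness and the finiteness of bounded configurations, is where essentially all the work lies.
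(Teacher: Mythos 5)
Your plan follows the same broad blueprint as Liu's framework that the paper invokes: pass to the $2$-covered base $G_0$ of the symmetrised graph, use blowup-invariance of $\mathcal{T}_{r,k}$ together with the density result to bound the Lagrangian by $1/r^r$, extract near-balance of the blowup classes from the minimum-degree hypothesis, and then argue structurally. The key difference is in \emph{which} uniqueness statement drives the structural core. The paper's proof routes through Theorem~\ref{thm:main} --- the fact that $(1/r,2/r,\ldots,1)$ is the \emph{unique} maximiser of $\prod_i x_i$ over $\mathcal{X}_{r,k}$ --- and plugs this into the entropy/ratio-sequence machinery of Liu's Propositions~3.1 and~5.1. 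Your plan never touches ratio sequences or the optimisation problem over $\mathcal{X}_{r,k}$; you instead work directly with the Lagrangian of $G_0$ and the uniqueness of $K_r^{(r)}$ as the extremiser. This is a genuinely different (and in principle viable) route, but it is not what the paper does.

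The two gaps you flag are real and are exactly where the work lives. First, your step~(a) only rules out $2$-covered $\mathcal{T}_{r,k}$-free $r$-graphs on \emph{exactly} $r+1$ vertices; since $2$-coveredness is not hereditary, this does not automatically extend to $\geq r+2$ vertices, and the reduction to (a) via the Frankl--R\"odl minimal-support lemma gives a $2$-covered induced subgraph on some set $S$ with $|S|$ a priori of any size, so you still need the statement for arbitrary $|S|\geq r+1$. Second, your step~(c) --- upgrading from ``the minimal-support optimal weighting of $G_0$ lives on an $r$-set spanning a single edge'' to ``$H$ itself is $r$-partite'' --- is not merely a compactness argument: you must show that the degree hypothesis forces every vertex of $H$ into one of the $r$ heavy classes and that no edge of $H$ uses any light class, and this is precisely the content of the stability step (Liu's Proposition~5.1) that the paper imports wholesale. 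As it stands, your proposal is a plausible outline of an alternative Lagrangian-only argument, but both nontrivial ingredients are left unproved, whereas the paper discharges them by citing and adapting \cite{L} with the uniqueness input from Theorem~\ref{thm:main}.
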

\begin{proof}
    By Theorem~\ref{thm:main}, if $(x_1,\ldots,x_r)\in\mathcal{X}_{r,k}$ maximizes $\prod_{i=1}^rx_i$, then $x_i = i/r$ for $1\leq i \leq r$. We can then prove an analogous version of Proposition 3.1 in~\cite{L}, with $\mathcal{T}_{r,k}$ replacing the $\mathcal{T}_r$ there and using the same proof. Then we proceed as in Proposition 5.1 in~\cite{L} to prove an analogous version again with $\mathcal{T}_{r,k}$ replacing $\mathcal{T}_r$, which gives exactly Proposition~\ref{PROP:min-deg-r-partite}.
\end{proof}

\begin{proposition}[{\cite[Proposition~4.1]{L}}]\label{PROP:Tr-vtx-extend}
For every $r\geq 2$, $\Delta_{(r-1,1)}$ is vertex-extendable with respect to $\mathfrak{K}_{r}^{r}$, the family of all $r$-partite $r$-uniform hypergraphs.  
\end{proposition}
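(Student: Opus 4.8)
The plan is to use the minimum–degree hypothesis together with the assumption $H-v\in\mathfrak K_r^r$ to determine the global structure of $H$, and then to show that $v$ can be placed into one of the colour classes of $H-v$. Throughout write $\mathbb T_r$ for $\Delta_{(r-1,1)}$ and recall $\mathbb T_r$ is not $r$-partite, so $\pi(\mathbb T_r)\ge r!/r^r$ and the degree condition in the definition of vertex-extendability gives $\delta(H)\ge(1/r^{r-1}-\eps)n^{r-1}$ for a small $\eps>0$. Fix an $r$-colouring $\gamma$ of $V(H)\setminus\{v\}$ witnessing $H-v\in\mathfrak K_r^r$, with colour classes $V_1,\dots,V_r$. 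Deleting $v$ decreases every degree by at most $O(n^{r-2})$, so $\delta(H-v)\ge(1/r^{r-1}-2\eps)n^{r-1}$; since every edge of $H-v$ through a vertex of $V_i$ is a transversal of $\{V_j:j\ne i\}$, we get $\prod_{j\ne i}|V_j|\ge\delta(H-v)$ for every $i$, and multiplying these $r$ inequalities and comparing with $\prod_i|V_i|\le((n-1)/r)^r$ forces $|V_i|=(1\pm o_r(1))n/r$ for all $i$ together with $e(H-v)\ge(1-o_r(1))\prod_i|V_i|$, i.e.\ $H-v$ is a \emph{nearly complete} $r$-partite hypergraph. In particular every vertex $u$ satisfies $d_{H-v}(u)\ge(1-o_r(1))(n/r)^{r-1}$, and all but an $o_r(1)$-fraction of the transversals of any fixed $r-1$ of the parts span an edge of $H-v$.

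Next I would analyse the link $L_v$, an $(r-1)$-graph on $V(H)\setminus\{v\}$ with $e(L_v)=d_H(v)\ge(1-o_r(1))(n/r)^{r-1}$, and show first that every edge of $L_v$ is \emph{rainbow} (a transversal of $r-1$ of the parts). Suppose some $f\in E(L_v)$ has two vertices $a,b$ with $\gamma(a)=\gamma(b)$. Because $d_{H-v}(a)$ and $d_{H-v}(b)$ are both a $(1-o_r(1))$-fraction of $(n/r)^{r-1}$, one can choose an $(r-1)$-set $S$ using the $r-1$ colours other than $\gamma(a)$, disjoint from $f$, with $S\cup\{a\},S\cup\{b\}\in E(H-v)$. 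Then the three edges $S\cup\{a\}$, $S\cup\{b\}$, $\{v\}\cup f$ form a copy of $\mathbb T_r$ in $H$: the first two share the $(r-1)$-set $S$, and $\{v\}\cup f$ contains both $a$ and $b$ while being disjoint from $S$. This contradicts $\mathbb T_r$-freeness, so every edge of $L_v$ is rainbow; write $L_v^{(i)}$ for the set of edges of $L_v$ missing colour $i$, so that $E(L_v)=\bigsqcup_i L_v^{(i)}$.

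It then remains to show that exactly one $L_v^{(i)}$ is non-empty, for then extending $\gamma$ by $\gamma(v):=i$ shows $H\in\mathfrak K_r^r$. Since $\sum_i|L_v^{(i)}|=e(L_v)\ge(1-o_r(1))(n/r)^{r-1}$ while each $|L_v^{(i)}|\le(1+o_r(1))(n/r)^{r-1}$, some class $L_v^{(i)}$ has size at least $\tfrac1r(1-o_r(1))(n/r)^{r-1}$; call a transversal $S$ missing $i$ \emph{good} if a $(1-o_r(1))$-fraction of $V_i$ extends it to an edge of $H-v$, and note that by the previous paragraph all but an $o_r(1)$-fraction of transversals missing $i$ are good, so $L_v^{(i)}$ contains many good transversals. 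Now suppose for contradiction $L_v^{(j)}\ne\varnothing$ for some $j\ne i$, fix $T\in L_v^{(j)}$ and let $x:=T\cap V_i$; among the many good $S\in L_v^{(i)}$ one can choose one disjoint from $T$ (only $O((n/r)^{r-2})$ good transversals meet $T$) with $S\cup\{x\}\in E(H-v)$ (since $x$ has degree a $(1-o_r(1))$-fraction of $(n/r)^{r-1}$, all but an $o_r(1)$-fraction of transversals missing $i$ extend to an edge through $x$). Then $\{v\}\cup S$, $S\cup\{x\}$, $\{v\}\cup T$ form a copy of $\mathbb T_r$ (the first two share $S$, and $\{v\}\cup T$ contains $v$ and $x$ and is disjoint from $S$), a contradiction, which completes the proof.

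I expect the only real work to be the bookkeeping in the last two paragraphs needed to realise the claimed copies of $\mathbb T_r$: choosing the auxiliary $(r-1)$-set $S$ (or the edge $T$) so that all $2r-1$ vertices are distinct, lie in the prescribed colour classes, and avoid both the constantly many already-used vertices and the $o_r(1)$-fraction of ``atypical'' transversals of $H-v$. This is routine once the first paragraph has established that $H-v$ is nearly complete $r$-partite; the one point requiring care is guaranteeing in the third paragraph that the largest class $L_v^{(i)}$ is big enough to contain good transversals that avoid a fixed edge, which is precisely where the lower bound $e(L_v)\ge(1-o_r(1))(n/r)^{r-1}$ coming from the minimum-degree hypothesis is used.
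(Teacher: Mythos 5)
This proposition is cited from Liu's paper \cite{L} and is not proved in the present paper, so there is no in-paper proof to compare against; I can only assess your argument on its own terms, and it is correct. The strategy — use the degree hypothesis to force the parts of $H-v$ to be nearly balanced and $H-v$ to be a nearly complete $r$-partite hypergraph, then show that every edge of the link $L_v$ is a rainbow transversal, and finally that all those transversals omit the same colour class — is the natural way to prove vertex-extendability for $\Delta_{(r-1,1)}$ and is surely in the same spirit as Liu's proof. The two triples of edges you produce really are copies of $\Delta_{(r-1,1)}$: in each, one pair of edges meets in exactly $r-1$ vertices, the other two pairs meet in single (and distinct) vertices, and all $2r-1$ vertices are distinct by your disjointness conditions.

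Two small cleanups worth making. First, the ``good transversal'' notion in the last step is not actually used: you verify $S\cup\{x\}\in E(H-v)$ directly from the high degree of $x$ rather than from goodness of $S$, so that definition can be dropped, and the accompanying pointer ``by the previous paragraph'' should in any case reference the near-completeness of $H-v$ established in the \emph{first} paragraph rather than the rainbow argument in the second. Second, the various $o_r(1)$'s must be traced back to explicit functions of the free parameter $\varepsilon$ in the minimum-degree hypothesis and of $1/n$, with $\varepsilon$ then chosen small enough in terms of $r$; this matters in the pigeonhole step, where you need the largest class $L_v^{(i)}$ of size roughly $\tfrac1r(n/r)^{r-1}$ to survive the subtraction of the $O_r(\varepsilon)(n/r)^{r-1}$ atypical transversals. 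You flag this as routine, and indeed it is, but it should appear if the argument is written out in full.
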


\begin{proposition}\label{prop:deg-stable}
For every $r\geq 4$, let $k=\ceil{r/e}$, then $\mathcal{F}_{r,k}$ is degree-stable with respect to $\mathfrak{K}_{r}^{r}$.
\end{proposition}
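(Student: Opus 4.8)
The plan is to verify the hypotheses of Theorem~\ref{THM:LMR-vtx-extend}, following Liu's deduction of Theorem~\ref{thm:Lmain} from Theorem~\ref{thm:CYmain} in~\cite{L}. Since $\mathcal{F}_{r,k}$ is not blowup-invariant, part~\ref{THM:LMR-vtx-extend-1} of that theorem does not apply to $\mathcal{F}_{r,k}$ directly, so the target is part~\ref{THM:LMR-vtx-extend-2}: it suffices to show that $\mathcal{F}_{r,k}$ is both edge-stable and vertex-extendable with respect to $\mathfrak{K}_{r}^{r}$. Throughout we use that $\pi(\mathcal{F}_{r,k})=r!/r^r$ (Theorem~\ref{thm:maindensity}), so the relevant minimum-degree threshold is $\pi(\mathcal{F}_{r,k})/(r-1)!=1/r^{r-1}$.

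Vertex-extendability is the easy half. As $k\ge 1$, we have $\Delta_{(r-1,1)}\in\mathcal{F}_{r,k}$, hence every $\mathcal{F}_{r,k}$-free hypergraph is in particular $\{\Delta_{(r-1,1)}\}$-free. Since the threshold $1/r^{r-1}$ we need is exactly the one at which Proposition~\ref{PROP:Tr-vtx-extend} guarantees vertex-extendability of $\Delta_{(r-1,1)}$ with respect to $\mathfrak{K}_{r}^{r}$, the same constants witness vertex-extendability of $\mathcal{F}_{r,k}$ with respect to $\mathfrak{K}_{r}^{r}$. The same argument applies to $\mathcal{T}_{r,k}$, which we will need below, since $\Delta_{(r-1,1)}\in\mathcal{T}_{r,k}$ as well.

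The substance is edge-stability, and here the broader family $\mathcal{T}_{r,k}$ enters to restore blowup-invariance. Fix $\delta>0$ and let $H$ be an $\mathcal{F}_{r,k}$-free hypergraph on $n$ vertices with $|E(H)|\ge(1/r^r-\varepsilon)n^r$. Following \cite{L}, one first repeatedly deletes vertices of degree below $(1/r^{r-1}-\varepsilon')n^{r-1}$; using that $|E(H)|$ is near-extremal this removes only $o(n^r)$ edges and yields an $\mathcal{F}_{r,k}$-free subgraph $H'$ on $(1-o(1))n$ vertices with $\delta(H')\ge(1/r^{r-1}-\varepsilon')|V(H')|^{r-1}$. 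The heart of the matter, carried out exactly as the corresponding part of \cite{L} with $\mathcal{T}_{r,k}$ replacing Liu's $\mathcal{T}_r$, is to show that such a high-minimum-degree $\mathcal{F}_{r,k}$-free hypergraph lies in $\mathfrak{K}_{r}^{r}$: intuitively the large minimum degree first forces $H'$ to be $\mathcal{T}_{r,k}$-free, as any degenerate copy of a member of $\mathcal{T}_{r,k}$ can be expanded into an honest tent $\Delta_{(r-i,i)}$ with $i\le k$, and then — using that $\mathcal{T}_{r,k}$ is blowup-invariant and vertex-extendable with respect to $\mathfrak{K}_{r}^{r}$ — the symmetrisation machinery of \cite{LMR23unif} together with Proposition~\ref{PROP:min-deg-r-partite} forces $H'$ to be $r$-partite. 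Adding back the deleted vertices (and extending the $r$-partition arbitrarily to them), $H$ differs from an $r$-partite hypergraph by the $o(n^r)$ edges removed during cleaning; choosing $\varepsilon,\varepsilon'$ small enough in terms of $\delta$ gives edge-stability. Theorem~\ref{THM:LMR-vtx-extend}\ref{THM:LMR-vtx-extend-2} then yields the proposition.

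The main obstacle is the edge-stability step above, and within it the two facts that (a) a near-extremal, high-minimum-degree $\mathcal{F}_{r,k}$-free hypergraph contains no member of $\mathcal{T}_{r,k}\setminus\mathcal{F}_{r,k}$, and (b) the symmetrisation required to apply Proposition~\ref{PROP:min-deg-r-partite} stays within the $\mathcal{T}_{r,k}$-free hypergraphs. Both are established in \cite{L} for $k=\floor{r/2}$; the only new ingredient needed to run the same arguments for $k=\ceil{r/e}$ is the conclusion of Theorem~\ref{thm:main} — that the unique optimiser of the associated optimisation problem is $x_i=i/r$ — which is precisely what enters the proof of Proposition~\ref{PROP:min-deg-r-partite}. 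Proposition~\ref{PROP:Tr-vtx-extend} is used as a black box.
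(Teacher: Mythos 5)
Your overall architecture is right: you correctly identify that part~\ref{THM:LMR-vtx-extend-2} is the one to aim for (not part~\ref{THM:LMR-vtx-extend-1}, since $\mathcal{F}_{r,k}$ is not blowup-invariant), you correctly get vertex-extendability from $\Delta_{(r-1,1)}\in\mathcal{F}_{r,k}$ and Proposition~\ref{PROP:Tr-vtx-extend}, and you correctly see that the blowup-invariant enlargement $\mathcal{T}_{r,k}$ has to enter somewhere. But the edge-stability step is where the proposal has a genuine gap.

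The paper does \emph{not} attempt to show directly that a high-minimum-degree $\mathcal{F}_{r,k}$-free hypergraph is $\mathcal{T}_{r,k}$-free, nor that it is $r$-partite --- your claim that ``any degenerate copy of a member of $\mathcal{T}_{r,k}$ can be expanded into an honest tent'' using the minimum degree is not established and is in fact the crux of what would need proving. Worse, if you could show that a high-minimum-degree $\mathcal{F}_{r,k}$-free hypergraph is $r$-partite, you would have already proved the degree-stability you are after, and the whole detour through edge-stability would be redundant --- so the structure as written is circular. The paper's route is different: it first applies Theorem~\ref{THM:LMR-vtx-extend}\ref{THM:LMR-vtx-extend-1} to the blowup-invariant family $\mathcal{T}_{r,k}$ (taking $\mathfrak{H}$ to be the hereditary closure of blowups of $2$-covered $\mathcal{T}_{r,k}$-free hypergraphs, so symmetrised-stability is automatic, and deducing vertex-extendability of $\mathcal{T}_{r,k}$ w.r.t.\ $\mathfrak{H}$ from Propositions~\ref{PROP:min-deg-r-partite} and~\ref{PROP:Tr-vtx-extend}); this gives degree-stability and hence edge-stability of $\mathcal{T}_{r,k}$ w.r.t.\ $\mathfrak{K}_r^r$. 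The passage from $\mathcal{T}_{r,k}$ to $\mathcal{F}_{r,k}$ happens purely at the \emph{edge-stability} level via the Hypergraph Removal Lemma: since each $F\in\mathcal{T}_{r,k}$ admits a homomorphism from some $F'\in\mathcal{F}_{r,k}$, an $\mathcal{F}_{r,k}$-free hypergraph can be made $\mathcal{T}_{r,k}$-free by deleting $o(n^r)$ edges. That is the step you are missing, and it is what avoids the circularity. Once $\mathcal{F}_{r,k}$ is known to be edge-stable, your appeal to Theorem~\ref{THM:LMR-vtx-extend}\ref{THM:LMR-vtx-extend-2} together with vertex-extendability is exactly as in the paper.
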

\begin{proof}
Let $\mathfrak{S}$ denote the collection of all $2$-covered $\mathcal{T}_{r, k}$-free $r$-uniform hypergraphs. Let $\mathfrak{H}$ be the collection of every $r$-uniform hypergraph $H$ that is the subgraph of a blowup of some $G\in\mathfrak S$. In particular, $\mathfrak H$ contains every blowup of every $G\in\mathfrak S$, as well as the collection $\mathfrak{K}_{r}^{r}$ of all $r$-partite $r$-uniform hypergraphs.

We first prove that $\mathcal{T}_{r, k}$ is degree-stable with respect to $\mathfrak{H}$. It follows from the definition that $\mathfrak{H}$ is hereditary and $\mathcal{T}_{r, k}$ is symmetrised-stable with respect to $\mathfrak H$. Also, $\pi(\mathcal{T}_{r, k})=r!/r^r>0$ and $\mathcal{T}_{r, k}$ is blowup-invariant, so by Theorem~\ref{THM:LMR-vtx-extend}~\ref{THM:LMR-vtx-extend-1}, it suffices to show that $\mathcal{T}_{r, k}$ is vertex-extendable with respect to $\mathfrak{H}$. 

Let $\varepsilon > 0$ be a sufficiently small constant and let $n$ be a sufficiently large integer. Let $H$ be a $\mathcal{T}_{r, k}$-free $r$-uniform hypergraph on $n$ vertices with $\delta(H) \ge n^{r-1}/r^{r-1} - \varepsilon n^{r-1}$. Suppose that $v^\ast \in V(H)$ is a vertex such that $H - v^\ast$ is in $\mathfrak{H}$. Then, as
\[\delta(H-v^\ast)\ge \delta(H) - \binom{n-2}{r-2}\ge n^{r-1}/r^{r-1} - 2 \varepsilon n^{r-1},\]
we have that $H-v^\ast\in \mathfrak{K}_{r}^{r}$ by Proposition~\ref{PROP:min-deg-r-partite}. By Proposition~\ref{PROP:Tr-vtx-extend}, $\Delta_{(r-1,1)} \in \mathcal{T}_{r, k}$ is vertex-extendable with respect to $\mathfrak{K}_{r}^{r}$, so $\mathcal{T}_{r, k}$ is as well. Therefore, by definition, $H$ is in $\mathfrak{K}_{r}^{r} \subseteq \mathfrak{H}$, which proves that $\mathcal{T}_{r, k}$ is vertex-extendable with respect to $\mathfrak{H}$, and thus degree-stable with respect to $\mathfrak{H}$ by Theorem~\ref{THM:LMR-vtx-extend}~\ref{THM:LMR-vtx-extend-1}.

By Proposition~\ref{PROP:min-deg-r-partite} again, $\mathcal{T}_{r, k}$ being degree-stable with respect to $\mathfrak{H}$ implies that it is also degree-stable with respect to $\mathfrak{K}_{r}^{r}$. This then implies that $\mathcal{T}_{r,k}$ is edge-stable with respect to $\mathfrak{K}_{r}^{r}$ by removing vertices of low degrees (see e.g. \cite[Fact 2.5]{LMR23unif}). Since for every $F \in \mathcal{T}_{r, k}$, there exists $F' \in \mathcal{F}_{r,k}$ such that there is a homomorphism from $F'$ to $F$, a standard application of the Hypergraph Removal Lemma (see e.g.~\cite{Gow07,NRS06,RS04}) shows that every $\mathcal{F}_{r,k}$-free $r$-uniform hypergraph on $n$ vertices can be made $\mathcal{T}_{r,k}$-free by removing $o(n^r)$ edges. This implies that $\mathcal{F}_{r,k}$ is edge-stable with respect to $\mathfrak{K}_{r}^{r}$ as well. By Proposition~\ref{PROP:Tr-vtx-extend} again, $\Delta_{(r-1,1)}$ and therefore $\mathcal{F}_{r,k}$ is vertex-extendable with respect to $\mathfrak{K}_{r}^{r}$, so it follows from Theorem~\ref{THM:LMR-vtx-extend}~\ref{THM:LMR-vtx-extend-2} that $\mathcal{F}_{r,k}$ is degree-stable with respect to $\mathfrak{K}_{r}^{r}$.
\end{proof}

Now we can bring everything together and prove Theorem \ref{thm:mainturan}.
\begin{proof}[Proof of Theorem \ref{thm:mainturan}]
For simplicity, let $|E(T^r(n))|=e_n$ for every $n\in\mathbb{Z}^+$. By Proposition \ref{prop:deg-stable}, there exists $\eps, N$ such that every $\mathcal{F}_{r,k}$-free $r$-uniform hypergraph $H$ on $n\geq N$ vertices with $\delta(H)\geq n^{r-1}/r^{r-1}-\eps n^{r-1}$ is $r$-partite. %In particular, $|E(H)|\leq|E(T^r(n))|=e_n$, with equality if and only if $H=T^r(n)$.

Now let $N_0\gg N$ satisfy $\ex(n,\mathcal{F}_{r,k})\leq(1+\eps)n^r/r^r$ for all $n\geq N_0$, and let $H$ be an $\mathcal{F}_{r,k}$-free $r$-uniform hypergraph on $n\geq 2N_0$ vertices with $|E(H)|\geq|E(T^r(n))|$. Define a sequence of subgraphs $H=H_0\supset H_1\supset\cdots$ with $|V(H_i)|=n_i=n-i$ for all $i\geq 0$ by repeatedly deleting a single vertex (as long as it exists) from $H_i$ with degree at most $e_{n_i}-e_{n_{i+1}}-\eps n_i^{r-1}/2$ to obtain $H_{i+1}$. This process must terminate after at most $n/10$ steps as otherwise for $i=\ceil{n/10}$, $n_i\geq N_0$ and \[|E(H_i)|\geq e_{n_i}+i\eps n_i^{r-1}/2\geq n_i^r/r^r+\eps n_i^r/20>(1+\eps)n_i^r/r^r,\]
contradicting $\ex(n_i,\mathcal{F}_{r,k})\leq(1+\eps)n_i^r/r^r$. Suppose now that this process terminated after $t$ steps for some $t\leq n/10$, then $H_t$ satisfies $|E(H_t)|\geq e_{n_t}+t\eps n_t^{r-1}/2$, $n_t\geq N$ and \[\delta(H_t)\geq e_{n_t}-e_{n_t-1}-\eps n_t^{r-1}/2\geq\floor{n_t/r}^{r-1}-\eps n_t^{r-1}/2\geq n_t^{r-1}/r^{r-1}-\eps n_t^{r-1}.\] Thus, $H_t$ is $r$-partite from degree-stability, so $|E(H_t)|\leq|E(T^r(n_t))|=e_{n_t}$, which gives a contradiction unless $t=0$. In that case $H_0=H$ is $r$-partite and so $|E(H)|\leq|E(T^r(n))|$, with equality if and only if $H=T^r(n)$.
\end{proof}

\section{Concluding Remarks}\label{sec:conclusion}
We begin by posing two follow up questions. By Lemma~\ref{counterexample}, it is not possible to use this entropy method to show $\pi(\mathcal F_{r,k})=r!/r^r$ for $k<\floor{r/e}$. For $k=\floor{r/e}$, careful examination of our proof shows that whether this method works depends on how close $r$ is to an integer multiple of $e$. The natural question then is whether $\pi(\mathcal F_{r,k})>r!/r^r$ if $k<\floor{r/e}$, so that our result obtained from the entropy method is essentially tight.

\begin{question}\label{ques}
    Is $\pi(\mathcal F_{r,k})>r!/r^r$ if $k<\floor{r/e}$? If not, what is the largest $k$ such that this holds?
\end{question}

Also, in this paper we considered only subfamilies of $\mathcal{F}_{r,\floor{r/2}}$ of the form $\mathcal F_{r,k}$, but one could consider more general subfamilies of $\mathcal{F}_{r,\floor{r/2}}$. Therefore, it would interesting to know the following.

\begin{question}
    What are the subfamilies $\mathcal F \subset \mathcal{F}_{r,\floor{r/2}}$ that satisfy $\pi(\cF) = r!/r^r$?
\end{question}

Finally, we mention two related problems, for which our results have some implications. 
\subsection{Generalised tents}
More generally, for any partition $\lambda=(\lambda_1,\ldots,\lambda_m)\vdash r$ with $r>\lambda_1\geq\cdots\geq\lambda_m$, the \textit{$\lambda$-tent} $\Delta_\lambda$ is defined to be the unique $r$-uniform hypergraph up to isomorphism with $m+1$ edges $e_0,e_1,\ldots,e_m$, such that
\begin{itemize}
    \item $|e_0\cap e_i|=\lambda_i$ for every $i\in[m]$,
    \item $e_0\cap e_1,\ldots,e_0\cap e_m$ partition $e_0$,
    \item there is a vertex $v$ such that $e_i\cap e_j=\{v\}$ for every $1\leq i<j\leq m$.
\end{itemize}
Note that when $m=2$, this coincides with our earlier definition of the $(r-i,i)$-tent $\Delta_{(r-i,i)}$. 

In \cite{MP07}, Mubayi and Pikhurko showed that $\pi(\Delta_{(1,1,\ldots,1)})=r!/r^r$, and for large $n$, $\ex(n,\Delta_{(1,1,\ldots,1)})=|E(T^r(n))|$, with the $T^r(n)$ being the unique extremal graph. Since there is a homomorphism from $\Delta_{(1,1,\ldots,1)}$ to $F$ for every $F\in\mathcal{F}_{r,\floor{r/2}}$, we have $\pi(\Delta_{(1,1,\ldots,1)})\leq\pi(\mathcal{F}_{r,\floor{r/2}})$ by Corollary~\ref{lemma:densityineq}, so Theorem \ref{thm:CYmain} is stronger than Mubayi and Pikhurko's result. Moreover, as there is a homomorphism from $\Delta_{(\ceil{r/2},1,\ldots,1)}$ to $F$ for every $F\in\mathcal{F}_{r,\floor{r/2}}$, Theorem \ref{thm:CYmain} also implies that $\pi(\Delta_{(\ceil{r/2},1,\ldots,1)})=r!/r^r$. Using the same argument, we obtain the following improvement as a corollary to Theorem \ref{thm:maindensity}. 
\begin{corollary}\label{cor:maindensity}
For every $r\geq4$, let $k=\ceil{r/e}$, then $\pi(\Delta_{(r-k,1,\ldots,1)})=r!/r^r$.
\end{corollary}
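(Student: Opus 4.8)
The plan is to deduce Corollary~\ref{cor:maindensity} from Theorem~\ref{thm:maindensity} in exactly the way the surrounding text hints at, namely via Corollary~\ref{lemma:densityineq}. First I would exhibit a homomorphism from some member of $\mathcal{F}_{r,k}$ into $\Delta_{(r-k,1,\ldots,1)}$. The natural candidate on the domain side is the tent $\Delta_{(r-k,k)}$, which lies in $\mathcal{F}_{r,k}$ since $k=\ceil{r/e}\leq\floor{r/2}$ for $r\geq4$. I would then contract the $k$-element block of $\Delta_{(r-k,k)}$ down appropriately: the generalised tent $\Delta_{(r-k,1,\ldots,1)}$ has a block structure where $e_0$ is split into one part of size $r-k$ and $k$ singleton parts, all meeting pairwise at a common apex vertex $v$; mapping the large block $e_0\cap e_1$ of $\Delta_{(r-k,k)}$ identically onto the size-$(r-k)$ part, sending the $k$ vertices of $e_0\cap e_2$ bijectively onto the $k$ singleton parts, and sending the apex of $\Delta_{(r-k,k)}$ to $v$ gives a map that carries each of the three edges of $\Delta_{(r-k,k)}$ into an edge of $\Delta_{(r-k,1,\ldots,1)}$ (the two "side'' edges $e_1,e_2$ of $\Delta_{(r-k,k)}$ land inside edges of $\Delta_{(r-k,1,\ldots,1)}$, which is all a homomorphism requires). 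Routine bookkeeping confirms edges map to edges.

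Once this homomorphism is in hand, applying Corollary~\ref{lemma:densityineq} with $\mathcal{F}=\mathcal{F}_{r,k}$ and $\mathcal{F}'=\{\Delta_{(r-k,1,\ldots,1)}\}$ yields $\pi(\mathcal{F}_{r,k})\leq\pi(\Delta_{(r-k,1,\ldots,1)})$. Combining with Theorem~\ref{thm:maindensity} gives $\pi(\Delta_{(r-k,1,\ldots,1)})\geq r!/r^r$. For the matching upper bound, $\Delta_{(r-k,1,\ldots,1)}$ is visibly not $r$-partite — it has $m+1\geq 3$ edges with two of them, $e_1$ and $e_2$ say, whose union contains $e_0$ while $e_1\cap e_2\neq\emptyset$, so any $r$-colouring forced to be rainbow on each edge collapses, a contradiction — hence by the general fact quoted in the introduction ($\pi(\mathcal{F})\leq r!/r^r$ whenever some member of $\mathcal{F}$ is $r$-partite fails, i.e. here the single forbidden hypergraph is non-$r$-partite) we would need a containment going the other direction. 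More cleanly: there is also a homomorphism from $\Delta_{(1,1,\ldots,1)}$ into $\Delta_{(r-k,1,\ldots,1)}$ (merge vertices of the $r-k$ singleton parts into one block), so Corollary~\ref{lemma:densityineq} gives $\pi(\Delta_{(1,1,\ldots,1)})\leq\pi(\Delta_{(r-k,1,\ldots,1)})$, but for the upper bound I instead go the other way and note $\Delta_{(r-k,1,\ldots,1)}$ maps homomorphically onto... — actually the clean route for the upper bound is Mubayi--Pikhurko plus a homomorphism from $\Delta_{(1,\ldots,1)}$ to $\Delta_{(r-k,1,\ldots,1)}$ giving $\pi(\Delta_{(r-k,1,\ldots,1)})\le\pi(\Delta_{(1,\ldots,1)})=r!/r^r$. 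So the upper bound follows from Corollary~\ref{lemma:densityineq} applied with $\mathcal{F}=\{\Delta_{(1,1,\ldots,1)}\}$, $\mathcal{F}'=\{\Delta_{(r-k,1,\ldots,1)}\}$ once one checks $\Delta_{(1,1,\ldots,1)}$ maps into $\Delta_{(r-k,1,\ldots,1)}$.

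The only genuinely non-automatic step, and the one I would be most careful about, is verifying the two homomorphisms at the level of the precise edge sets of the generalised tents — in particular that collapsing a block of a tent really does send every edge into an edge rather than merely into a vertex subset of the right size, and that the apex/intersection conditions in the definition of $\Delta_\lambda$ are respected by the maps. Everything else is a direct invocation of Corollary~\ref{lemma:densityineq}, Theorem~\ref{thm:maindensity}, and the Mubayi--Pikhurko result $\pi(\Delta_{(1,1,\ldots,1)})=r!/r^r$ already recalled in this section, so the proof should be only a few lines once the homomorphisms are spelled out; this mirrors exactly the argument the paper already gives for the $k=\floor{r/2}$ case in the paragraph preceding the corollary.
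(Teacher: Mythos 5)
There are two genuine flaws here, and both are in the logic rather than the bookkeeping. First, the homomorphism you propose from $\Delta_{(r-k,k)}$ into $\Delta_{(r-k,1,\ldots,1)}$ does not exist when $k\geq 2$ (which is always the case for $r\geq 4$, $k=\ceil{r/e}$): if the $k$-block of $\Delta_{(r-k,k)}$ is sent bijectively onto the $k$ singleton blocks $\{b_2\},\ldots,\{b_{k+1}\}$ of $\Delta_{(r-k,1,\ldots,1)}$, then the image of the side edge $f_2$ of $\Delta_{(r-k,k)}$ with $|f_0\cap f_2|=k$ contains all of $b_2,\ldots,b_{k+1}$ together with the image of the apex, and no edge of $\Delta_{(r-k,1,\ldots,1)}$ contains all of these, since the only edge containing more than one $b_j$ is $e_0$, which misses the apex. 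In fact a short check with the intersection sizes (any two distinct edges of $\Delta_{(r-k,1,\ldots,1)}$ meet in at most $r-k$ vertices, whereas $\Delta_{(r-i,i)}$ has an edge pair meeting in $r-i>r-k$ for $i<k$) shows that \emph{no} member of $\mathcal{F}_{r,k}$ maps homomorphically into $\Delta_{(r-k,1,\ldots,1)}$ at all for $k\geq 2$. This is harmless, however, because the inequality $\pi(\mathcal{F}_{r,k})\leq\pi(\Delta_{(r-k,1,\ldots,1)})$ you were aiming for is only the lower bound $\pi(\Delta_{(r-k,1,\ldots,1)})\geq r!/r^r$, and that already follows from the fact that $\Delta_{(r-k,1,\ldots,1)}$ is not $r$-partite via the general fact in the introduction. (You misquote that fact: non-$r$-partiteness gives a \emph{lower} bound on $\pi$, not an upper bound.)

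The serious gap is the upper bound, where you invert Corollary~\ref{lemma:densityineq}. A homomorphism from $\Delta_{(1,\ldots,1)}$ \emph{into} $\Delta_{(r-k,1,\ldots,1)}$ yields $\pi(\Delta_{(1,\ldots,1)})\leq\pi(\Delta_{(r-k,1,\ldots,1)})$ — yet another lower bound — not the upper bound $\pi(\Delta_{(r-k,1,\ldots,1)})\leq\pi(\Delta_{(1,\ldots,1)})$ you assert; the direction of the corollary is fixed, with density increasing along the target of the homomorphism. So as written your proposal never proves $\pi(\Delta_{(r-k,1,\ldots,1)})\leq r!/r^r$, which is the part that actually needs Theorem~\ref{thm:maindensity}. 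The paper's argument runs the homomorphism the other way: for every $1\leq i\leq k$ there is a homomorphism \emph{from} $\Delta_{(r-k,1,\ldots,1)}$ \emph{to} $\Delta_{(r-i,i)}$, obtained by sending the $(r-k)$-block of $\Delta_{(r-k,1,\ldots,1)}$ into the $(r-i)$-block of $\Delta_{(r-i,i)}$, the apex to the apex, distributing the $k$ singleton blocks over the $k-i$ leftover vertices of the $(r-i)$-block together with the $i$ vertices of the $i$-block, and routing each small edge $e_j$ into whichever of the two side edges of $\Delta_{(r-i,i)}$ contains the image of $b_j$. Applying Corollary~\ref{lemma:densityineq} with $\mathcal{F}=\{\Delta_{(r-k,1,\ldots,1)}\}$ and $\mathcal{F}'=\mathcal{F}_{r,k}$ then gives $\pi(\Delta_{(r-k,1,\ldots,1)})\leq\pi(\mathcal{F}_{r,k})=r!/r^r$ by Theorem~\ref{thm:maindensity}, completing the proof.
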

\begin{proof}
This follows immediately from Theorem \ref{thm:maindensity} and Corollary~\ref{lemma:densityineq}, using the observation that there is a homomorphism from $\Delta_{(r-k,1,\ldots,1)}$ to $F$ for every $F\in\mathcal{F}_{r,k}$.
\end{proof}

\subsection{$L$-intersecting hypergraphs}
Let $L\subset\{0,1,\ldots,r-1\}$. An $r$-uniform hypergraph $H$ is \textit{$L$-intersecting} if for any distinct $e,e'\in E(H)$, $|e\cap e'|\in L$. Determining the maximum blowup density of an $L$-intersecting $r$-uniform hypergraph is an interesting open problem, and is related to Tur\'an number of family of tents because of the following simple lemma.
\begin{lemma}\label{lemma:Lintertent}
Let $\mathcal{F}$ be a family of tents satisfying that $\lambda\vdash r$ and $r>\lambda_1\geq k$ for all $\Delta_\lambda\in\mathcal{F}$. Let $L\subset\{0,1,\ldots,k-1\}$. Then for every $L$-intersecting $r$-uniform hypergraph $H$, $|E(H)|\leq\ex(|H|,\mathcal{F})$ and $b(H)\leq\pi(\mathcal{F})$.
\end{lemma}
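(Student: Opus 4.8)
The plan is to show that an $L$-intersecting hypergraph $H$ cannot contain any $\Delta_\lambda \in \mathcal{F}$ as a subgraph, so that the bounds $|E(H)| \le \ex(|H|,\mathcal{F})$ and $b(H) \le \pi(\mathcal{F})$ follow immediately from the definitions of the Tur\'an number and (via Lemma~\ref{lemma:homdensity}, since $H$ is trivially $\mathcal{F}$-hom-free once it is $\mathcal{F}$-free — actually here one only needs $\mathcal F$-freeness for the first bound; for $b(H)\le\pi(\mathcal F)$ note an $L$-intersecting hypergraph stays $\mathcal F$-free under blowups only if $L$ is closed appropriately, so it is cleaner to argue directly that no blowup of $H$ contains a tent either). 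Let me keep it simple and prove $H$ is $\mathcal{F}$-free; the blowup density bound then needs the extra observation that a blowup of an $L$-intersecting hypergraph need not be $L$-intersecting, so I will instead argue that the relevant tents have no homomorphic image in $H$.

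First I would fix $\Delta_\lambda \in \mathcal{F}$ with edges $e_0, e_1, \ldots, e_m$ as in the definition, where $\lambda = (\lambda_1,\ldots,\lambda_m) \vdash r$, $r > \lambda_1 \ge k$, the sets $e_0 \cap e_i$ partition $e_0$, and there is a vertex $v$ with $e_i \cap e_j = \{v\}$ for all $1 \le i < j \le m$. Suppose for contradiction that there is a homomorphism $f$ from $\Delta_\lambda$ to $H$ (this covers containment as a subgraph too). Since $f$ is injective on each edge, $f(e_0), f(e_1), \ldots, f(e_m)$ are edges of $H$. I would first check these are pairwise distinct: if $f(e_0) = f(e_i)$ then, $f$ being injective on $e_0$ and on $e_i$ while $|e_0| = |e_i| = r$ forces $f(e_0 \cap e_i)$ to be a proper subset of size $\lambda_i < r$ of a set of size $r$, a contradiction; and if $f(e_i) = f(e_j)$ for $i \ne j \ge 1$, then since $e_i \cap e_j = \{v\}$ and $e_i \setminus \{v\}$, $e_j \setminus\{v\}$ are disjoint nonempty sets, injectivity of $f$ on $e_i$ forces $f$ to collapse these, again impossible since $r \ge 2$.

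Next, the key step: I would look at $|f(e_0) \cap f(e_1)|$. Since $f$ is injective on $e_0$, it is injective on $e_0 \cap e_1$, so $f(e_0 \cap e_1) \subseteq f(e_0) \cap f(e_1)$ has size exactly $\lambda_1 \ge k$. Because $f(e_0)$ and $f(e_1)$ are distinct edges of the $L$-intersecting hypergraph $H$, we must have $|f(e_0) \cap f(e_1)| \in L \subseteq \{0, 1, \ldots, k-1\}$, so $|f(e_0) \cap f(e_1)| \le k - 1 < k \le \lambda_1$, contradicting $|f(e_0) \cap f(e_1)| \ge \lambda_1$. Hence no such homomorphism exists, so $H$ is $\mathcal{F}$-free and in fact $\mathcal{F}$-hom-free. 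The bound $|E(H)| \le \ex(|H|, \mathcal{F})$ is then immediate from the definition of the Tur\'an number, and $b(H) \le \sup\{b(H') \mid H' \text{ is } \mathcal{F}\text{-hom-free}\} = \pi(\mathcal{F})$ by Lemma~\ref{lemma:homdensity}.

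The only subtlety — and the one place I would be careful — is the second inequality: $b(H) \le \pi(\mathcal{F})$ requires $\mathcal{F}$-hom-freeness of $H$, not merely $\mathcal{F}$-freeness, since a blowup of an $L$-intersecting hypergraph need not itself be $L$-intersecting (distinct vertices in the same blown-up part yield edges with intersection $r$). This is precisely why the argument above is phrased in terms of homomorphisms rather than subgraph containment: the contradiction I derived rules out any homomorphism from $\Delta_\lambda$ into $H$, which is exactly what Lemma~\ref{lemma:homdensity} needs. I expect no real obstacle here beyond making sure the homomorphism (as opposed to subgraph-embedding) bookkeeping in the distinctness-of-edges step is done correctly, which is the routine verification sketched above.
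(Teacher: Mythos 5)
Your overall strategy -- prove that $H$ is $\mathcal{F}$-hom-free and then invoke Lemma~\ref{lemma:homdensity} (noting that hom-freeness implies freeness, giving the Tur\'an number bound for free) -- is the right one and is the same as the paper's. But there is a genuine gap in the execution. Your argument rests on first establishing that $f(e_0), f(e_1), \ldots, f(e_m)$ are pairwise distinct edges of $H$, and the justification you give does not work. When $f(e_0)=f(e_i)$, injectivity of $f$ on $e_0$ and on $e_i$ separately only tells you that $f(e_0\cap e_i)$ is a subset of $f(e_0)$ of size $\lambda_i<r$; that is not a contradiction, since $f(e_0\cap e_i)\subseteq f(e_0)\cap f(e_i)$ is in general a strict inclusion when $f$ is not injective on $e_0\cup e_i$. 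In fact, the situation is worse than merely unproved: for an $L$-intersecting $H$ with $L\subset\{0,\dots,k-1\}$, the very intersection bound you compute (namely $|f(e_0)\cap f(e_1)|\geq |f(e_0\cap e_1)|=\lambda_1\geq k$) shows that $f(e_0)$ and $f(e_1)$ \emph{cannot} be distinct edges, so $f(e_0)=f(e_1)$ is forced. The case you claim is impossible is exactly the case that actually occurs.

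The paper's proof embraces this: having deduced $f(e_0)=f(e_1)$, it observes that since $v\in e_1$, the vertex $f(v)\in f(e_1)=f(e_0)$ equals $f(u)$ for some $u\in e_0$. By the partition property, $u$ lies in $e_0\cap e_i$ for some $i\in[m]$, and $v$ also lies in $e_i$; moreover $u\neq v$ because $v\notin e_0$. This contradicts injectivity of $f$ on $e_i$. So the missing ingredient in your argument is precisely the use of the common apex $v$ of the tent to turn $f(e_0)=f(e_1)$ into a violation of injectivity on one of the side edges. Your proof as written would need this additional case, after which it matches the paper's.
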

\begin{proof}
Since $H$ is $L$-intersecting and every $\Delta_\lambda\in\mathcal{F}$ contains two edges whose intersection has size at least $k$, $H$ is $\mathcal{F}$-free, so $|E(H)|\leq\ex(|H|,\mathcal{F})$. 

Now let $\Delta_\lambda\in\mathcal{F}$ be a tent with edges $e_0,\ldots,e_m$ and $\cap_{i=1}^me_i=\{v\}$ as in the definition, and suppose there is a homomorphism $f:\Delta_\lambda\to H$. Then, $|e_0\cap e_1|=\lambda_1\geq k$ and $H$ is $L$-intersecting implies that $f(e_0)=f(e_1)$, so in particular, $f(v)=f(u)$ for some $u\in e_0$. But both $u$ and $v$ are in some edge $e_i$ for some $i\in[m]$, so $f$ is not injective on $e_i$, a contradiction. Hence, $H$ is $\mathcal{F}$-hom-free, and the result follows from Lemma \ref{lemma:homdensity}.
\end{proof}
Using this argument, Liu \cite{L} observed that Theorem \ref{thm:CYmain} implies that if $L\subset\{0,1,\ldots,\ceil{r/2}-1\}$ and $H$ is $r$-uniform and $L$-intersecting, then $b(H)\leq r!/r^r$. We can improve this as follows.
\begin{corollary}\label{cor:Lintersect}
For every $r\geq4$, let $k=\ceil{r/e}$, let $L\subset\{0,1,\ldots,r-k-1\}$. If $H$ is an $L$-intersecting $r$-uniform hypergraph, then $b(H)\leq r!/r^r$.
\end{corollary}
\begin{proof}
This is the immediate consequence of Theorem \ref{thm:maindensity} and Lemma \ref{lemma:Lintertent}.
\end{proof}

In the other direction, Lemma \ref{lemma:Lintertent} also provides a possible approach to Question \ref{ques}. For a certain $\alpha\approx0.89$, $L=\{0,1,\ldots,\floor{\alpha r}\}$ and $r$ large, Chao and Yu constructed in \cite{CY} an $L$-intersecting $r$-uniform hypergraph $H$ on $2r$ vertices with $b(H)>r!/r^r$. By Lemma \ref{lemma:Lintertent}, this implies that for all sufficiently large $r$ and any family $\mathcal{F}$ of tents satisfying $\lambda\vdash r$ and $r>\lambda_1>\alpha r$ for all $\Delta_\lambda\in\mathcal F$, we have $\pi(\mathcal{F})\geq b(H)>r!/r^r$. In particular, the answer to the second part of Question \ref{ques} is asymptotically at least $(1-\alpha)r$. It is an open question to determine the smallest $\alpha$ for which a construction like Chao and Yu's exists. Note that Liu's observation above implies that this is at least 1/2, and Corollary \ref{cor:Lintersect} improves this lower bound to $1-1/e$.

\begin{comment}
For $\alpha\approx0.89$, $L=\{0,1,\ldots,\floor{\alpha r}\}$ and $r$ large, Chao and Yu constructed in \cite{CY} an $L$-intersecting $r$-uniform hypergraph $H$ on $2r$ vertices with $b(H)>r!/r^r$. Consequently, there is a constant $\alpha$ such that for all sufficiently large $r$ and any partition $\lambda\vdash r$ with $r>\lambda_1>\alpha r$, we have $\pi(\Delta_\lambda)>r!/r^r$. It is an open question to determine the smallest $\alpha$ for which this is true, though they noted that it is at least 1/2 as Theorem \ref{thm:CYmain} implies that $\pi(\Delta_{(\ceil{r/2},1,\ldots,1)})=r!/r^r$. Observe that Corollary \ref{cor:maindensity} improves this lower bound to $1-1/e$.

Using the same argument, Liu \cite{L} observed that Theorem \ref{thm:CYmain} implies that if $L\subset\{0,1,\ldots,\ceil{r/2}-1\}$ and $H$ is an $r$-uniform and $L$-intersecting, then $b(H)\leq r!/r^r$. We can improve this as follows.
\begin{corollary}\label{cor:Lintersect}
For every $r\geq4$, let $k=\ceil{r/e}$, let $L\subset\{0,1,\ldots,r-k-1\}$. If $H$ is an $L$-intersecting $r$-uniform hypergraph, then $b(H)\leq r!/r^r$.
\end{corollary}
\begin{proof}
This is the immediate consequence of Theorem \ref{thm:maindensity} and Lemma \ref{lemma:Lintertent}.
\end{proof}
\end{comment}

\section*{Acknowledgement}
We would like to thank Xizhi Liu for telling us about \cite{CY} and \cite{L}, and for related discussions.

\bibliographystyle{plain}
\bibliography{bibliography}

\end{document}